\numberwithin{equation}{section}
\begin{document}

\title{Boundedness of $\theta$-type Calder\'on--Zygmund operators and commutators in the generalized weighted Morrey spaces}
\author{Hua Wang \footnote{E-mail address: wanghua@pku.edu.cn.}\\
\footnotesize{College of Mathematics and Econometrics, Hunan University, Changsha 410082, P. R. China}}
\date{}
\maketitle

\begin{abstract}
In this paper, we first introduce some new Morrey type spaces containing generalized Morrey space and weighted Morrey space as special cases. Then we discuss the strong type and weak type estimates for a class of Calder\'on--Zygmund type operators $T_\theta$ in these new Morrey type spaces. Furthermore, the strong type estimate and endpoint estimate of commutators $[b,T_{\theta}]$ formed by $b$ and $T_{\theta}$ are established. Also we study related problems about two-weight, weak type inequalities for $T_{\theta}$ and $[b,T_{\theta}]$ in the Morrey type spaces and give partial results.\\
MSC(2010): 42B20; 42B25; 42B35\\
Keywords: $\theta$-type Calder\'on--Zygmund operators; Morrey type spaces; commutators; $BMO(\mathbb R^n)$; $A_p$ weights
\end{abstract}

\section{Introduction}

Calder\'on--Zygmund singular integral operators and their generalizations on the Euclidean space $\mathbb R^n$ have been extensively studied (see \cite{duoand,garcia,stein2,yabuta} for instance). In particular, Yabuta \cite{yabuta} introduced certain $\theta$-type Calder\'on--Zygmund operators to facilitate his study of certain classes of pseudo-differential operators. Following the terminology of Yabuta \cite{yabuta}, we introduce the so-called $\theta$-type Calder\'on--Zygmund operators.

\newtheorem{defn}{Definition}[section]

\begin{defn}
Let $\theta$ be a non-negative, non-decreasing function on $\mathbb R^+=(0,+\infty)$ with
\begin{equation}\label{theta1}
\int_0^1\frac{\theta(t)}{\,t\,}dt<\infty.
\end{equation}
A measurable function $K(\cdot,\cdot)$ on $\mathbb R^n\times\mathbb R^n\backslash\{(x,x):x\in\mathbb R^n\}$ is said to be a $\theta$-type kernel if it satisfies
\begin{align}
&(i)\quad \big|K(x,y)\big|\leq \frac{C}{|x-y|^{n}},\quad \mbox{for any }\, x\neq y;\\
&(ii)\quad \big|K(x,y)-K(z,y)\big|+\big|K(y,x)-K(y,z)\big|\leq \frac{C}{|x-y|^{n}}\cdot\theta\Big(\frac{|x-z|}{|x-y|}\Big), \\
&\qquad \mbox{for }\, |x-z|<|x-y|/2.\notag
\end{align}
\end{defn}

\begin{defn}
Let $T_\theta$ be a linear operator from $\mathscr S(\mathbb R^n)$ into its dual $\mathscr S'(\mathbb R^n)$. We say that $T_\theta$ is a $\theta$-type Calder\'on--Zygmund operator if

$(1)$ $T_\theta$ can be extended to be a bounded linear operator on $L^2(\mathbb R^n);$

$(2)$ There is a $\theta$-type kernel $K(x,y)$ such that
\begin{equation}
T_\theta f(x):=\int_{\mathbb R^n}K(x,y)f(y)\,dy
\end{equation}
for all $f\in C^\infty_0(\mathbb R^n)$ and for all $x\notin supp\,f$, where $C^\infty_0(\mathbb R^n)$ is the space consisting of all infinitely differentiable functions on $\mathbb R^n$ with compact supports.
\end{defn}
Note that the classical Calder\'on--Zygmund operator with standard kernel (see \cite{duoand,garcia}) is a special case of $\theta$-type operator $T_{\theta}$ when $\theta(t)=t^{\delta}$ with $0<\delta\leq1$.
\begin{defn}
Given a locally integrable function $b$ defined on $\mathbb R^n$, and given a $\theta$-type Calder\'on--Zygmund operator $T_{\theta}$, the linear commutator $[b,T_\theta]$ generated by $b$ and $T_{\theta}$ is defined for smooth, compactly supported functions $f$ as
\begin{equation}
\begin{split}
[b,T_\theta]f(x)&:=b(x)\cdot T_\theta f(x)-T_\theta(b\cdot f)(x)\\
&=\int_{\mathbb R^n}\big[b(x)-b(y)\big]K(x,y)f(y)\,dy.
\end{split}
\end{equation}
\end{defn}

\newtheorem{theorem}{Theorem}[section]

\newtheorem{corollary}{Corollary}[section]

\newtheorem{lemma}{Lemma}[section]

Suppose that $\theta$ is a non-negative, non-decreasing function on $\mathbb R^+=(0,+\infty)$ satisfying the condition (\ref{theta1}) (we always restrict $\theta$ satisfies this condition (\ref{theta1}) in the whole paper), we give the following weighted result of $T_{\theta}$ obtained by Quek and Yang in \cite{quek}.

\begin{theorem}[\cite{quek}]\label{strongweak}
Let $1\leq p<\infty$ and $w\in A_p$. Then the $\theta$-type Calder\'on--Zygmund operator $T_{\theta}$ is bounded on $L^p_w(\mathbb R^n)$ for $p>1$, and bounded from $L^1_w(\mathbb R^n)$ into $WL^1_w(\mathbb R^n)$ for $p=1$.
\end{theorem}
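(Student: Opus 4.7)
The plan is to follow the classical Calder\'on--Zygmund machinery, adapted to the Dini-type smoothness condition on $\theta$, and then bootstrap to the weighted setting via a sharp maximal function argument.

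First I would handle the unweighted case. The $L^2$ boundedness is given in the definition, so the main task is the weak-type $(1,1)$ estimate. I would apply the Calder\'on--Zygmund decomposition of $f$ at height $\lambda$, producing a good function $g$ (handled by the $L^2$ bound and Chebyshev) and a bad function $b=\sum_i b_i$ with $b_i$ supported on disjoint cubes $Q_i$, having mean zero, and satisfying the usual size estimates. The delicate piece is to control $\sum_i\int_{(2Q_i)^c}|T_\theta b_i(x)|\,dx$. Using the cancellation of $b_i$ and the kernel smoothness $(ii)$, one has
\begin{equation*}
\int_{(2Q_i)^c}|T_\theta b_i(x)|\,dx\leq C\int_{Q_i}|b_i(y)|\int_{(2Q_i)^c}\frac{1}{|x-y_i|^n}\,\theta\!\Big(\frac{|y-y_i|}{|x-y_i|}\Big)\,dx\,dy,
\end{equation*}
where $y_i$ is the center of $Q_i$. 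Decomposing $(2Q_i)^c$ into dyadic annuli $2^{k+1}Q_i\setminus 2^kQ_i$ reduces the inner integral to a telescoping sum $\sum_{k\geq 1}\theta(2^{-k})$, which is controlled by $\int_0^1\theta(t)/t\,dt<\infty$; this is exactly where the Dini hypothesis \eqref{theta1} is essential. Marcinkiewicz interpolation between weak $(1,1)$ and $L^2$, combined with duality and the symmetric smoothness of $K$, yields the unweighted $L^p$ bounds for all $1<p<\infty$.

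For the weighted statement, the key step is to establish the pointwise sharp maximal function inequality
\begin{equation*}
M^{\#}(T_\theta f)(x)\leq C\,M_r f(x),\qquad 1<r<\infty,
\end{equation*}
where $M^{\#}$ is the Fefferman--Stein sharp maximal operator and $M_r f=(M(|f|^r))^{1/r}$. The proof fixes a cube $Q$ containing $x$, splits $f=f_1+f_2$ with $f_1=f\chi_{2Q}$, and uses the unweighted $L^r$ bound on the local part $T_\theta f_1$ together with the smoothness $(ii)$ and the Dini condition on the far part to bound the oscillation of $T_\theta f_2$ on $Q$ by $M_r f(x)$. Given $w\in A_p$ with $p>1$, one can choose $r>1$ with $w\in A_{p/r}$, so $M_r$ is bounded on $L^p_w$, and combining with the Fefferman--Stein inequality $\|h\|_{L^p_w}\leq C\|M^{\#}h\|_{L^p_w}$ (valid for $w\in A_\infty$) gives the strong $(p,p)$ bound.

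Finally, for $p=1$ with $w\in A_1$, I would run a weighted Calder\'on--Zygmund decomposition directly: select maximal dyadic cubes with respect to the measure $w\,dx$, estimate the good part by the unweighted $L^2$ bound and the $A_1$ property $w(Q)\lesssim |Q|\,\inf_Q w$, and estimate the bad part using the same dyadic annular decomposition as in the unweighted step, now with weight $w(2^{k+1}Q_i)$; the doubling of $w$ together with $\sum_k\theta(2^{-k})<\infty$ closes the estimate. The main obstacle throughout is that, unlike the H\"older smoothness $\theta(t)=t^\delta$, one cannot extract a geometric series for free, so all annular sums must be bundled into the convergence of $\int_0^1\theta(t)/t\,dt$, and care is needed to ensure the $\theta$-modulus survives the passage to the weighted setting without requiring any strengthening of \eqref{theta1}.
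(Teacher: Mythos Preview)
The paper does not prove this theorem; it is quoted from Quek and Yang \cite{quek} and used as a black box in the proofs of Theorems~\ref{mainthm:1} and~\ref{mainthm:2}. So there is no ``paper's own proof'' to compare against.

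Your sketch is the standard route and is essentially correct. Two minor remarks. First, the paper (following \cite{liu}) records in \eqref{MJ} the slightly sharper pointwise estimate $\big[M^{\sharp}(|T_{\theta}f|^{\delta})\big]^{1/\delta}\leq C\,Mf$ for some $0<\delta<1$; using this form of the sharp maximal bound together with the Fefferman--Stein inequality gives the weighted $L^p$ result directly from the boundedness of $M$ on $L^p_w$ for $w\in A_p$, without needing to invoke the openness of $A_p$ to pick $r>1$ with $w\in A_{p/r}$. Second, for the $p=1$, $w\in A_1$ endpoint, your weighted Calder\'on--Zygmund decomposition works, but one must be careful: if you select cubes with respect to $w\,dx$, the cancellation property $\int_{Q_i}b_i=0$ (with respect to Lebesgue measure) is not automatic, and this cancellation is exactly what is needed to invoke the kernel smoothness $(ii)$. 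The cleanest fix is to run the unweighted Calder\'on--Zygmund decomposition at height $\lambda/w(\mathbb R^n)$-normalized appropriately, or equivalently to follow the approach in \cite{quek,garcia}: decompose with respect to Lebesgue measure and then use the $A_1$ condition $|Q_i|^{-1}\int_{Q_i}w\leq C\inf_{Q_i}w$ together with the Hormander-type bound $\int_{(2Q_i)^c}|K(x,y)-K(x,y_i)|w(x)\,dx\leq Cw(y)$ (which follows from the Dini condition and $w\in A_1$) to control the bad part.
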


Since linear commutator has a greater degree of singularity than the corresponding $\theta$-type Calder\'on--Zygmund operator, we need a slightly stronger condition (\ref{theta2}) given below. The following weighted endpoint estimate for commutator $[b,T_{\theta}]$ of the $\theta$-type Calder\'on--Zygmund operator was established in \cite{zhang2} under a stronger condition (\ref{theta2}) assumed on $\theta$, if $b\in BMO(\mathbb R^n)$ (for the unweighted case, see \cite{liu}).

\begin{theorem}[\cite{zhang2}]\label{commutator}
Let $\theta$ be a non-negative, non-decreasing function on $\mathbb R^+=(0,+\infty)$ with
\begin{equation}\label{theta2}
\int_0^1\frac{\theta(t)\cdot|\log t|}{t}dt<\infty,
\end{equation}
and let $w\in A_1$ and $b\in BMO(\mathbb R^n)$. Then for all $\sigma>0$, there is a constant $C>0$ independent of $f$ and $\sigma>0$ such that
\begin{equation*}
w\big(\big\{x\in\mathbb R^n:\big|[b,T_\theta](f)(x)\big|>\sigma\big\}\big)
\leq C\int_{\mathbb R^n}\Phi\left(\frac{|f(x)|}{\sigma}\right)\cdot w(x)\,dx,
\end{equation*}
where $\Phi(t)=t\cdot(1+\log^+t)$ and $\log^+t=\max\{\log t,0\}$.
\end{theorem}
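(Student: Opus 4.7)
By replacing $f$ with $f/\sigma$, we may and shall assume $\sigma=1$; the argument then follows the P\'erez-type $L\log L$ endpoint scheme. Apply the Calder\'on--Zygmund decomposition of $|f|$ at height $1$: there exist pairwise disjoint dyadic cubes $\{Q_j\}$ with $1<|Q_j|^{-1}\int_{Q_j}|f|\le 2^n$ and $|f|\le 1$ a.e.\ on $\mathbb R^n\setminus\Omega$, where $\Omega:=\bigcup_j Q_j$. Write $f=g+h$, with $g:=f\chi_{\Omega^c}+\sum_j f_{Q_j}\chi_{Q_j}$ (so $\|g\|_\infty\le 2^n$) and $h:=\sum_j h_j$, $h_j:=(f-f_{Q_j})\chi_{Q_j}$ (each having mean zero). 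Then
\[w\bigl(\{|[b,T_\theta]f|>1\}\bigr)\le w\bigl(\{|[b,T_\theta]g|>1/2\}\bigr)+w\bigl(\{|[b,T_\theta]h|>1/2\}\bigr).\]

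For the good piece $g$, I would invoke the $L^2_w$-boundedness of $[b,T_\theta]$ for $w\in A_1\subset A_2$ (obtained from Theorem~\ref{strongweak} by the classical Coifman--Rochberg--Weiss argument), apply Chebyshev, and use $\|g\|_\infty\le C$ to get
\[w\bigl(\{|[b,T_\theta]g|>1/2\}\bigr)\le C\|b\|_{BMO}^2\int|g|^2 w\le C\int|g|w\le C\int|f|w\le C\int\Phi(|f|)w,\]
where the bound $\int|g|w\le C\int|f|w$ absorbs the averages $f_{Q_j}\chi_{Q_j}$ via the $A_1$ inequality $w(Q_j)/|Q_j|\le C\operatorname*{essinf}_{Q_j}w$, and the final step uses $t\le\Phi(t)$.

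For the bad piece, set $Q_j^*:=cQ_j$ with $c$ large enough (e.g.\ $c=4\sqrt{n}$) that the smoothness condition on $K$ applies whenever $x\notin Q_j^*$ and $y\in Q_j$, and let $\Omega^*:=\bigcup_j Q_j^*$. Doubling of $A_1$ weights together with $|Q_j|<\int_{Q_j}|f|$ gives $w(\Omega^*)\le C\sum_j w(Q_j)\le C\sum_j\int_{Q_j}|f|w\le C\int\Phi(|f|)w$. Outside $\Omega^*$ I apply Chebyshev and decompose
\[[b,T_\theta]h(x)=\sum_j\bigl(b(x)-b_{Q_j}\bigr)T_\theta h_j(x)-\sum_j T_\theta\bigl((b-b_{Q_j})h_j\bigr)(x)=:F_1(x)-F_2(x).\]
For $F_1$, the mean zero of $h_j$ and the kernel smoothness yield $|T_\theta h_j(x)|\le C\int_{Q_j}|x-y_j|^{-n}\theta(|y-y_j|/|x-y_j|)|h_j(y)|\,dy$ on $(Q_j^*)^c$; decomposing $(Q_j^*)^c$ into dyadic annuli $2^{k+1}Q_j^*\setminus 2^k Q_j^*$ produces a factor $C(k+1)\|b\|_{BMO}$ from $|b(x)-b_{Q_j}|$ integrated against $w$ (John--Nirenberg together with $w\in A_\infty$) and a factor $\theta(C2^{-k})$ from the kernel, so the total is controlled by $\|b\|_{BMO}\bigl[\sum_{k\ge 0}(k+1)\theta(C2^{-k})\bigr]\int|f|w$; the bracketed series is finite precisely by (\ref{theta2}). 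For $F_2$, I bound $|T_\theta((b-b_{Q_j})h_j)(x)|$ via the size of $K$ and the generalized H\"older inequality in the Young pair $(\Phi,\Psi)$ with $\Psi(t)=e^t-1$ (so that John--Nirenberg gives $\|b-b_{Q_j}\|_{\Psi,Q_j}\lesssim\|b\|_{BMO}$), and conclude via the $A_1$-weighted endpoint estimate for the Orlicz maximal function $M_{L\log L}$, obtaining $\int_{(\Omega^*)^c}|F_2|w\le C\|b\|_{BMO}\int\Phi(|f|)w$.

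\textbf{Main obstacle.} The decisive difficulty is the $F_1$ estimate: the John--Nirenberg logarithmic growth $|b_{2^kQ}-b_Q|\lesssim(k+1)\|b\|_{BMO}$ on dyadic dilates of $Q_j$ must be absorbed by the modulus of continuity $\theta$, and finiteness of $\sum_{k\ge 0}(k+1)\theta(2^{-k})$ is precisely the strengthened hypothesis (\ref{theta2}). The $F_2$ piece is the second technical hurdle and genuinely requires the Young/Orlicz framework: a routine $L^1$-size estimate would lose a logarithm and fail to produce the sharp $L\log L$ right-hand side.
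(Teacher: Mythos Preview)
The paper does not contain a proof of this theorem: it is quoted as a known result from \cite{zhang2} and is used only as a black box (in the proof of Theorem~\ref{mainthm:4}). So there is no ``paper's own proof'' to compare against. Your outline is the standard P\'erez--Calder\'on--Zygmund scheme that underlies the cited reference, and the treatment of the good part $g$ and of $F_1$ is correct; in particular you have identified exactly the point where the logarithmic Dini condition \eqref{theta2} is consumed, namely the convergence of $\sum_{k\ge0}(k+1)\,\theta(c\,2^{-k})$ needed to absorb the John--Nirenberg growth $|b_{2^kQ_j}-b_{Q_j}|\lesssim(k+1)\|b\|_*$ against the kernel smoothness on dyadic annuli.

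There is, however, a genuine gap in your handling of $F_2$. You propose to apply Chebyshev on $(\Omega^*)^c$ and then bound $\int_{(\Omega^*)^c}|F_2|\,w$ using only the \emph{size} of $K$. That integral is in general infinite: size alone gives, for $x\notin Q_j^*$,
\[
\bigl|T_\theta\bigl((b-b_{Q_j})h_j\bigr)(x)\bigr|\le \frac{C}{|x-y_j|^{n}}\int_{Q_j}|b-b_{Q_j}|\,|h_j|,
\]
and $\int_{(Q_j^*)^c}|x-y_j|^{-n}w(x)\,dx=\infty$ already for $w\equiv1$. Since $(b-b_{Q_j})h_j$ has no cancellation, there is no gain from the smoothness of $K$ either, so no $L^1$ estimate for $F_2$ off $\Omega^*$ is available. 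The correct step is to bypass Chebyshev and apply the weighted weak $(1,1)$ bound for $T_\theta$ (Theorem~\ref{strongweak}, $w\in A_1$) directly to the function $\sum_j(b-b_{Q_j})h_j$, obtaining
\[
w\bigl(\{|F_2|>1/4\}\bigr)\le C\sum_j\int_{Q_j}|b(y)-b_{Q_j}|\,|h_j(y)|\,w(y)\,dy,
\]
and only then invoke the weighted Orlicz--H\"older inequality \eqref{Wholder} on each $Q_j$ together with the weighted John--Nirenberg bound \eqref{Jensen} to reach $C\|b\|_*\sum_j\int_{Q_j}\Phi(|f|)\,w$. The appeal to an endpoint estimate for $M_{L\log L}$ is unnecessary here once the weak-type bound for $T_\theta$ is used.
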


On the other hand, the classical Morrey space was originally introduced by Morrey in \cite{morrey} to study the local behavior of solutions to second order elliptic partial differential equations. Since then, this space played an important role in studying the regularity of solutions to partial differential equations. In \cite{mizuhara}, Mizuhara introduced the generalized Morrey space $\mathcal L^{p,\Psi}$ which was later extended and studied by many authors. In \cite{komori}, Komori and Shirai defined a version of the weighted Morrey space $\mathcal L^{p,\kappa}(w)$ which is a natural generalization of the weighted Lebesgue space.Let $T_\theta$ be the $\theta$-type Calder\'on--Zygmund operator, and let $[b,T_{\theta}]$ be its linear commutator. The main purpose of this paper is twofold. We first define a new kind of Morrey type spaces $\mathcal M^{p,\psi}(w)$ containing generalized Morrey space $\mathcal L^{p,\Psi}$ and weighted Morrey space $\mathcal L^{p,\kappa}(w)$ as special cases, and then we will establish the weighted strong type and endpoint estimates for $T_{\theta}$ and $[b,T_{\theta}]$ in these Morrey type spaces $\mathcal M^{p,\psi}(w)$ for all $1\leq p<\infty$ and $w\in A_p$. In addition, we will discuss two-weight, weak type norm inequalities for $T_{\theta}$ and $[b,T_{\theta}]$ in $\mathcal M^{p,\psi}(w)$ and give some partial results.

Throughout this paper $C$ will denote a positive constant whose value may change at each appearance. We also use $A\approx B$ to denote the equivalence of $A$ and $B$; that is, there exist two positive constants $C_1$, $C_2$ independent of $A$, $B$ such that $C_1 A\leq B\leq C_2 A$.

\section{Statements of the main results}

\subsection{Notation and preliminaries}
Let $\mathbb R^n$ be the $n$-dimensional Euclidean space of points $x=(x_1,x_2,\dots,x_n)$ with norm $|x|=(\sum_{i=1}^n x_i^2)^{1/2}$. For $x_0\in\mathbb R^n$ and $r>0$, let $B(x_0,r)=\{x\in\mathbb R^n:|x-x_0|<r\}$ denote the open ball centered at $x_0$ of radius $r$, $B(x_0,r)^c$ denote its complement and $|B(x_0,r)|$ be the Lebesgue measure of the ball $B(x_0,r)$. A weight $w$ is a nonnegative locally integrable function on $\mathbb R^n$ that takes values in $(0,+\infty)$ almost everywhere. A weight $w$ is said to belong to the Muckenhoupt's class $A_p$ for $1<p<\infty$, if there exists a constant $C>0$ such that
\begin{equation*}
\left(\frac1{|B|}\int_B w(x)\,dx\right)^{1/p}\left(\frac1{|B|}\int_B w(x)^{-p'/p}\,dx\right)^{1/{p'}}\leq C
\end{equation*}
for every ball $B\subset\mathbb R^n$, where $p'$ is the dual of $p$ such that $1/p+1/{p'}=1$. The class $A_1$ is defined replacing the above inequality by
\begin{equation*}
\frac1{|B|}\int_B w(x)\,dx\leq C\cdot\underset{x\in B}{\mbox{ess\,inf}}\;w(x)
\end{equation*}
for every ball $B\subset\mathbb R^n$. We also define $A_\infty=\bigcup_{1\leq p<\infty}A_p$. Given a ball $B$ and $\lambda>0$, $\lambda B$ will denote the ball with the same center as $B$ whose radius is $\lambda$ times that of $B$. Given a Lebesgue measurable set $E$ and a weight function $w$, we denote the characteristic function of $E$ by $\chi_E$, the Lebesgue measure of $E$ by $|E|$ and the weighted measure of $E$ by $w(E)$, where $w(E)=\int_E w(x)\,dx$. It is well known that if $w\in A_p$ with $1\leq p<\infty$(or $w\in A_\infty$), then $w$ satisfies the doubling condition; that is, for any ball $B$, there exists an absolute constant $C>0$ such that (see \cite{garcia})
\begin{equation}\label{weights}
w(2B)\leq C\,w(B).
\end{equation}
Moreover, if $w\in A_\infty$, then for any ball $B$ and any measurable subset $E$ of a ball $B$, there exists a number $\delta>0$ independent of $E$ and $B$ such that (see \cite{garcia})
\begin{equation}\label{compare}
\frac{w(E)}{w(B)}\le C\left(\frac{|E|}{|B|}\right)^\delta.
\end{equation}

Given a weight function $w$ on $\mathbb R^n$, as usual, the weighted Lebesgue space $L^p_w(\mathbb R^n)$ for $1\leq p<\infty$ is defined as the set of all functions $f$ such that
\begin{equation*}
\big\|f\big\|_{L^p_w}:=\bigg(\int_{\mathbb R^n}\big|f(x)\big|^pw(x)\,dx\bigg)^{1/p}<\infty.
\end{equation*}
We also denote by $WL^p_w(\mathbb R^n)$($1\leq p<\infty$) the weighted weak Lebesgue space consisting of all measurable functions $f$ such that
\begin{equation*}
\big\|f\big\|_{WL^p_w}:=
\sup_{\lambda>0}\lambda\cdot\Big[w\big(\big\{x\in\mathbb R^n:|f(x)|>\lambda\big\}\big)\Big]^{1/p}<\infty.
\end{equation*}

We next recall some basic definitions and facts about Orlicz spaces needed for the proof of the main results. For further information on the subject, one can see \cite{rao}. A function $\mathcal A$ is called a Young function if it is continuous, nonnegative, convex and strictly increasing on $[0,+\infty)$ with $\mathcal A(0)=0$ and $\mathcal A(t)\to +\infty$ as $t\to +\infty$. An important example of Young function is $\mathcal A(t)=t^p(1+\log^+t)^p$ with some $1\leq p<\infty$. Given a Young function $\mathcal A$, we define the $\mathcal A$-average of a function $f$ over a ball $B$ by means of the following Luxemburg norm:
\begin{equation*}
\big\|f\big\|_{\mathcal A,B}
:=\inf\left\{\lambda>0:\frac{1}{|B|}\int_B\mathcal A\left(\frac{|f(x)|}{\lambda}\right)dx\leq1\right\}.
\end{equation*}
When $\mathcal A(t)=t^p$, $1\leq p<\infty$, it is easy to see that
\begin{equation*}
\big\|f\big\|_{\mathcal A,B}=\left(\frac{1}{|B|}\int_B\big|f(x)\big|^p\,dx\right)^{1/p};
\end{equation*}
that is, the Luxemburg norm coincides with the normalized $L^p$ norm. Given a Young function $\mathcal A$, we use $\bar{\mathcal A}$ to denote the complementary Young function associated to $\mathcal A$. Then the following generalized H\"older's inequality holds for any given ball $B$:
\begin{equation*}
\frac{1}{|B|}\int_B\big|f(x)\cdot g(x)\big|\,dx\leq 2\big\|f\big\|_{\mathcal A,B}\big\|g\big\|_{\bar{\mathcal A},B}.
\end{equation*}
In particular, when $\mathcal A(t)=t\cdot(1+\log^+t)$, we know that its complementary Young function is $\bar{\mathcal A}(t)\approx\exp(t)$. In this situation, we denote
\begin{equation*}
\big\|f\big\|_{L\log L,B}=\big\|f\big\|_{\mathcal A,B}, \qquad
\big\|g\big\|_{\exp L,B}=\big\|g\big\|_{\bar{\mathcal A},B}.
\end{equation*}
So we have
\begin{equation}\label{holder}
\frac{1}{|B|}\int_B\big|f(x)\cdot g(x)\big|\,dx\leq 2\big\|f\big\|_{L\log L,B}\big\|g\big\|_{\exp L,B}.
\end{equation}

Let us now recall the definition of the space of $BMO(\mathbb R^n)$(see \cite{duoand,john}). $BMO(\mathbb R^n)$ is the Banach function space modulo constants with the norm $\|\cdot\|_*$ defined by
\begin{equation*}
\|b\|_*:=\sup_{B}\frac{1}{|B|}\int_B|b(x)-b_B|\,dx<\infty,
\end{equation*}
where the supremum is taken over all balls $B$ in $\mathbb R^n$ and $b_B$ stands for the mean value of $b$ over $B$, that is,
\begin{equation*}
b_B:=\frac{1}{|B|}\int_B b(y)\,dy.
\end{equation*}

\subsection{Morrey type spaces}
Let us begin with the definitions of the weighted Morrey space and generalized Morrey space.
\begin{defn}[\cite{komori}]
Let $1\leq p<\infty$, $0<\kappa<1$ and $w$ be a weight function on $\mathbb R^n$. Then the weighted Morrey space $\mathcal L^{p,\kappa}(w)$ is defined by
\begin{equation*}
\mathcal L^{p,\kappa}(w):=\left\{f\in L^p_{loc}(w):\big\|f\big\|_{\mathcal L^{p,\kappa}(w)}
=\sup_B\left(\frac{1}{w(B)^{\kappa}}\int_B|f(x)|^pw(x)\,dx\right)^{1/p}<\infty\right\},
\end{equation*}
where the supremum is taken over all balls $B$ in $\mathbb R^n$. We also denote by $W\mathcal L^{1,\kappa}(w)$ the weighted weak Morrey space of all measurable functions $f$ such that
\begin{equation*}
\sup_B\sup_{\lambda>0}\frac{1}{w(B)^{\kappa}}\lambda\cdot w\big(\big\{x\in B:|f(x)|>\lambda\big\}\big)
\leq C<\infty.
\end{equation*}
\end{defn}

Let $\Psi=\Psi(r)$, $r>0$, be a growth function, that is, a positive increasing function in $(0,+\infty)$ and satisfy the following doubling condition:
\begin{equation}\label{doubling}
\Psi(2r)\leq D\cdot\Psi(r), \qquad \mbox{for all }\,r>0,
\end{equation}
where $D=D(\Psi)\ge1$ is a doubling constant independent of $r$.

\begin{defn}[\cite{mizuhara}]
Let $1\leq p<\infty$ and $\Psi$ be a growth function in $(0,+\infty)$. Then the generalized Morrey space $\mathcal L^{p,\Psi}(\mathbb R^n)$ is defined by
\begin{equation*}
\mathcal L^{p,\Psi}(\mathbb R^n):=\left\{f\in L^p_{loc}(\mathbb R^n):\big\|f\big\|_{\mathcal L^{p,\Psi}}
=\sup_{r>0;B(x_0,r)}\bigg(\frac{1}{\Psi(r)}\int_{B(x_0,r)}|f(x)|^p\,dx\bigg)^{1/p}<\infty\right\},
\end{equation*}
where the supremum is taken over all balls $B(x_0,r)$ in $\mathbb R^n$ with $x_0\in\mathbb R^n$. We also denote by $W\mathcal L^{1,\Psi}(\mathbb R^n)$ the generalized weak Morrey space of all measurable functions $f$ for which
\begin{equation*}
\sup_{B(x_0,r)}\sup_{\lambda>0}\frac{1}{\Psi(r)}\lambda\cdot\big|\big\{x\in B(x_0,r):|f(x)|>\lambda\big\}\big|
\leq C<\infty.
\end{equation*}
\end{defn}

In order to unify these two definitions, we now introduce Morrey type spaces associated to $\psi$ as follows.
Let $0\leq\kappa<1$. Assume that $\psi(\cdot)$ is a positive increasing function defined in $(0,+\infty)$ and satisfies the following $\mathcal D_\kappa$ condition:
\begin{equation}\label{D condition}
\frac{\psi(\xi)}{\xi^\kappa}\leq C\cdot\frac{\psi(\xi')}{(\xi')^\kappa},
\qquad \mbox{for any}\;0<\xi'<\xi<+\infty,
\end{equation}
where $C>0$ is a constant independent of $\xi$ and $\xi'$.

\begin{defn}\label{WMorrey}
Let $1\leq p<\infty$, $0\leq\kappa<1$, $\psi$ satisfy the $\mathcal D_\kappa$ condition $(\ref{D condition})$ and $w$ be a weight function on $\mathbb R^n$. We denote by $\mathcal M^{p,\psi}(w)$ the generalized weighted Morrey space, the space of all locally integrable functions $f$ with finite norm
\begin{equation*}
\big\|f\big\|_{\mathcal M^{p,\psi}(w)}
:=\sup_B\left(\frac{1}{\psi(w(B))}\int_B |f(x)|^pw(x)\,dx\right)^{1/p}\leq C<\infty.
\end{equation*}
Then we know that $\mathcal M^{p,\psi}(w)$ becomes a Banach function space with respect to the norm $\|\cdot\|_{\mathcal M^{p,\psi}(w)}$. Furthermore, we denote by $W\mathcal M^{p,\psi}(w)$ the generalized weighted weak Morrey space of all measurable functions $f$ for which
\begin{equation*}
\big\|f\big\|_{W\mathcal M^{p,\psi}(w)}:=\sup_B\sup_{\sigma>0}\frac{1}{\psi(w(B))^{1/p}}\sigma
\cdot \Big[w\big(\big\{x\in B:|f(x)|>\sigma\big\}\big)\Big]^{1/p}\leq C<\infty.
\end{equation*}
\end{defn}

\begin{defn}
In the unweighted case (when $w$ equals a constant function), we denote the generalized unweighted Morrey space by $\mathcal M^{p,\psi}(\mathbb R^n)$ and weak Morrey space by $W\mathcal M^{p,\psi}(\mathbb R^n)$. That is, let $1\leq p<\infty$ and $\psi$ satisfy the $\mathcal D_\kappa$ condition $(\ref{D condition})$ with $0\leq\kappa<1$, we define
\begin{equation*}
\mathcal M^{p,\psi}(\mathbb R^n):=\left\{f\in L^p_{loc}(\mathbb R^n):
\big\|f\big\|_{\mathcal M^{p,\psi}}=\sup_B\bigg(\frac{1}{\psi(|B|)}\int_B |f(x)|^p\,dx\bigg)^{1/p}<\infty\right\},
\end{equation*}
and
\begin{equation*}
W\mathcal M^{p,\psi}(\mathbb R^n):=\left\{f:
\big\|f\big\|_{W\mathcal M^{p,\psi}}=\sup_B\sup_{\sigma>0}\frac{1}{\psi(|B|)^{1/p}}\sigma
\cdot \Big|\big\{x\in B:|f(x)|>\sigma\big\}\Big|^{1/p}<\infty\right\}.
\end{equation*}
\end{defn}

Note that
\begin{itemize}
  \item If $\psi(x)\equiv 1$, then $\mathcal M^{p,\psi}(w)=L^p_w(\mathbb R^n)$ and $W\mathcal M^{p,\psi}(w)=WL^p_w(\mathbb R^n)$. Thus our (weak) Morrey type space is an extension of the weighted (weak) Lebesgue space;
  \item If $\psi(x)=x^{\kappa}$ with $0<\kappa<1$, then $\mathcal M^{p,\psi}(w)$ is just the weighted Morrey space $\mathcal L^{p,\kappa}(w)$, and $W\mathcal M^{1,\psi}(w)$ is just the weighted weak Morrey space $W\mathcal L^{1,\kappa}(w)$;
  \item If $w(x)\equiv 1$, below we will show that $\mathcal M^{p,\psi}(\mathbb R^n)$ reduces to the generalized Morrey space $\mathcal L^{p,\Psi}(\mathbb R^n)$, and $W\mathcal M^{1,\psi}(\mathbb R^n)$ reduces to the generalized weak Morrey space $W\mathcal L^{1,\Psi}(\mathbb R^n)$.
\end{itemize}

Our main results on the boundedness of $T_{\theta}$ in the Morrey type spaces $\mathcal M^{p,\psi}(w)$ can be formulated as follows.

\begin{theorem}\label{mainthm:1}
Let $1<p<\infty$ and $w\in A_p$. Assume that $\psi$ satisfies the $\mathcal D_\kappa$ condition $(\ref{D condition})$ with $0\leq\kappa<1$, then the $\theta$-type Calder\'on--Zygmund operator $T_{\theta}$ is bounded on $\mathcal M^{p,\psi}(w)$.
\end{theorem}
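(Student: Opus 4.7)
\medskip
\noindent\textbf{Proof plan for Theorem \ref{mainthm:1}.} The plan is the standard ``localization + tail'' argument adapted to the new norm. Fix a ball $B=B(x_0,r_B)$ and split $f=f_1+f_2$, where $f_1=f\chi_{2B}$ and $f_2=f\chi_{(2B)^c}$. By linearity it suffices to estimate the contributions of $T_\theta f_1$ and $T_\theta f_2$ to the average $\bigl(\psi(w(B))^{-1}\int_B |T_\theta f|^p w\bigr)^{1/p}$ separately, then take the supremum over $B$.

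For the local part I would invoke Theorem \ref{strongweak} directly: since $w\in A_p$, $\|T_\theta f_1\|_{L^p_w}\le C\|f_1\|_{L^p_w}=C\bigl(\int_{2B}|f|^p w\bigr)^{1/p}\le C\,\psi(w(2B))^{1/p}\|f\|_{\mathcal M^{p,\psi}(w)}$. The doubling property (\ref{weights}) gives $w(2B)\le C w(B)$, and then the $\mathcal D_\kappa$ condition (\ref{D condition}) applied with $\xi=w(2B)$, $\xi'=w(B)$ yields $\psi(w(2B))\le C\,\psi(w(B))$. Dividing by $\psi(w(B))^{1/p}$ finishes the local estimate.

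For the tail part I would use only the size condition (i) of the $\theta$-type kernel. For $x\in B$ and $y\in(2B)^c$, $|x-y|\approx|x_0-y|$, so decomposing $(2B)^c=\bigcup_{k\ge 1}(2^{k+1}B\setminus 2^kB)$ gives the pointwise bound
\begin{equation*}
|T_\theta f_2(x)|\le C\sum_{k=1}^\infty \frac{1}{|2^{k+1}B|}\int_{2^{k+1}B}|f(y)|\,dy.
\end{equation*}
Hölder's inequality in the form $\int_{2^{k+1}B}|f|\le \bigl(\int_{2^{k+1}B}|f|^p w\bigr)^{1/p}\bigl(\int_{2^{k+1}B}w^{-p'/p}\bigr)^{1/p'}$ together with the $A_p$ condition for the ball $2^{k+1}B$ reduces the inner average to $C\,\psi(w(2^{k+1}B))^{1/p}w(2^{k+1}B)^{-1/p}\|f\|_{\mathcal M^{p,\psi}(w)}$. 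Applying the $\mathcal D_\kappa$ condition to $\psi(w(2^{k+1}B))/w(2^{k+1}B)^\kappa$ and rearranging produces the factor $\psi(w(B))^{1/p}w(B)^{-1/p}\bigl(w(B)/w(2^{k+1}B)\bigr)^{(1-\kappa)/p}$. Here is where the $\mathcal D_\kappa$ hypothesis $\kappa<1$ is crucial: combining with the $A_\infty$ estimate (\ref{compare}) applied to $B\subset 2^{k+1}B$ gives $w(B)/w(2^{k+1}B)\le C\,2^{-(k+1)n\delta}$, so the resulting geometric series $\sum_k 2^{-(k+1)n\delta(1-\kappa)/p}$ converges and one obtains $|T_\theta f_2(x)|\le C\,(\psi(w(B))/w(B))^{1/p}\|f\|_{\mathcal M^{p,\psi}(w)}$ uniformly in $x\in B$. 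Integrating $|T_\theta f_2|^p w$ over $B$ gives exactly $C\,\psi(w(B))\|f\|_{\mathcal M^{p,\psi}(w)}^p$, which is the required bound.

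The only point that needs care is the interplay of the three size hypotheses in the tail estimate: the $\mathcal D_\kappa$ condition converts $\psi(w(2^{k+1}B))$ into $\psi(w(B))$ at the cost of a positive power of $w(B)/w(2^{k+1}B)$, and this must be dominated by the geometric decay coming from $A_\infty$ reverse doubling. The hypothesis $\kappa<1$ in Definition \ref{WMorrey} is precisely what makes this trade-off possible; no further property of the kernel beyond (i) is required because the smoothness condition (ii) is already absorbed by invoking Theorem \ref{strongweak} for the local part.
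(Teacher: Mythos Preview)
Your proof is correct and follows essentially the same route as the paper: the identical $f=f\chi_{2B}+f\chi_{(2B)^c}$ decomposition, Theorem~\ref{strongweak} plus doubling and the $\mathcal D_\kappa$ condition for the local part, and for the tail the pointwise dyadic bound from the kernel size estimate, H\"older with the $A_p$ condition, then the $\mathcal D_\kappa$ condition combined with the $A_\infty$ estimate~(\ref{compare}) to sum the geometric series. The only cosmetic difference is that the paper first integrates the pointwise tail bound over $B$ and then sums the resulting series $\sum_j \psi(w(2^{j+1}B))^{1/p}\psi(w(B))^{-1/p}\cdot w(B)^{1/p}w(2^{j+1}B)^{-1/p}$, whereas you absorb the $\psi(w(B))^{1/p}w(B)^{-1/p}$ factor into the pointwise estimate before integrating; these are trivially equivalent.
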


\begin{theorem}\label{mainthm:2}
Let $p=1$ and $w\in A_1$. Assume that $\psi$ satisfies the $\mathcal D_\kappa$ condition $(\ref{D condition})$ with $0\leq\kappa<1$, then the $\theta$-type Calder\'on--Zygmund operator $T_{\theta}$ is bounded from $\mathcal M^{1,\psi}(w)$ into $W\mathcal M^{1,\psi}(w)$.
\end{theorem}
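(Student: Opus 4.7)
The plan is to fix a ball $B=B(x_0,r_B)$ and split $f=f_1+f_2$ where $f_1:=f\chi_{2B}$ and $f_2:=f\chi_{(2B)^c}$, so that $T_\theta f=T_\theta f_1+T_\theta f_2$. Then for any $\sigma>0$,
\begin{equation*}
w\big(\{x\in B:|T_\theta f(x)|>\sigma\}\big)\le w\big(\{x\in B:|T_\theta f_1(x)|>\sigma/2\}\big)+w\big(\{x\in B:|T_\theta f_2(x)|>\sigma/2\}\big),
\end{equation*}
and it suffices to bound each term by $C\sigma^{-1}\psi(w(B))\|f\|_{\mathcal M^{1,\psi}(w)}$ uniformly in $B$ and $\sigma$.

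For the local piece $T_\theta f_1$, I would apply the weighted weak $(1,1)$ estimate of Theorem \ref{strongweak} (with $p=1$, $w\in A_1$) to $f_1\in L^1_w$. This yields a bound of the form $C\sigma^{-1}\int_{2B}|f(y)|w(y)\,dy$, and using the definition of $\mathcal M^{1,\psi}(w)$ together with the doubling property (\ref{weights}) for $w\in A_1\subset A_\infty$ and the $\mathcal D_\kappa$ condition (\ref{D condition}), one obtains $\psi(w(2B))\le C\psi(w(B))$ (since $w(2B)\le Cw(B)$ and $\mathcal D_\kappa$ gives $\psi(w(2B))\le C(w(2B)/w(B))^\kappa\psi(w(B))$). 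This disposes of the $f_1$ term.

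For the global piece $T_\theta f_2$, I would argue pointwise on $B$. If $x\in B$ and $y\in(2B)^c$ then $|x-y|\approx|y-x_0|$, so size estimate (i) of the kernel gives
\begin{equation*}
|T_\theta f_2(x)|\le C\sum_{k=1}^{\infty}\int_{2^{k+1}B\setminus 2^kB}\frac{|f(y)|}{|y-x_0|^n}\,dy\le C\sum_{k=1}^{\infty}\frac{1}{|2^{k+1}B|}\int_{2^{k+1}B}|f(y)|\,dy.
\end{equation*}
The $A_1$ condition yields $|2^{k+1}B|/w(2^{k+1}B)\le C/w(y)$ a.e.\ on $2^{k+1}B$, which converts each averaged integral into $Cw(2^{k+1}B)^{-1}\int_{2^{k+1}B}|f|w$; this is bounded by $C\psi(w(2^{k+1}B))/w(2^{k+1}B)\cdot\|f\|_{\mathcal M^{1,\psi}(w)}$. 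Now combining the $\mathcal D_\kappa$ condition (with exponent $\kappa<1$) on the numerator with the reverse-doubling estimate (\ref{compare}) applied to $B\subset 2^{k+1}B$ (which gives $w(B)/w(2^{k+1}B)\le C\,2^{-(k+1)n\delta}$) produces the key geometric factor $2^{-(k+1)n\delta(1-\kappa)}$; the resulting series converges precisely because $\kappa<1$, yielding the pointwise bound $|T_\theta f_2(x)|\le C(\psi(w(B))/w(B))\|f\|_{\mathcal M^{1,\psi}(w)}$ for a.e.\ $x\in B$.

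With this uniform pointwise bound, the level set $\{x\in B:|T_\theta f_2(x)|>\sigma/2\}$ is either empty or has weighted measure at most $w(B)$, and in either case the trivial estimate $\sigma\cdot w(B)\le C\psi(w(B))\|f\|_{\mathcal M^{1,\psi}(w)}$ takes care of the contribution. Summing the two pieces and dividing by $\psi(w(B))^{1/p}=\psi(w(B))$ gives the desired norm inequality. The main obstacle is the $f_2$ estimate: it requires simultaneously exploiting the $A_1$ condition to pass from unweighted to weighted averages, the $\mathcal D_\kappa$ growth of $\psi$, and the reverse-doubling inequality (\ref{compare}) to generate enough geometric decay; the strict inequality $\kappa<1$ built into the $\mathcal D_\kappa$ condition is exactly what makes the telescoping series summable.
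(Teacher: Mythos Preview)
Your proposal is correct and follows essentially the same approach as the paper: the same decomposition $f=f_1+f_2$, the same use of the weighted weak $(1,1)$ bound for $f_1$ with doubling plus $\mathcal D_\kappa$, and the same pointwise control of $T_\theta f_2$ via the kernel size estimate, the $A_1$ condition, $\mathcal D_\kappa$, and (\ref{compare}) to produce the geometric series $\sum_k 2^{-(k+1)n\delta(1-\kappa)}$. The only cosmetic difference is that the paper applies Chebyshev's inequality to $T_\theta f_2$ and then sums, whereas you first sum to a single pointwise bound $|T_\theta f_2(x)|\le C\psi(w(B))/w(B)\,\|f\|_{\mathcal M^{1,\psi}(w)}$ and then use the empty/trivial level-set dichotomy; these are equivalent reformulations of the same estimate.
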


Let $\theta$ be a non-negative, non-decreasing function on $\mathbb R^+=(0,+\infty)$ satisfying the condition $(\ref{theta2})$, and let $[b,T_{\theta}]$ be the commutator formed by $T_{\theta}$ and BMO function $b$. For the strong type estimate of the linear commutator $[b,T_{\theta}]$ in $\mathcal M^{p,\psi}(w)$ with $1<p<\infty$, we will prove

\begin{theorem}\label{mainthm:3}
Let $1<p<\infty$, $w\in A_p$ and $b\in BMO(\mathbb R^n)$. Assume that $\theta$ satisfies $(\ref{theta2})$ and $\psi$ satisfies the $\mathcal D_\kappa$ condition $(\ref{D condition})$ with $0\leq\kappa<1$, then the commutator operator $[b,T_{\theta}]$ is bounded on $\mathcal M^{p,\psi}(w)$.
\end{theorem}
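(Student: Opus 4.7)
The plan is to prove Theorem \ref{mainthm:3} by the standard Morrey-space splitting: fix an arbitrary ball $B=B(x_0,r_B)$, write $f=f_1+f_2$ with $f_1=f\chi_{2B}$ and $f_2=f\chi_{\mathbb R^n\setminus 2B}$, and then control $\|[b,T_\theta]f\|_{L^p_w(B)}$ by treating the ``local'' and ``global'' pieces separately. For the local piece, I would invoke the $L^p_w$-boundedness of $[b,T_\theta]$ (the strong-type companion of Theorem \ref{commutator}, available under the same hypothesis (\ref{theta2}) on $\theta$, $w\in A_p$ and $b\in BMO$) to obtain $\|[b,T_\theta]f_1\|_{L^p_w}\le C\|b\|_*\|f\|_{L^p_w(2B)}\le C\|b\|_*\psi(w(2B))^{1/p}\|f\|_{\mathcal M^{p,\psi}(w)}$. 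Since $w\in A_p$ is doubling by (\ref{weights}) and $\psi$ satisfies the $\mathcal D_\kappa$ condition (\ref{D condition}), applying (\ref{D condition}) with $\xi=w(2B)$, $\xi'=w(B)$ yields $\psi(w(2B))\le C\psi(w(B))$; dividing by $\psi(w(B))^{1/p}$ produces the desired bound.

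For the global piece I would use the pointwise identity $[b,T_\theta]f_2(x)=(b(x)-b_B)\,T_\theta f_2(x)-T_\theta((b-b_B)f_2)(x)$ and estimate the two summands separately. Decompose $\mathbb R^n\setminus 2B=\bigcup_{k\ge 1}(2^{k+1}B\setminus 2^k B)$; for $x\in B$ and $y$ in the $k$-th annulus, $|x-y|\sim 2^kr_B$, so the size bound (i) of Definition 1 gives $|T_\theta f_2(x)|\le C\sum_{k\ge 1}|2^{k+1}B|^{-1}\int_{2^{k+1}B}|f(y)|\,dy$. Hölder's inequality against the $A_p$ weight $w$, together with the $A_p$ bound $\bigl(\int_{2^{k+1}B}w^{-p'/p}\bigr)^{1/p'}\le C|2^{k+1}B|\,w(2^{k+1}B)^{-1/p}$, gives $|T_\theta f_2(x)|\le C\|f\|_{\mathcal M^{p,\psi}(w)}\sum_k\psi(w(2^{k+1}B))^{1/p}/w(2^{k+1}B)^{1/p}$. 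Multiplying by $|b(x)-b_B|$, integrating in $x$ against $w$ over $B$, and using the BMO--$A_p$ estimate $\bigl(\frac{1}{w(B)}\int_B|b-b_B|^pw\bigr)^{1/p}\le C\|b\|_*$ reduces the first global term, after division by $\psi(w(B))^{1/p}$, to the series $\sum_k(\psi(w(2^{k+1}B))/\psi(w(B)))^{1/p}(w(B)/w(2^{k+1}B))^{1/p}$; invoking $\mathcal D_\kappa$ bounds this by $\sum_k(w(B)/w(2^{k+1}B))^{(1-\kappa)/p}$, which is geometrically summable by (\ref{compare}).

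The second global summand $T_\theta((b-b_B)f_2)$ is handled similarly: $|T_\theta((b-b_B)f_2)(x)|\le C\sum_k|2^{k+1}B|^{-1}\int_{2^{k+1}B}|b(y)-b_B||f(y)|\,dy$, and I split $|b(y)-b_B|\le|b(y)-b_{2^{k+1}B}|+|b_{2^{k+1}B}-b_B|$. The second contribution produces an additional factor $\le C(k+1)\|b\|_*$ and is controlled by the previous argument since $\sum_k(k+1)(w(B)/w(2^{k+1}B))^{(1-\kappa)/p}<\infty$ by (\ref{compare}). For the first contribution I set $\sigma=w^{-p'/p}\in A_{p'}$ and apply Hölder:
\[
\int_{2^{k+1}B}|b(y)-b_{2^{k+1}B}||f(y)|\,dy\le\Bigl(\int_{2^{k+1}B}|b-b_{2^{k+1}B}|^{p'}\sigma\Bigr)^{1/p'}\|f\|_{L^p_w(2^{k+1}B)},
\]
then use the weighted John--Nirenberg inequality for the $A_\infty$ weight $\sigma$ and the $A_p$ identity $\sigma(2^{k+1}B)^{1/p'}\le C|2^{k+1}B|w(2^{k+1}B)^{-1/p}$ to reproduce the same summable series (with an extra $(k+1)$ factor).

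The main obstacle is the first summand of the second global term: the BMO-mean $b_{2^{k+1}B}$ is a Lebesgue average, but we must estimate it against the weighted measure $\sigma\,dy$. This is overcome by replacing $b_{2^{k+1}B}$ with its $\sigma$-mean $b^*_{2^{k+1}B}$ at the cost of an additive constant bounded by $C\|b\|_*$ (a standard comparison of weighted and unweighted BMO means for $A_\infty$ weights) and then applying weighted John--Nirenberg. All remaining manipulations are routine, and the $\mathcal D_\kappa$ condition together with (\ref{compare}) is exactly what is needed to control every geometric series that arises.
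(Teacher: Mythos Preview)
Your proposal is correct and follows essentially the same route as the paper's proof: the same local/global splitting, the same $L^p_w$-boundedness of $[b,T_\theta]$ for the local piece (the paper cites Alvarez et al.\ for this), the same pointwise identity and annular decomposition for the global piece, and the same use of the dual weight $\mu=w^{-p'/p}\in A_{p'}$ for the term involving $|b(y)-b_{2^{k+1}B}|$. Two small remarks: the ``obstacle'' you flag is not really one, since the weighted BMO estimate you already used (the paper's Lemma \ref{BMO}(ii)) holds for any $A_p$ weight with the \emph{Lebesgue} mean $b_B$, so applying it directly to $\mu\in A_{p'}$ gives $\bigl(\int_{2^{k+1}B}|b-b_{2^{k+1}B}|^{p'}\mu\bigr)^{1/p'}\le C\|b\|_*\,\mu(2^{k+1}B)^{1/p'}$ without passing through $\sigma$-means; and consequently that term carries no extra $(k+1)$ factor---only the piece with $|b_{2^{k+1}B}-b_B|$ does.
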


To obtain endpoint estimate for the linear commutator $[b,T_{\theta}]$ in $\mathcal M^{1,\psi}(w)$, we first need to define the weighted $\mathcal A$-average of a function $f$ over a ball $B$ by means of the weighted Luxemburg norm; that is, given a Young function $\mathcal A$ and $w\in A_\infty$, we define (see \cite{rao,zhang})
\begin{equation*}
\big\|f\big\|_{\mathcal A(w),B}:=\inf\left\{\sigma>0:\frac{1}{w(B)}
\int_B\mathcal A\left(\frac{|f(x)|}{\sigma}\right)\cdot w(x)\,dx\leq1\right\}.
\end{equation*}
When $\mathcal A(t)=t$, this norm is denoted by $\|\cdot\|_{L(w),B}$, when $\mathcal A(t)=t\cdot(1+\log^+t)$, this norm is also denoted by $\|\cdot\|_{L\log L(w),B}$. The complementary Young function of $t\cdot(1+\log^+t)$ is $\exp t$ with mean Luxemburg norm denoted by $\|\cdot\|_{\exp L(w),B}$. For $w\in A_\infty$ and for every ball $B$ in $\mathbb R^n$, we can also show the weighted version of \eqref{holder}. Namely, the following generalized H\"older's inequality in the weighted setting
\begin{equation}\label{Wholder}
\frac{1}{w(B)}\int_B|f(x)\cdot g(x)|w(x)\,dx\leq C\big\|f\big\|_{L\log L(w),B}\big\|g\big\|_{\exp L(w),B}
\end{equation}
is valid (see \cite{zhang} for instance). Now we introduce new Morrey type spaces of $L\log L$ type associated to $\psi$ as follows.

\begin{defn}
Let $p=1$, $0\leq\kappa<1$, $\psi$ satisfy the $\mathcal D_\kappa$ condition $(\ref{D condition})$ and $w$ be a weight function on $\mathbb R^n$. We denote by $\mathcal M^{1,\psi}_{L\log L}(w)$ the generalized weighted Morrey space of $L\log L$ type, the space of all locally integrable functions $f$ defined on $\mathbb R^n$ with finite norm $\big\|f\big\|_{\mathcal M^{1,\psi}_{L\log L}(w)}$.
\begin{equation*}
\mathcal M^{1,\psi}_{L\log L}(w):=\left\{f\in L^1_{loc}(w):\big\|f\big\|_{\mathcal M^{1,\psi}_{L\log L}(w)}<\infty\right\},
\end{equation*}
where
\begin{equation*}
\big\|f\big\|_{\mathcal M^{1,\psi}_{L\log L}(w)}
:=\sup_B\left\{\frac{w(B)}{\psi(w(B))}\cdot\big\|f\big\|_{L\log L(w),B}\right\}.
\end{equation*}
\end{defn}

Note that $t\leq t\cdot(1+\log^+t)$ for all $t>0$, then for any ball $B\subset\mathbb R^n$ and $w\in A_\infty$, we have $\big\|f\big\|_{L(w),B}\leq \big\|f\big\|_{L\log L(w),B}$ by definition, i.e., the inequality
\begin{equation}\label{main esti1}
\big\|f\big\|_{L(w),B}=\frac{1}{w(B)}\int_B|f(x)|\cdot w(x)\,dx\leq\big\|f\big\|_{L\log L(w),B}
\end{equation}
holds for any ball $B\subset\mathbb R^n$. From this, we can further see that when $\psi$ satisfies the $\mathcal D_\kappa$ condition $(\ref{D condition})$ with $0\leq\kappa<1$,
\begin{equation}\label{main esti2}
\begin{split}
\frac{1}{\psi(w(B))}\int_B|f(x)|\cdot w(x)\,dx
&=\frac{w(B)}{\psi(w(B))}\cdot\frac{1}{w(B)}\int_B|f(x)|\cdot w(x)\,dx\\
&=\frac{w(B)}{\psi(w(B))}\cdot\big\|f\big\|_{L(w),B}\\
&\leq\frac{w(B)}{\psi(w(B))}\cdot\big\|f\big\|_{L\log L(w),B}.
\end{split}
\end{equation}
Hence, we have $\mathcal M^{1,\psi}_{L\log L}(w)\subset\mathcal M^{1,\psi}(w)$ by definition.
\begin{defn}
In the unweighted case (when $w$ equals a constant function), we denote by $\mathcal M^{1,\psi}_{L\log L}(\mathbb R^n)$ the generalized unweighted Morrey space of $L\log L$ type. That is, let $p=1$ and $\psi$ satisfy the $\mathcal D_\kappa$ condition $(\ref{D condition})$ with $0\leq\kappa<1$, we define
\begin{equation*}
\mathcal M^{1,\psi}_{L\log L}(\mathbb R^n):=\left\{f\in L^1_{loc}(\mathbb R^n):
\big\|f\big\|_{\mathcal M^{1,\psi}_{L\log L}(\mathbb R^n)}<\infty\right\},
\end{equation*}
where
\begin{equation*}
\big\|f\big\|_{\mathcal M^{1,\psi}_{L\log L}(\mathbb R^n)}
:=\sup_B\left\{\frac{|B|}{\psi(|B|)}\cdot\big\|f\big\|_{L\log L,B}\right\}.
\end{equation*}
\end{defn}

We also consider the special case when $\psi$ is taken to be $\psi(x)=x^\kappa$ with $0<\kappa<1$, and denote the corresponding space by $\mathcal L^{1,\kappa}_{L\log L}(w)$.
\begin{defn}
Let $p=1$, $0<\kappa<1$ and $w$ be a weight function on $\mathbb R^n$. We denote by $\mathcal L^{1,\kappa}_{L\log L}(w)$ the weighted Morrey space of $L\log L$ type, the space of all locally integrable functions $f$ defined on $\mathbb R^n$ with finite norm $\big\|f\big\|_{\mathcal L^{1,\kappa}_{L\log L}(w)}$.
\begin{equation*}
\mathcal L^{1,\kappa}_{L\log L}(w):=\left\{f\in L^1_{loc}(w):\big\|f\big\|_{\mathcal L^{1,\kappa}_{L\log L}(w)}<\infty\right\},
\end{equation*}
where
\begin{equation*}
\big\|f\big\|_{\mathcal L^{1,\kappa}_{L\log L}(w)}
:=\sup_B\left\{w(B)^{1-\kappa}\cdot\big\|f\big\|_{L\log L(w),B}\right\}.
\end{equation*}
In this situation, we have $\mathcal L^{1,\kappa}_{L\log L}(w)\subset\mathcal L^{1,\kappa}(w)$.
\end{defn}

For the endpoint case, we will also prove the following weak type $L\log L$ estimate of the linear commutator $[b,T_{\theta}]$ in the Morrey type space associated to $\psi$.

\begin{theorem}\label{mainthm:4}
Let $p=1$, $w\in A_1$ and $b\in BMO(\mathbb R^n)$. Assume that $\theta$ satisfies $(\ref{theta2})$ and $\psi$ satisfies the $\mathcal D_\kappa$ condition $(\ref{D condition})$ with $0\leq\kappa<1$, then for any given $\sigma>0$ and any ball $B\subset\mathbb R^n$, there exists a constant $C>0$ independent of $f$, $B$ and $\sigma>0$ such that
\begin{equation*}
\frac{1}{\psi(w(B))}\cdot w\big(\big\{x\in B:\big|[b,T_{\theta}](f)(x)\big|>\sigma\big\}\big)
\leq C\cdot\bigg\|\Phi\left(\frac{|f|}{\,\sigma\,}\right)\bigg\|_{\mathcal M^{1,\psi}_{L\log L}(w)},
\end{equation*}
where $\Phi(t)=t\cdot(1+\log^+t)$. From the definitions, we can roughly say that the commutator operator $[b,T_{\theta}]$ is bounded from $\mathcal M^{1,\psi}_{L\log L}(w)$ into $W\mathcal M^{1,\psi}(w)$.
\end{theorem}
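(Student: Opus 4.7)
Fix a ball $B = B(x_0, r_B)$ and $\sigma>0$, and decompose $f = f_1 + f_2$ with $f_1 = f\chi_{2B}$. By sublinearity it suffices to bound $\frac{1}{\psi(w(B))}\,w(\{x\in B:|[b,T_\theta]f_i(x)|>\sigma/2\})$ for $i=1,2$ by a constant times $\|\Phi(|f|/\sigma)\|_{\mathcal M^{1,\psi}_{L\log L}(w)}$. The local part ($i=1$) I would handle directly by invoking Theorem \ref{commutator}, which yields
\begin{equation*}
w\big(\{x:|[b,T_\theta]f_1(x)|>\sigma/2\}\big)\leq C\int_{2B}\Phi(|f|/\sigma)\,w(x)\,dx\leq C\,w(2B)\,\|\Phi(|f|/\sigma)\|_{L\log L(w),2B}
\end{equation*}
by \eqref{main esti1}. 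Dividing by $\psi(w(B))$ and factoring out $\psi(w(2B))$, the right side becomes $(\psi(w(2B))/\psi(w(B)))\cdot(w(2B)/\psi(w(2B)))\,\|\Phi(|f|/\sigma)\|_{L\log L(w),2B}$; the first factor is uniformly bounded by $\mathcal D_\kappa$ and the doubling \eqref{weights} of $w$, while the rest is at most $\|\Phi(|f|/\sigma)\|_{\mathcal M^{1,\psi}_{L\log L}(w)}$.

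For the global part, the pointwise identity
\begin{equation*}
[b,T_\theta]f_2(x) = (b(x)-b_B)\,T_\theta f_2(x) - T_\theta\big((b-b_B)f_2\big)(x),\qquad x\in B,
\end{equation*}
combined with the size bound (i) and the annular decomposition $(2B)^c=\bigcup_{k\geq 1}(2^{k+1}B\setminus 2^{k}B)$ (on which $|x-y|\approx |x_0-y|\approx 2^k r_B$ for $x\in B$), produces two $x$-independent majorants
\begin{equation*}
J_1:=C\sum_{k=1}^\infty\frac{1}{|2^{k+1}B|}\int_{2^{k+1}B}|f(y)|\,dy,\quad J_2:=C\sum_{k=1}^\infty\frac{1}{|2^{k+1}B|}\int_{2^{k+1}B}|b(y)-b_B|\,|f(y)|\,dy,
\end{equation*}
satisfying $|T_\theta f_2(x)|\leq J_1$ and $|T_\theta((b-b_B)f_2)(x)|\leq J_2$ on $B$. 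Chebyshev then bounds the level set of $|T_\theta((b-b_B)f_2)|$ above $\sigma/4$ by $4J_2 w(B)/\sigma$, and, after pulling $J_1$ out, bounds the level set of $|b(x)-b_B|J_1$ above $\sigma/4$ by $(4J_1/\sigma)\int_B |b-b_B|w\,dx\leq C\|b\|_*J_1 w(B)/\sigma$, using the standard $A_\infty$ consequence of John--Nirenberg.

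It remains to dominate $w(B)J_i/(\sigma\psi(w(B)))$ by the Morrey $L\log L$ norm of $\Phi(|f|/\sigma)$. Since $t\leq \Phi(t)$ and $w\in A_1$ forces $1/|2^{k+1}B|\leq C\,w(y)/w(2^{k+1}B)$ a.e., each annular average in $J_1/\sigma$ is controlled by $C\,\|\Phi(|f|/\sigma)\|_{L\log L(w),2^{k+1}B}$. For $J_2/\sigma$, after the same conversion, I would apply the weighted generalized H\"older inequality \eqref{Wholder} with the pair $L\log L(w),\exp L(w)$, combined with the weighted John--Nirenberg bound $\|b-b_B\|_{\exp L(w),2^{k+1}B}\leq C(k+1)\|b\|_*$, which produces the same majorant with an extra factor $C(k+1)\|b\|_*$. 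Multiplying through by $w(B)/\psi(w(B))$ and applying $\mathcal D_\kappa$ to write $\psi(w(2^{k+1}B))/\psi(w(B))\leq C(w(2^{k+1}B)/w(B))^\kappa$, the $k$-th term acquires the factor $(w(B)/w(2^{k+1}B))^{1-\kappa}$. The main technical obstacle is the convergence of this series; it follows from the $A_\infty$ reverse-doubling estimate \eqref{compare}, giving $w(B)/w(2^{k+1}B)\leq C\,2^{-(k+1)n\delta}$ for some $\delta>0$, and this exponential decay (with $\kappa<1$) easily absorbs the $(k+1)$ factor from John--Nirenberg, summing to a constant multiple of $\|\Phi(|f|/\sigma)\|_{\mathcal M^{1,\psi}_{L\log L}(w)}$.
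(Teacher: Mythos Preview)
Your proposal is correct and follows essentially the same route as the paper's proof: the same local/global split, the same use of Theorem~\ref{commutator} together with \eqref{main esti1} for $f_1$, the same pointwise decomposition of $[b,T_\theta]f_2$ into the two pieces $(b-b_B)T_\theta f_2$ and $T_\theta((b-b_B)f_2)$, and the same endgame via the $\mathcal D_\kappa$ condition and the $A_\infty$ estimate \eqref{compare} to sum the series $\sum_k (k+1)(w(B)/w(2^{k+1}B))^{1-\kappa}$. The only cosmetic difference is that the paper first splits $b(y)-b_B=(b(y)-b_{2^{k+1}B})+(b_{2^{k+1}B}-b_B)$ and treats the two resulting sums separately (applying \eqref{Jensen} to the first and Lemma~\ref{BMO}(i) to the second), whereas you absorb both into the single bound $\|b-b_B\|_{\exp L(w),2^{k+1}B}\le C(k+1)\|b\|_*$; this is equivalent and slightly more compact.
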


In particular, if we take $\psi(x)=x^\kappa$ with $0<\kappa<1$, then we immediately
get the following strong type estimate and endpoint estimate of $T_{\theta}$ and $[b,T_{\theta}]$ in the weighted Morrey spaces $\mathcal L^{p,\kappa}(w)$ for all $0<\kappa<1$ and $1\leq p<\infty$.

\begin{corollary}\label{cor:1}
Let $1<p<\infty$, $0<\kappa<1$ and $w\in A_p$. Then the $\theta$-type Calder\'on--Zygmund operator $T_{\theta}$ is bounded on $\mathcal L^{p,\kappa}(w)$.
\end{corollary}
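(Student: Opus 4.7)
The plan is to derive Corollary \ref{cor:1} as an immediate specialization of Theorem \ref{mainthm:1}. First, I would fix the choice $\psi(x)=x^\kappa$ with $0<\kappa<1$ and verify that this $\psi$ satisfies the $\mathcal D_\kappa$ condition (\ref{D condition}): indeed, the ratio $\psi(\xi)/\xi^\kappa$ equals $1$ identically for all $\xi>0$, so (\ref{D condition}) holds trivially with constant $C=1$ for every $0<\xi'<\xi<+\infty$.

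Next, I would observe that under this choice the generalized weighted Morrey space from Definition \ref{WMorrey} reduces exactly to the Komori--Shirai weighted Morrey space $\mathcal L^{p,\kappa}(w)$. Indeed, for any ball $B\subset\mathbb R^n$ we have
\[
\psi(w(B))=w(B)^\kappa,
\]
so the defining norms coincide, namely $\|f\|_{\mathcal M^{p,\psi}(w)}=\|f\|_{\mathcal L^{p,\kappa}(w)}$, and the two spaces are equal as sets with identical norms.

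Finally, applying Theorem \ref{mainthm:1} to the admissible function $\psi(x)=x^\kappa$ with the same $1<p<\infty$ and $w\in A_p$ yields the boundedness of $T_\theta$ on $\mathcal M^{p,\psi}(w)=\mathcal L^{p,\kappa}(w)$, which is exactly Corollary \ref{cor:1}. Since the Morrey type space $\mathcal M^{p,\psi}(w)$ has been designed precisely to encompass $\mathcal L^{p,\kappa}(w)$ as the special case $\psi(x)=x^\kappa$, there is no genuine obstacle in the deduction; all substantive analytic work is already contained in the proof of Theorem \ref{mainthm:1}, and the corollary is a purely formal consequence.
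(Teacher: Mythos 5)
Your proposal is correct and coincides with the paper's own derivation: the corollary is obtained exactly by specializing Theorem \ref{mainthm:1} to $\psi(x)=x^\kappa$, which trivially satisfies the $\mathcal D_\kappa$ condition and turns $\mathcal M^{p,\psi}(w)$ into $\mathcal L^{p,\kappa}(w)$. No issues.
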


\begin{corollary}\label{cor:2}
Let $p=1$, $0<\kappa<1$ and $w\in A_1$. Then the $\theta$-type Calder\'on--Zygmund operator $T_{\theta}$ is bounded from $\mathcal L^{1,\kappa}(w)$ into $W\mathcal L^{1,\kappa}(w)$.
\end{corollary}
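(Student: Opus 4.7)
The plan is to derive Corollary \ref{cor:2} as a direct specialization of Theorem \ref{mainthm:2}. The only nontrivial thing to verify is that the choice $\psi(x)=x^\kappa$ with $0<\kappa<1$ fits into the framework of Theorem \ref{mainthm:2}, and that under this choice the generalized weighted Morrey spaces $\mathcal M^{1,\psi}(w)$ and $W\mathcal M^{1,\psi}(w)$ coincide with the classical weighted spaces $\mathcal L^{1,\kappa}(w)$ and $W\mathcal L^{1,\kappa}(w)$ respectively.

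First I would check that $\psi(x)=x^\kappa$ satisfies the $\mathcal D_\kappa$ condition (\ref{D condition}). This is immediate, since $\psi(\xi)/\xi^\kappa=1$ for every $\xi>0$, so the ratio $\psi(\xi)/\xi^\kappa$ is constant and the required monotonicity estimate holds (with constant $C=1$). Hence Theorem \ref{mainthm:2} may be applied with this particular $\psi$.

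Next I would identify the two Morrey-type norms. Inserting $\psi(w(B))=w(B)^\kappa$ into the definition of $\mathcal M^{1,\psi}(w)$ from Definition \ref{WMorrey} yields
\[
\|f\|_{\mathcal M^{1,\psi}(w)}=\sup_B\frac{1}{w(B)^\kappa}\int_B |f(x)|\,w(x)\,dx,
\]
which is exactly $\|f\|_{\mathcal L^{1,\kappa}(w)}$, and analogously for the weak-type norm one recovers the weighted weak Morrey norm defining $W\mathcal L^{1,\kappa}(w)$. This matches the equivalence already advertised in the bullet list after Definition \ref{WMorrey}.

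Combining these two observations, the weak $(1,1)$ Morrey-type bound provided by Theorem \ref{mainthm:2} reads, with $\psi(x)=x^\kappa$, precisely as the statement that $T_\theta$ maps $\mathcal L^{1,\kappa}(w)$ boundedly into $W\mathcal L^{1,\kappa}(w)$ for $w\in A_1$ and $0<\kappa<1$. There is no genuine obstacle here; the corollary is a clean specialization, and the only care needed is the routine verification that the $\mathcal D_\kappa$ condition is satisfied by power functions, which we just did.
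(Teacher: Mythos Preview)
Your proposal is correct and follows exactly the paper's own approach: the paper states the corollary as an immediate specialization of Theorem \ref{mainthm:2} upon taking $\psi(x)=x^\kappa$ with $0<\kappa<1$, and your verification of the $\mathcal D_\kappa$ condition and the identification of the norms makes this explicit.
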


\begin{corollary}\label{cor:3}
Let $1<p<\infty$, $0<\kappa<1$, $w\in A_p$ and $b\in BMO(\mathbb R^n)$. Assume that $\theta$ satisfies $(\ref{theta2})$, then the commutator operator $[b,T_{\theta}]$ is bounded on $\mathcal L^{p,\kappa}(w)$.
\end{corollary}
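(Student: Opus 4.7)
The plan is to derive Corollary \ref{cor:3} as an immediate specialization of Theorem \ref{mainthm:3}, once we verify that the weighted Morrey space $\mathcal L^{p,\kappa}(w)$ is literally the Morrey type space $\mathcal M^{p,\psi}(w)$ for the particular choice $\psi(x)=x^\kappa$. No new operator-theoretic estimates should be needed; everything has been done at the more general level of $\psi$.

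First I would observe that the function $\psi(x)=x^\kappa$ with $0<\kappa<1$ satisfies the $\mathcal D_\kappa$ condition (\ref{D condition}) trivially: for any $0<\xi'<\xi<+\infty$ one has $\psi(\xi)/\xi^\kappa = 1 = \psi(\xi')/(\xi')^\kappa$, so (\ref{D condition}) holds with constant $C=1$. Next, comparing the two norms, for this $\psi$ we have
\begin{equation*}
\big\|f\big\|_{\mathcal M^{p,\psi}(w)}
=\sup_B\left(\frac{1}{w(B)^\kappa}\int_B|f(x)|^p w(x)\,dx\right)^{1/p}
=\big\|f\big\|_{\mathcal L^{p,\kappa}(w)},
\end{equation*}
so the underlying function spaces coincide as normed spaces.

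Having identified $\mathcal L^{p,\kappa}(w)=\mathcal M^{p,\psi}(w)$, I would then simply invoke Theorem \ref{mainthm:3}: under the standing hypotheses $1<p<\infty$, $w\in A_p$, $b\in BMO(\mathbb R^n)$ and $\theta$ satisfying (\ref{theta2}), that theorem yields a constant $C>0$ such that
\begin{equation*}
\big\|[b,T_\theta]f\big\|_{\mathcal M^{p,\psi}(w)}\leq C\big\|f\big\|_{\mathcal M^{p,\psi}(w)}.
\end{equation*}
Rewriting both sides in the $\mathcal L^{p,\kappa}(w)$ notation gives the required boundedness.

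The only potential subtlety—and the one step worth checking carefully—is the verification of the $\mathcal D_\kappa$ condition for $\psi(x)=x^\kappa$ at the exact same value of $\kappa$ used to define $\mathcal L^{p,\kappa}(w)$; any mismatch in the exponent would break the reduction. Since the choice $\psi(x)=x^\kappa$ manifestly matches, there is no real obstacle, and the corollary follows at once from Theorem \ref{mainthm:3}.
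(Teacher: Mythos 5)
Your proposal is correct and matches the paper's approach exactly: the paper obtains Corollary \ref{cor:3} by specializing Theorem \ref{mainthm:3} to $\psi(x)=x^\kappa$, noting (as you do) that this $\psi$ satisfies the $\mathcal D_\kappa$ condition and that $\mathcal M^{p,\psi}(w)$ then coincides with $\mathcal L^{p,\kappa}(w)$. Nothing further is needed.
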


\begin{corollary}\label{cor:4}
Let $p=1$, $0<\kappa<1$, $w\in A_1$ and $b\in BMO(\mathbb R^n)$. Assume that $\theta$ satisfies $(\ref{theta2})$, then for any given $\sigma>0$ and any ball $B\subset\mathbb R^n$, there exists a constant $C>0$ independent of $f$, $B$ and $\sigma>0$ such that
\begin{equation*}
\frac{1}{w(B)^{\kappa}}\cdot w\big(\big\{x\in B:\big|[b,T_{\theta}](f)(x)\big|>\sigma\big\}\big)
\leq C\cdot\bigg\|\Phi\left(\frac{|f|}{\,\sigma\,}\right)\bigg\|_{\mathcal L^{1,\kappa}_{L\log L}(w)},
\end{equation*}
where $\Phi(t)=t\cdot(1+\log^+t)$.
\end{corollary}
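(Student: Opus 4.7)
The plan is to obtain Corollary \ref{cor:4} as an immediate specialization of Theorem \ref{mainthm:4} with the choice $\psi(x)=x^{\kappa}$, $0<\kappa<1$. The strategy is therefore purely formal: verify that $\psi(x)=x^{\kappa}$ is an admissible weight for Theorem \ref{mainthm:4}, and check that under this choice the generalized $L\log L$-type Morrey norm reduces to $\|\cdot\|_{\mathcal L^{1,\kappa}_{L\log L}(w)}$.

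First I would verify that $\psi(x)=x^{\kappa}$ satisfies the $\mathcal D_\kappa$ condition $(\ref{D condition})$. This is trivial since the ratio $\psi(\xi)/\xi^{\kappa}\equiv 1$, so the defining inequality holds with constant $C=1$ for all $0<\xi'<\xi<+\infty$. Together with the hypotheses $w\in A_1$, $b\in BMO(\mathbb R^n)$ and $\theta$ satisfying $(\ref{theta2})$, every assumption of Theorem \ref{mainthm:4} is met (the case $\kappa=0$ that Theorem \ref{mainthm:4} also allows is simply not invoked here).

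Next I would identify the two norms. When $\psi(x)=x^{\kappa}$, one has
\[
\frac{w(B)}{\psi(w(B))}=\frac{w(B)}{w(B)^{\kappa}}=w(B)^{1-\kappa},
\]
so the defining quantity for $\mathcal M^{1,\psi}_{L\log L}(w)$ coincides ball-by-ball with $w(B)^{1-\kappa}\cdot\|f\|_{L\log L(w),B}$. Taking supremum over all balls $B\subset\mathbb R^n$ therefore gives $\|f\|_{\mathcal M^{1,\psi}_{L\log L}(w)}=\|f\|_{\mathcal L^{1,\kappa}_{L\log L}(w)}$, i.e.\ the two Banach function spaces coincide with equal norms.

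Finally I would substitute into the conclusion of Theorem \ref{mainthm:4}. Since $\psi(w(B))=w(B)^{\kappa}$, the left-hand side of the inequality in Theorem \ref{mainthm:4} becomes
\[
\frac{1}{w(B)^{\kappa}}\cdot w\bigl(\{x\in B:|[b,T_{\theta}](f)(x)|>\sigma\}\bigr),
\]
while the right-hand side becomes $C\bigl\|\Phi(|f|/\sigma)\bigr\|_{\mathcal L^{1,\kappa}_{L\log L}(w)}$ by the identification above. This is exactly the statement of Corollary \ref{cor:4}. There is no analytic obstacle in this proof — all the work is concentrated in Theorem \ref{mainthm:4} itself — so the only point to be careful about is the bookkeeping between the two norm definitions, which is what the second step above settles.
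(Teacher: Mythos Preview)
Your proposal is correct and matches the paper's own treatment: the paper presents Corollary~\ref{cor:4} as an immediate specialization of Theorem~\ref{mainthm:4} under the choice $\psi(x)=x^{\kappa}$, without a separate proof. Your verification that $\psi(x)=x^{\kappa}$ satisfies the $\mathcal D_\kappa$ condition and your identification of $\mathcal M^{1,\psi}_{L\log L}(w)$ with $\mathcal L^{1,\kappa}_{L\log L}(w)$ simply make explicit the bookkeeping the paper leaves implicit.
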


Naturally, when $w(x)\equiv 1$ we have the following unweighted results.
\begin{corollary}\label{cor:5}
Let $1<p<\infty$. Assume that $\psi$ satisfies the $\mathcal D_\kappa$ condition $(\ref{D condition})$ with $0\leq\kappa<1$, then the $\theta$-type Calder\'on--Zygmund operator $T_{\theta}$ is bounded on $\mathcal M^{p,\psi}(\mathbb R^n)$.
\end{corollary}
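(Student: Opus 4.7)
The statement is nothing more than the specialization of Theorem~\ref{mainthm:1} to the constant weight. My plan is therefore to deduce it as a direct corollary: take $w(x)\equiv 1$ in Theorem~\ref{mainthm:1}. One only needs to observe that the constant function $1$ belongs to $A_p$ for every $1<p<\infty$, and that in this case $w(B)=|B|$ for every ball, so the norm in Definition~\ref{WMorrey} collapses to the unweighted Morrey norm
\begin{equation*}
\|f\|_{\mathcal M^{p,\psi}}=\sup_B\left(\frac{1}{\psi(|B|)}\int_B|f(x)|^p\,dx\right)^{1/p}.
\end{equation*}
Thus Theorem~\ref{mainthm:1} yields $\|T_\theta f\|_{\mathcal M^{p,\psi}(\mathbb R^n)}\le C\|f\|_{\mathcal M^{p,\psi}(\mathbb R^n)}$ with the constant depending only on $n$, $p$, $\theta$, and the $\mathcal D_\kappa$ constant of $\psi$.

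Should one prefer a self-contained argument, the natural route is to mimic the scheme that proves Theorem~\ref{mainthm:1} in the unweighted setting. Fix a ball $B=B(x_0,r)$ and split $f=f\chi_{2B}+f\chi_{(2B)^c}=:f_1+f_2$. For $f_1$ one invokes the unweighted $L^p$-boundedness of $T_\theta$ (the $w\equiv 1$ case of Theorem~\ref{strongweak}) to obtain $\|T_\theta f_1\|_{L^p(B)}\le C\|f\|_{L^p(2B)}$, and then the $\mathcal D_\kappa$ condition on $\psi$ absorbs the passage from $2B$ back to $B$ in the Morrey normalization. For $f_2$, the pointwise size bound $|K(x,y)|\le C|x-y|^{-n}$ together with the annular decomposition $(2B)^c=\bigcup_{k\ge 1}(2^{k+1}B\setminus 2^k B)$ and H\"older's inequality gives a geometric-type series in $k$ whose convergence is again ensured by the $\mathcal D_\kappa$ growth of $\psi$.

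No genuine obstacle arises: the corollary is recorded merely to emphasize the unweighted consequence, and the only verification required is the compatibility of the two Morrey-norm definitions when $w\equiv 1$, which is immediate from the definitions.
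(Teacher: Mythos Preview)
Your proposal is correct and matches the paper's own treatment exactly: the paper records this corollary without a separate proof, introducing it with the remark ``Naturally, when $w(x)\equiv 1$ we have the following unweighted results,'' so it is obtained precisely by specializing Theorem~\ref{mainthm:1} to $w\equiv1$. Your observation that $1\in A_p$ and $w(B)=|B|$ is all that is needed, and your optional self-contained sketch simply reproduces the proof of Theorem~\ref{mainthm:1} in the unweighted case.
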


\begin{corollary}\label{cor:6}
Let $p=1$. Assume that $\psi$ satisfies the $\mathcal D_\kappa$ condition $(\ref{D condition})$ with $0\leq\kappa<1$, then the $\theta$-type Calder\'on--Zygmund operator $T_{\theta}$ is bounded from $\mathcal M^{1,\psi}(\mathbb R^n)$ into $W\mathcal M^{1,\psi}(\mathbb R^n)$.
\end{corollary}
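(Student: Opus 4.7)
The plan is to derive Corollary \ref{cor:6} as an immediate specialization of Theorem \ref{mainthm:2} by taking the weight $w(x)\equiv 1$. The strategy is purely formal: verify that the constant weight $1$ fits into the hypotheses of Theorem \ref{mainthm:2}, and then identify the weighted spaces appearing in its conclusion with the unweighted spaces appearing in Corollary \ref{cor:6}.

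First, I would observe that the constant weight $w\equiv 1$ trivially belongs to the Muckenhoupt class $A_1$, since for every ball $B$ one has $\frac{1}{|B|}\int_B 1\,dx = 1 = \operatorname*{ess\,inf}_{x\in B} 1$. Consequently, the weighted measure collapses to Lebesgue measure, $w(B)=|B|$ for every ball $B\subset\mathbb R^n$. Comparing Definition \ref{WMorrey} with the immediately following unweighted definition, the norms coincide:
\begin{equation*}
\sup_B\left(\frac{1}{\psi(w(B))}\int_B|f(x)|^p w(x)\,dx\right)^{1/p}
= \sup_B\left(\frac{1}{\psi(|B|)}\int_B|f(x)|^p\,dx\right)^{1/p},
\end{equation*}
and analogously for the weak versions. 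Thus $\mathcal M^{1,\psi}(w)=\mathcal M^{1,\psi}(\mathbb R^n)$ and $W\mathcal M^{1,\psi}(w)=W\mathcal M^{1,\psi}(\mathbb R^n)$ isometrically.

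With these identifications in place, applying Theorem \ref{mainthm:2} to $w\equiv 1\in A_1$ and the given $\psi$ (which still satisfies the $\mathcal D_\kappa$ condition, as that condition does not involve $w$) yields precisely the stated mapping property of $T_\theta$ from $\mathcal M^{1,\psi}(\mathbb R^n)$ into $W\mathcal M^{1,\psi}(\mathbb R^n)$. The standing assumption on $\theta$ (condition (\ref{theta1})) is inherited from the running convention.

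There is no real obstacle here; this is a formal corollary whose entire content is the verification of the two identifications above. Since the deduction contains no nontrivial analytic step beyond what is already in Theorem \ref{mainthm:2}, I expect the proof in the paper to be stated in one line, perhaps simply remarking that the result follows by taking $w\equiv 1$.
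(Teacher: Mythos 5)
Your proposal is correct and matches the paper exactly: the paper derives this corollary by simply specializing Theorem \ref{mainthm:2} to $w\equiv 1$, noting that the constant weight is in $A_1$ and that $w(B)=|B|$ makes the weighted and unweighted Morrey norms coincide. No further argument is given or needed.
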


\begin{corollary}\label{cor:7}
Let $1<p<\infty$ and $b\in BMO(\mathbb R^n)$. Assume that $\theta$ satisfies $(\ref{theta2})$ and $\psi$ satisfies the $\mathcal D_\kappa$ condition $(\ref{D condition})$ with $0\leq\kappa<1$, then the commutator operator $[b,T_{\theta}]$ is bounded on $\mathcal M^{p,\psi}(\mathbb R^n)$.
\end{corollary}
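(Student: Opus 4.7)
The plan is to derive Corollary~\ref{cor:7} as an immediate specialization of Theorem~\ref{mainthm:3} by taking the weight to be the constant function $w\equiv 1$. First I would verify that the constant weight belongs to the Muckenhoupt class $A_1$, and therefore to every $A_p$ with $1\le p<\infty$: this is immediate since both the $A_p$ and $A_1$ conditions recalled in Section~2.1 hold with constant $C=1$. Consequently, Theorem~\ref{mainthm:3} applies with this choice of weight.

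Next I would identify the function spaces. For $w\equiv 1$ one has $w(B)=|B|$ for every ball, and the weighted $L^p$ integral reduces to the ordinary Lebesgue integral. Comparing the norms in Definition~\ref{WMorrey} with those of the unweighted space $\mathcal M^{p,\psi}(\mathbb R^n)$, this yields $\mathcal M^{p,\psi}(w)=\mathcal M^{p,\psi}(\mathbb R^n)$ with identical norms. Since the remaining hypotheses of Corollary~\ref{cor:7} --- $1<p<\infty$, $b\in BMO(\mathbb R^n)$, $\theta$ satisfying (\ref{theta2}), and $\psi$ satisfying the $\mathcal D_\kappa$ condition (\ref{D condition}) --- are exactly the hypotheses of Theorem~\ref{mainthm:3} with $w\equiv 1$, the asserted boundedness of $[b,T_\theta]$ on $\mathcal M^{p,\psi}(\mathbb R^n)$ follows at once.

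There is no substantive obstacle in this corollary beyond the observation that constants are $A_1$ weights. All of the technical work --- a splitting of $f$ into local and far parts relative to each ball, the generalized H\"older inequality (\ref{holder}), the weighted $L^p$ bound for $[b,T_\theta]$ obtained by combining Theorem~\ref{strongweak} with $BMO$ estimates, and the summation over dyadic annuli that exploits the stronger regularity condition (\ref{theta2}) on $\theta$ --- is performed in the proof of Theorem~\ref{mainthm:3}. Corollary~\ref{cor:7} simply inherits the resulting norm estimate with $w$ specialized to $1$, and the constant in the bound depends only on $n$, $p$, $\|b\|_\ast$, the function $\theta$, and the $\mathcal D_\kappa$ constant of $\psi$.
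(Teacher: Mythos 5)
Your proposal is correct and matches the paper's route exactly: the paper obtains Corollary \ref{cor:7} precisely by specializing Theorem \ref{mainthm:3} to the constant weight $w\equiv 1$, noting that constants lie in $A_1\subset A_p$ and that $w(B)=|B|$ makes $\mathcal M^{p,\psi}(w)$ coincide with $\mathcal M^{p,\psi}(\mathbb R^n)$. No gaps.
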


\begin{corollary}\label{cor:8}
Let $p=1$ and $b\in BMO(\mathbb R^n)$. Assume that $\theta$ satisfies $(\ref{theta2})$ and $\psi$ satisfies the $\mathcal D_\kappa$ condition $(\ref{D condition})$ with $0\leq\kappa<1$, then for any given $\sigma>0$ and any ball $B\subset\mathbb R^n$, there exists a constant $C>0$ independent of $f$, $B$ and $\sigma>0$ such that
\begin{equation*}
\frac{1}{\psi(|B|)}\cdot\Big|\big\{x\in B:\big|[b,T_{\theta}](f)(x)\big|>\sigma\big\}\Big|
\leq C\cdot\bigg\|\Phi\left(\frac{|f|}{\,\sigma\,}\right)\bigg\|_{\mathcal M^{1,\psi}_{L\log L}(\mathbb R^n)},
\end{equation*}
where $\Phi(t)=t\cdot(1+\log^+t)$.
\end{corollary}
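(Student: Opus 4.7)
The plan is to obtain Corollary \ref{cor:8} as the direct $w\equiv 1$ specialization of Theorem \ref{mainthm:4}. First I would verify that the constant weight $w\equiv 1$ lies in $A_1$ (both sides of the $A_1$ inequality are equal to $1$) and note that the remaining hypotheses of Theorem \ref{mainthm:4}---namely $\theta$ satisfying (\ref{theta2}), $\psi$ satisfying the $\mathcal D_\kappa$ condition (\ref{D condition}), and $b\in BMO(\mathbb R^n)$---are precisely the hypotheses of Corollary \ref{cor:8}, so Theorem \ref{mainthm:4} is applicable.

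The second step is simply to unfold the weighted quantities appearing in the conclusion of Theorem \ref{mainthm:4} under this choice. Since $w(E)=|E|$ for every measurable $E$, we have $\psi(w(B))=\psi(|B|)$ on the left-hand side and
\begin{equation*}
w\big(\big\{x\in B:\big|[b,T_\theta](f)(x)\big|>\sigma\big\}\big)=\big|\big\{x\in B:\big|[b,T_\theta](f)(x)\big|>\sigma\big\}\big|.
\end{equation*}
On the right-hand side, the weighted Luxemburg average $\|\cdot\|_{L\log L(w),B}$ collapses to the ordinary Luxemburg average $\|\cdot\|_{L\log L,B}$, and consequently $\|\cdot\|_{\mathcal M^{1,\psi}_{L\log L}(w)}$ reduces by definition to $\|\cdot\|_{\mathcal M^{1,\psi}_{L\log L}(\mathbb R^n)}$. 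Substituting these identifications into the conclusion of Theorem \ref{mainthm:4} yields exactly the inequality stated in Corollary \ref{cor:8}.

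Because the deduction is purely a specialization of a previously established result, there is no substantive obstacle. The only care required is a side-by-side comparison of Definition \ref{WMorrey} with its unweighted analogue, and of the weighted generalized Hölder inequality (\ref{Wholder}) with its unweighted version (\ref{holder}), to confirm that each piece of the weighted statement truly reduces to its unweighted counterpart when $w\equiv 1$. Once this bookkeeping is in place, Corollary \ref{cor:8} follows from Theorem \ref{mainthm:4} with no further argument.
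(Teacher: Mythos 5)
Your proposal is correct and is exactly how the paper obtains this corollary: the paper derives Corollaries 2.5--2.8 by specializing the weighted theorems to $w\equiv 1$, and your verification that $w\equiv 1\in A_1$ together with the unfolding of $w(B)=|B|$, $\|\cdot\|_{L\log L(w),B}=\|\cdot\|_{L\log L,B}$, and $\mathcal M^{1,\psi}_{L\log L}(w)=\mathcal M^{1,\psi}_{L\log L}(\mathbb R^n)$ is precisely the required bookkeeping. Nothing further is needed.
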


Let $\Psi=\Psi(r)$, $r>0$, be a growth function with doubling constant $D(\Psi):1\le D(\Psi)<2^n$. If for any fixed $x_0\in\mathbb R^n$ and $r>0$, we set $\psi(|B(x_0,r)|)=\Psi(r)$. Then
\begin{equation*}
\psi\big(2^n|B(x_0,r)|\big)=\psi\big(|B(x_0,2r)|\big)=\Psi(2r).
\end{equation*}
For the doubling constant $D(\Psi)$ satisfying $1\le D(\Psi)<2^n$, which means that $D(\Psi)=2^{\kappa\cdot n}$ for some $0\leq\kappa<1$, then we are able to verify that $\psi$ is an increasing function and satisfies the $\mathcal D_\kappa$ condition $(\ref{D condition})$ with some $0\leq\kappa<1$.

\begin{defn}
Let $p=1$ and $\Psi$ be a growth function in $(0,+\infty)$. We denote by $\mathcal L^{1,\Psi}_{L\log L}(\mathbb R^n)$ the generalized Morrey space of $L\log L$ type, which is defined by
\begin{equation*}
\mathcal L^{1,\Psi}_{L\log L}(\mathbb R^n):=\left\{f\in L^1_{loc}(\mathbb R^n):
\big\|f\big\|_{\mathcal L^{1,\Psi}_{L\log L}(\mathbb R^n)}<\infty\right\},
\end{equation*}
where
\begin{equation*}
\big\|f\big\|_{\mathcal L^{1,\Psi}_{L\log L}(\mathbb R^n)}
:=\sup_{r>0;B(x_0,r)}\left\{\frac{|B(x_0,r)|}{\Psi(r)}\cdot\big\|f\big\|_{L\log L,B(x_0,r)}\right\}.
\end{equation*}
In this situation, we also have $\mathcal L^{1,\Psi}_{L\log L}(\mathbb R^n)\subset\mathcal L^{1,\Psi}(\mathbb R^n)$.
\end{defn}
From the definitions given above, we get $\mathcal M^{p,\psi}(\mathbb R^n)=\mathcal L^{p,\Psi}(\mathbb R^n)$, $W\mathcal M^{1,\psi}(\mathbb R^n)=W\mathcal L^{1,\Psi}(\mathbb R^n)$ and $\mathcal M^{1,\psi}_{L\log L}(\mathbb R^n)=\mathcal L^{1,\Psi}_{L\log L}(\mathbb R^n)$ by the choice of $\Psi$.
Thus, by the above unweighted results (Corollaries 2.5--2.8), we can also obtain strong type estimate and endpoint estimate of $T_{\theta}$ and $[b,T_{\theta}]$ in the generalized Morrey spaces $\mathcal L^{p,\Psi}(\mathbb R^n)$ when $1\leq p<\infty$ and $\Psi$ satisfies the doubling condition $(\ref{doubling})$.

\begin{corollary}\label{cor:5}
Let $1<p<\infty$. Suppose that $\Psi$ satisfies the doubling condition $(\ref{doubling})$ and $1\le D(\Psi)<2^n$, then the $\theta$-type Calder\'on--Zygmund operator $T_{\theta}$ is bounded on $\mathcal L^{p,\Psi}(\mathbb R^n)$.
\end{corollary}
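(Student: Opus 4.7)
The plan is to reduce this corollary to the already-established unweighted boundedness result (the earlier Corollary~\ref{cor:5} in the $w\equiv 1$ block) by showing that, under the hypotheses on $\Psi$, the classical generalized Morrey space $\mathcal L^{p,\Psi}(\mathbb R^n)$ coincides with a space of the form $\mathcal M^{p,\psi}(\mathbb R^n)$ for a suitable $\psi$ satisfying the $\mathcal D_\kappa$ condition (\ref{D condition}) with some $0\le\kappa<1$. The construction of $\psi$ is already indicated in the paragraph preceding the statement; what remains is to verify the $\mathcal D_\kappa$ property and the identification of norms.

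First I would define $\psi$ by setting $\psi(|B(x_0,r)|):=\Psi(r)$, that is, $\psi(t):=\Psi\bigl((t/c_n)^{1/n}\bigr)$ where $c_n=|B(0,1)|$. Since $\Psi$ is a positive increasing function on $(0,+\infty)$, $\psi$ is a positive increasing function on $(0,+\infty)$ as well. The hypothesis $1\le D(\Psi)<2^n$ lets me write $D(\Psi)=2^{\kappa n}$ with $0\le\kappa<1$, and the doubling of $\Psi$ translates to
\begin{equation*}
\psi(2^n t)=\Psi\bigl(2(t/c_n)^{1/n}\bigr)\le D(\Psi)\,\Psi\bigl((t/c_n)^{1/n}\bigr)=2^{\kappa n}\psi(t).
\end{equation*}

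The key step is checking the $\mathcal D_\kappa$ inequality (\ref{D condition}). Given $0<\xi'<\xi<\infty$, pick the integer $k\ge 0$ with $2^{kn}\xi'\le\xi<2^{(k+1)n}\xi'$. Iterating the above doubling estimate $k+1$ times and using monotonicity of $\psi$, I obtain
\begin{equation*}
\psi(\xi)\le\psi\bigl(2^{(k+1)n}\xi'\bigr)\le 2^{\kappa n(k+1)}\psi(\xi'),
\end{equation*}
while $\xi^\kappa\ge 2^{kn\kappa}(\xi')^\kappa$. Dividing yields
\begin{equation*}
\frac{\psi(\xi)}{\xi^\kappa}\le\frac{2^{\kappa n(k+1)}}{2^{kn\kappa}}\cdot\frac{\psi(\xi')}{(\xi')^\kappa}=2^{\kappa n}\,\frac{\psi(\xi')}{(\xi')^\kappa},
\end{equation*}
so $\psi$ satisfies (\ref{D condition}) with constant $C=2^{\kappa n}$ and exponent $\kappa\in[0,1)$.

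Finally, for any ball $B=B(x_0,r)$ we have $|B|=c_n r^n$ and $\psi(|B|)=\Psi(r)$ by construction, so the defining norms of $\mathcal M^{p,\psi}(\mathbb R^n)$ and $\mathcal L^{p,\Psi}(\mathbb R^n)$ are literally equal; this is exactly the identification stated in the paragraph above the corollary. Invoking the unweighted Corollary (the $w\equiv 1$ case asserting that $T_\theta$ is bounded on $\mathcal M^{p,\psi}(\mathbb R^n)$ whenever $1<p<\infty$ and $\psi$ obeys $\mathcal D_\kappa$ with $0\le\kappa<1$) then gives boundedness of $T_\theta$ on $\mathcal L^{p,\Psi}(\mathbb R^n)$, completing the proof. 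The only mildly nontrivial step is the iteration argument that converts the doubling bound $\psi(2^n t)\le 2^{\kappa n}\psi(t)$ into the scale-invariant $\mathcal D_\kappa$ inequality for arbitrary pairs $\xi'<\xi$; everything else is a direct transcription of definitions.
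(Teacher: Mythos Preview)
Your proof is correct and follows exactly the route the paper takes: define $\psi$ via $\psi(|B(x_0,r)|)=\Psi(r)$, verify the $\mathcal D_\kappa$ condition with $\kappa$ determined by $D(\Psi)=2^{\kappa n}$, identify $\mathcal M^{p,\psi}(\mathbb R^n)=\mathcal L^{p,\Psi}(\mathbb R^n)$, and then invoke the unweighted corollary. In fact you supply more detail than the paper does, since the paper only asserts ``we are able to verify that $\psi$ \ldots\ satisfies the $\mathcal D_\kappa$ condition'' without writing out the iteration argument you give.
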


\begin{corollary}\label{cor:6}
Let $p=1$.Suppose that $\Psi$ satisfies the doubling condition $(\ref{doubling})$ and $1\le D(\Psi)<2^n$, then the $\theta$-type Calder\'on--Zygmund operator $T_{\theta}$ is bounded from $\mathcal L^{1,\Psi}(\mathbb R^n)$ into $W\mathcal L^{1,\Psi}(\mathbb R^n)$.
\end{corollary}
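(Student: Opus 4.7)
The plan is to derive this statement as a direct consequence of the unweighted $p=1$ endpoint result for $T_\theta$ on $\mathcal M^{1,\psi}(\mathbb R^n)$ (the corollary established just above in the unweighted case), by identifying $\mathcal L^{1,\Psi}(\mathbb R^n)$ with $\mathcal M^{1,\psi}(\mathbb R^n)$ for an appropriately chosen $\psi$. Concretely, I would set $\psi(c_n r^n) := \Psi(r)$, where $c_n=|B(0,1)|$, so that $\psi(|B(x_0,r)|)=\Psi(r)$ holds for every ball. Since $\Psi$ is positive and increasing, so is $\psi$.

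The core of the argument is to translate the doubling condition (\ref{doubling}) on $\Psi$ into the $\mathcal D_\kappa$ condition (\ref{D condition}) on $\psi$. The hypothesis $1\le D(\Psi)<2^n$ lets me write $D(\Psi)=2^{\kappa n}$ for some $0\le\kappa<1$, so that $\Psi(2r)\le 2^{\kappa n}\Psi(r)$. For any $0<r'<r$, I would pick the smallest integer $k\ge 1$ with $r\le 2^k r'$; iterating the doubling inequality $k$ times yields $\Psi(r)\le \Psi(2^k r')\le 2^{\kappa n k}\Psi(r')$, while the minimality of $k$ gives $2^{k-1}r'<r$, whence $2^{\kappa n k}\le 2^{\kappa n}(r/r')^{\kappa n}$. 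Combining, $\Psi(r)/r^{\kappa n}\le 2^{\kappa n}\Psi(r')/(r')^{\kappa n}$, which after reparameterizing through $\xi=c_n r^n$ and $\xi'=c_n (r')^n$ is precisely condition (\ref{D condition}) for $\psi$.

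With $\psi$ satisfying $\mathcal D_\kappa$ for some $0\le\kappa<1$, the equality $\psi(|B(x_0,r)|)=\Psi(r)$ makes the norms of the two spaces identical: $\|f\|_{\mathcal L^{1,\Psi}}=\|f\|_{\mathcal M^{1,\psi}}$ and $\|f\|_{W\mathcal L^{1,\Psi}}=\|f\|_{W\mathcal M^{1,\psi}}$. The desired boundedness then follows at once by applying the unweighted $p=1$ corollary for $T_\theta$ on $\mathcal M^{1,\psi}(\mathbb R^n)$ to this particular $\psi$.

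The only step demanding attention is the verification of the $\mathcal D_\kappa$ condition; everything else is a matter of unpacking definitions. The strict inequality $D(\Psi)<2^n$ is essential there, since it is what produces $\kappa<1$ and thereby keeps us inside the range of applicability of the unweighted corollary; the lower bound $D(\Psi)\ge 1$ is of course automatic from the monotonicity of $\Psi$. I do not anticipate any serious obstacle beyond this bookkeeping.
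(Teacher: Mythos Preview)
Your proposal is correct and mirrors the paper's own derivation: the paper defines $\psi$ by $\psi(|B(x_0,r)|)=\Psi(r)$, notes that $D(\Psi)=2^{\kappa n}$ with $0\le\kappa<1$ allows one to verify the $\mathcal D_\kappa$ condition for $\psi$, identifies $\mathcal L^{1,\Psi}(\mathbb R^n)=\mathcal M^{1,\psi}(\mathbb R^n)$ and $W\mathcal L^{1,\Psi}(\mathbb R^n)=W\mathcal M^{1,\psi}(\mathbb R^n)$, and then invokes the unweighted endpoint corollary. Your explicit verification of the $\mathcal D_\kappa$ condition via iteration of the doubling inequality is in fact more detailed than what the paper provides (it merely asserts the verification can be done), and is correct.
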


\begin{corollary}\label{cor:7}
Let $1<p<\infty$ and $b\in BMO(\mathbb R^n)$. Suppose that $\theta$ satisfies $(\ref{theta2})$ and $\Psi$ satisfies the doubling condition $(\ref{doubling})$ with $1\le D(\Psi)<2^n$, then the commutator operator $[b,T_{\theta}]$ is bounded on $\mathcal L^{p,\Psi}(\mathbb R^n)$.
\end{corollary}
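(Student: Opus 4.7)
The plan is to reduce this to the preceding unweighted corollary (boundedness of $[b,T_\theta]$ on $\mathcal M^{p,\psi}(\mathbb R^n)$) by making precise the correspondence $\psi(|B(x_0,r)|)=\Psi(r)$ sketched in the paragraph right before the statement. Set $c_n:=|B(0,1)|$ and define $\psi\colon(0,+\infty)\to(0,+\infty)$ by $\psi(t):=\Psi\bigl((t/c_n)^{1/n}\bigr)$, so that $\psi(|B(x_0,r)|)=\Psi(r)$ for every ball $B(x_0,r)\subset\mathbb R^n$. Since $\Psi$ is positive and increasing, so is $\psi$.

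The key bookkeeping step is to verify that $\psi$ satisfies the $\mathcal D_\kappa$ condition \eqref{D condition} for some $0\leq\kappa<1$. Because $1\leq D(\Psi)<2^n$ we may write $D(\Psi)=2^{\kappa n}$ with $0\leq\kappa<1$. Given $0<\xi'<\xi$, write $\xi=c_n r^n$, $\xi'=c_n(r')^n$ with $r'<r$, and pick $j\geq 0$ so that $2^jr'\leq r<2^{j+1}r'$. Iterating the doubling condition \eqref{doubling} gives
\begin{equation*}
\Psi(r)\leq \Psi(2^{j+1}r')\leq D(\Psi)^{j+1}\Psi(r')=2^{\kappa n(j+1)}\Psi(r'),\qquad r^{\kappa n}\geq 2^{j\kappa n}(r')^{\kappa n},
\end{equation*}
so that
\begin{equation*}
\frac{\psi(\xi)}{\xi^\kappa}=\frac{\Psi(r)}{c_n^{\kappa}r^{\kappa n}}\leq \frac{2^{\kappa n(j+1)}\Psi(r')}{c_n^{\kappa}\cdot 2^{j\kappa n}(r')^{\kappa n}}=2^{\kappa n}\cdot\frac{\psi(\xi')}{(\xi')^{\kappa}},
\end{equation*}
which is exactly \eqref{D condition} with constant $D(\Psi)$.

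With this $\psi$ in hand, the defining norms match ball-by-ball:
\begin{equation*}
\left(\frac{1}{\psi(|B(x_0,r)|)}\int_{B(x_0,r)}|f(x)|^p\,dx\right)^{1/p}=\left(\frac{1}{\Psi(r)}\int_{B(x_0,r)}|f(x)|^p\,dx\right)^{1/p},
\end{equation*}
hence $\mathcal M^{p,\psi}(\mathbb R^n)=\mathcal L^{p,\Psi}(\mathbb R^n)$ with equal norms. Applying the previously established unweighted corollary ($[b,T_\theta]$ bounded on $\mathcal M^{p,\psi}(\mathbb R^n)$ whenever $\psi$ satisfies $\mathcal D_\kappa$ with $0\leq\kappa<1$), which itself is deduced from Theorem \ref{mainthm:3} by specializing to $w\equiv 1$, immediately yields the desired boundedness of $[b,T_\theta]$ on $\mathcal L^{p,\Psi}(\mathbb R^n)$.

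There is no genuine obstacle here: all the analytic content has been absorbed into Theorem \ref{mainthm:3} (whose proof requires the extra logarithmic integrability \eqref{theta2} on $\theta$ and the $BMO$ hypothesis on $b$). The only thing to observe is that the sharp numerical threshold $D(\Psi)<2^n$ is precisely what forces the auxiliary exponent $\kappa=\log_2 D(\Psi)/n$ to lie in $[0,1)$, which is the admissible range for $\mathcal D_\kappa$. Once that is noted, the corollary is a translation between notations.
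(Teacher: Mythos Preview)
Your proposal is correct and follows exactly the paper's own approach: the paper also defines $\psi$ via $\psi(|B(x_0,r)|)=\Psi(r)$, observes that $D(\Psi)=2^{\kappa n}$ with $0\le\kappa<1$ forces $\psi$ to satisfy the $\mathcal D_\kappa$ condition, identifies $\mathcal M^{p,\psi}(\mathbb R^n)=\mathcal L^{p,\Psi}(\mathbb R^n)$, and then invokes the unweighted corollary deduced from Theorem~\ref{mainthm:3}. In fact you supply more detail than the paper, which merely asserts that one is ``able to verify'' the $\mathcal D_\kappa$ condition without carrying out the iteration you wrote down.
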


\begin{corollary}\label{cor:8}
Let $p=1$ and $b\in BMO(\mathbb R^n)$. Suppose that $\theta$ satisfies $(\ref{theta2})$ and $\Psi$ satisfies the doubling condition $(\ref{doubling})$ with $1\le D(\Psi)<2^n$, then for any given $\sigma>0$ and any ball $B(x_0,r)\subset\mathbb R^n$, there exists a constant $C>0$ independent of $f$, $B(x_0,r)$ and $\sigma>0$ such that
\begin{equation*}
\frac{1}{\Psi(r)}\cdot\Big|\big\{x\in B(x_0,r):\big|[b,T_{\theta}](f)(x)\big|>\sigma\big\}\Big|
\leq C\cdot\bigg\|\Phi\left(\frac{|f|}{\,\sigma\,}\right)\bigg\|_{\mathcal L^{1,\Psi}_{L\log L}(\mathbb R^n)},
\end{equation*}
where $\Phi(t)=t\cdot(1+\log^+t)$.
\end{corollary}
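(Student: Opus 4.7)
The plan is to deduce this corollary directly from the unweighted endpoint estimate already established for the Morrey type space $\mathcal M^{1,\psi}(\mathbb R^n)$ with general $\psi$ satisfying the $\mathcal D_\kappa$ condition (the preceding unweighted corollary). The bridge is the construction of $\psi$ from $\Psi$ indicated in the paragraph preceding the statement. Concretely, I would set $\psi(t):=\Psi\bigl((t/v_n)^{1/n}\bigr)$, where $v_n=|B(0,1)|$, so that $\psi(|B(x_0,r)|)=\Psi(r)$ for every ball $B(x_0,r)\subset\mathbb R^n$. Since $\Psi$ is positive and increasing and $t\mapsto(t/v_n)^{1/n}$ is increasing, $\psi$ is automatically positive and increasing on $(0,+\infty)$.

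The first real check is to verify that this $\psi$ satisfies the $\mathcal D_\kappa$ condition (\ref{D condition}) with an appropriate $\kappa$. Because $1\le D(\Psi)<2^n$, there is a unique $\kappa\in[0,1)$ with $D(\Psi)=2^{n\kappa}$. Given $0<\xi'<\xi<\infty$, write $\xi'=v_n(r')^n$, $\xi=v_n r^n$, and pick the integer $k\ge 0$ with $2^{k-1}\le r/r'<2^k$. Iterating the doubling condition (\ref{doubling}) $k$ times yields $\Psi(r)\le\Psi(2^k r')\le D(\Psi)^k\Psi(r')=2^{nk\kappa}\Psi(r')$, and dividing by $r^{n\kappa}$ gives $\Psi(r)/r^{n\kappa}\le 2^{n\kappa}\,\Psi(r')/(r')^{n\kappa}$. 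Translating this back through $\psi(\xi)/\xi^\kappa=v_n^{-\kappa}\Psi(r)/r^{n\kappa}$ produces $\psi(\xi)/\xi^\kappa\le 2^{n\kappa}\,\psi(\xi')/(\xi')^\kappa$, which is exactly (\ref{D condition}).

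Next I would observe that by the very definition of the norms and the identity $\psi(|B(x_0,r)|)=\Psi(r)$, one has $\mathcal M^{1,\psi}(\mathbb R^n)=\mathcal L^{1,\Psi}(\mathbb R^n)$, $W\mathcal M^{1,\psi}(\mathbb R^n)=W\mathcal L^{1,\Psi}(\mathbb R^n)$, and $\mathcal M^{1,\psi}_{L\log L}(\mathbb R^n)=\mathcal L^{1,\Psi}_{L\log L}(\mathbb R^n)$ with coincident norms. Invoking the earlier unweighted endpoint corollary for $[b,T_\theta]$ on $\mathcal M^{1,\psi}(\mathbb R^n)$ with this particular $\psi$, and rewriting $1/\psi(|B(x_0,r)|)$ as $1/\Psi(r)$ on the left and $\|\cdot\|_{\mathcal M^{1,\psi}_{L\log L}(\mathbb R^n)}$ as $\|\cdot\|_{\mathcal L^{1,\Psi}_{L\log L}(\mathbb R^n)}$ on the right, gives exactly the claimed inequality with the same constant $C$.

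There is no real obstacle here since the hard analytic content, namely the weak-type $L\log L$ endpoint bound for $[b,T_\theta]$ (Theorem \ref{commutator}) combined with a standard local/far decomposition on $2B$ and its annuli controlled via the kernel estimate and the generalized Hölder inequality (\ref{holder}), has already been carried out in the proof of the preceding unweighted corollary. The only nontrivial verification is the $\mathcal D_\kappa$ condition for the associated $\psi$, which, as shown above, is exactly where the assumption $D(\Psi)<2^n$ is used; in particular, the borderline case $D(\Psi)=2^n$ would correspond to $\kappa=1$ and fall outside the admissible range of Definition \ref{WMorrey}.
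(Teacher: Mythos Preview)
Your proposal is correct and follows exactly the route the paper takes: define $\psi$ via $\psi(|B(x_0,r)|)=\Psi(r)$, verify that the hypothesis $1\le D(\Psi)<2^n$ forces $\psi$ to satisfy the $\mathcal D_\kappa$ condition with $\kappa=\log_2 D(\Psi)/n\in[0,1)$, identify the spaces $\mathcal M^{1,\psi}_{L\log L}(\mathbb R^n)=\mathcal L^{1,\Psi}_{L\log L}(\mathbb R^n)$, and invoke the preceding unweighted corollary. In fact you supply more detail than the paper does, since the paper merely asserts that one ``is able to verify'' the $\mathcal D_\kappa$ condition, whereas you carry out the dyadic iteration of the doubling bound explicitly.
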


\section{Proof of Theorems \ref{mainthm:1} and \ref{mainthm:2}}

\begin{proof}[Proof of Theorem $\ref{mainthm:1}$]
Let $f\in\mathcal M^{p,\psi}(w)$ with $1<p<\infty$ and $w\in A_p$. For an arbitrary point $x_0\in\mathbb R^n$, set $B=B(x_0,r_B)$ for the ball centered at $x_0$ and of radius $r_B$, $2B=B(x_0,2r_B)$. We represent $f$ as
\begin{equation*}
f=f\cdot\chi_{2B}+f\cdot\chi_{(2B)^c}:=f_1+f_2;
\end{equation*}
by the linearity of the $\theta$-type Calder\'on--Zygmund operator $T_{\theta}$, we write
\begin{equation*}
\begin{split}
&\frac{1}{\psi(w(B))^{1/p}}\bigg(\int_B\big|T_\theta(f)(x)\big|^pw(x)\,dx\bigg)^{1/p}\\
\leq\ &\frac{1}{\psi(w(B))^{1/p}}\bigg(\int_B\big|T_\theta(f_1)(x)\big|^pw(x)\,dx\bigg)^{1/p}\\
&+\frac{1}{\psi(w(B))^{1/p}}\bigg(\int_B\big|T_\theta(f_2)(x)\big|^pw(x)\,dx\bigg)^{1/p}\\
:=\ &I_1+I_2.
\end{split}
\end{equation*}
Below we will give the estimates of $I_1$ and $I_2$, respectively. By the weighted $L^p$ boundedness of $T_{\theta}$ (see Theorem \ref{strongweak}), we have
\begin{equation*}
\begin{split}
I_1&\leq \frac{1}{\psi(w(B))^{1/p}}
\big\|T_\theta(f_1)\big\|_{L^p_w}\\
&\leq C\cdot\frac{1}{\psi(w(B))^{1/p}}
\bigg(\int_{2B}\big|f(x)\big|^p w(x)\,dx\bigg)^{1/p}\\
&\leq C\big\|f\big\|_{\mathcal M^{p,\psi}(w)}
\cdot\frac{\psi(w(2B))^{1/p}}{\psi(w(B))^{1/p}}.
\end{split}
\end{equation*}
Moreover, since $0<w(B)<w(2B)<+\infty$ when $w\in A_p$ with $1<p<\infty$, then by the $\mathcal D_\kappa$ condition (\ref{D condition}) of $\psi$ and inequality (\ref{weights}), we obtain
\begin{equation*}
\begin{split}
I_1&\leq C\big\|f\big\|_{\mathcal M^{p,\psi}(w)}\cdot\frac{w(2B)^{\kappa/p}}{w(B)^{\kappa/p}}\\
&\leq C\big\|f\big\|_{\mathcal M^{p,\psi}(w)}.
\end{split}
\end{equation*}
As for the term $I_2$, it is clear that when $x\in B$ and $y\in(2B)^c$, we get $|x-y|\approx|x_0-y|$. We then decompose $\mathbb R^n$ into a geometrically increasing sequence of concentric balls, and obtain the following pointwise estimate
\begin{equation}\label{pointwise1}
\begin{split}
\big|T_\theta(f_2)(x)\big|&\leq\int_{\mathbb R^n}\frac{|f_2(y)|}{|x-y|^n}dy
\leq C\int_{(2B)^c}\frac{|f(y)|}{|x_0-y|^n}dy\\
&\leq C\sum_{j=1}^\infty\frac{1}{|2^{j+1}B|}\int_{2^{j+1}B}\big|f(y)\big|\,dy.
\end{split}
\end{equation}
From this, it follows that
\begin{equation*}
I_2\leq C\cdot\frac{w(B)^{1/p}}{\psi(w(B))^{1/p}}\sum_{j=1}^\infty
\frac{1}{|2^{j+1}B|}\int_{2^{j+1}B}\big|f(y)\big|\,dy.
\end{equation*}
By using H\"older's inequality and $A_p$ condition on $w$, we get
\begin{equation*}
\begin{split}
\frac{1}{|2^{j+1}B|}\int_{2^{j+1}B}\big|f(y)\big|\,dy
&\leq\frac{1}{|2^{j+1}B|}\bigg(\int_{2^{j+1}B}\big|f(y)\big|^pw(y)\,dy\bigg)^{1/p}
\left(\int_{2^{j+1}B}w(y)^{-{p'}/p}\,dy\right)^{1/{p'}}\\
&\leq C\big\|f\big\|_{\mathcal M^{p,\psi}(w)}\cdot\frac{\psi(w(2^{j+1}B))^{1/p}}{w(2^{j+1}B)^{1/p}}.
\end{split}
\end{equation*}
Hence
\begin{equation*}
\begin{split}
I_2&\leq C\big\|f\big\|_{\mathcal M^{p,\psi}(w)}
\times\sum_{j=1}^\infty\frac{\psi(w(2^{j+1}B))^{1/p}}{\psi(w(B))^{1/p}}\cdot\frac{w(B)^{1/p}}{w(2^{j+1}B)^{1/p}}.
\end{split}
\end{equation*}
Notice that $w\in A_p\subset A_\infty$ for $1<p<\infty$, then by using the $\mathcal D_\kappa$ condition (\ref{D condition}) of $\psi$ again, the inequality (\ref{compare}) and the fact that $0\leq\kappa<1$, we find that
\begin{align}\label{psi1}
\sum_{j=1}^\infty\frac{\psi(w(2^{j+1}B))^{1/p}}{\psi(w(B))^{1/p}}\cdot\frac{w(B)^{1/p}}{w(2^{j+1}B)^{1/p}}
&\leq C\sum_{j=1}^\infty\frac{w(B)^{{(1-\kappa)}/p}}{w(2^{j+1}B)^{{(1-\kappa)}/p}}\notag\\
&\leq C\sum_{j=1}^\infty\left(\frac{|B|}{|2^{j+1}B|}\right)^{\delta {(1-\kappa)}/p}\notag\\
&\leq C\sum_{j=1}^\infty\left(\frac{1}{2^{(j+1)n}}\right)^{\delta {(1-\kappa)}/p}\notag\\
&\leq C,
\end{align}
which gives our desired estimate $I_2\leq C\big\|f\big\|_{\mathcal M^{p,\psi}(w)}$. Combining the above estimates for $I_1$ and $I_2$, and then taking the supremum over all balls $B\subset\mathbb R^n$, we complete the proof of Theorem \ref{mainthm:1}.
\end{proof}

\begin{proof}[Proof of Theorem $\ref{mainthm:2}$]
Let $f\in\mathcal M^{1,\psi}(w)$ with $w\in A_1$. For an arbitrary ball $B=B(x_0,r_B)\subset\mathbb R^n$, we represent $f$ as
\begin{equation*}
f=f\cdot\chi_{2B}+f\cdot\chi_{(2B)^c}:=f_1+f_2;
\end{equation*}
then for any given $\sigma>0$, by the linearity of the $\theta$-type Calder\'on--Zygmund operator $T_{\theta}$, one can write
\begin{equation*}
\begin{split}
&\frac{1}{\psi(w(B))}\sigma\cdot w\big(\big\{x\in B:\big|T_{\theta}(f)(x)\big|>\sigma\big\}\big)\\
\leq\ &\frac{1}{\psi(w(B))}\sigma\cdot w\big(\big\{x\in B:\big|T_{\theta}(f_1)(x)\big|>\sigma/2\big\}\big)\\
&+\frac{1}{\psi(w(B))}\sigma\cdot w\big(\big\{x\in B:\big|T_{\theta}(f_2)(x)\big|>\sigma/2\big\}\big)\\
:=\ &I'_1+I'_2.
\end{split}
\end{equation*}
We first consider the term $I'_1$. By the weighted weak $(1,1)$ boundedness of $T_{\theta}$ (see Theorem \ref{strongweak}), we have
\begin{equation*}
\begin{split}
I'_1&\leq C\cdot\frac{1}{\psi(w(B))}\big\|f_1\big\|_{L^1_w}\\
&=C\cdot\frac{1}{\psi(w(B))}\bigg(\int_{2B}\big|f(x)\big|w(x)\,dx\bigg)\\
&\leq C\big\|f\big\|_{\mathcal M^{1,\psi}(w)}
\cdot\frac{\psi(w(2B))}{\psi(w(B))}.
\end{split}
\end{equation*}
Moreover, since $0<w(B)<w(2B)<+\infty$ when $w\in A_1$, then we apply the $\mathcal D_\kappa$ condition (\ref{D condition}) of $\psi$ and inequality (\ref{weights}) to obtain that
\begin{equation*}
\begin{split}
I'_1&\leq C\big\|f\big\|_{\mathcal M^{1,\psi}(w)}
\cdot\frac{w(2B)^{\kappa}}{w(B)^{\kappa}}\\
&\leq C\big\|f\big\|_{\mathcal M^{1,\psi}(w)}.
\end{split}
\end{equation*}
As for the term $I'_2$, it follows directly from Chebyshev's inequality and the pointwise estimate \eqref{pointwise1} that
\begin{equation*}
\begin{split}
I'_2&\leq\frac{1}{\psi(w(B))}\sigma\cdot\frac{\,2\,}{\sigma}\int_B\big|T_\theta(f_2)(x)\big|w(x)\,dx\\
&\leq C\cdot\frac{w(B)}{\psi(w(B))}\sum_{j=1}^\infty\frac{1}{|2^{j+1}B|}\int_{2^{j+1}B}\big|f(y)\big|\,dy.
\end{split}
\end{equation*}
Another application of $A_1$ condition on $w$ gives that
\begin{equation*}
\begin{split}
\frac{1}{|2^{j+1}B|}\int_{2^{j+1}B}\big|f(y)\big|\,dy
&\leq C\frac{1}{w(2^{j+1}B)}\cdot\underset{y\in 2^{j+1}B}{\mbox{ess\,inf}}\;w(y)\int_{2^{j+1}B}\big|f(y)\big|\,dy\\
&\leq C\frac{1}{w(2^{j+1}B)}\bigg(\int_{2^{j+1}B}\big|f(y)\big|w(y)\,dy\bigg)\\
&\leq C\big\|f\big\|_{\mathcal M^{1,\psi}(w)}\cdot\frac{\psi(w(2^{j+1}B))}{w(2^{j+1}B)}.
\end{split}
\end{equation*}
Consequently,
\begin{equation*}
\begin{split}
I'_2&\leq C\big\|f\big\|_{\mathcal M^{1,\psi}(w)}
\times\sum_{j=1}^\infty\frac{\psi(w(2^{j+1}B))}{\psi(w(B))}\cdot\frac{w(B)}{w(2^{j+1}B)}.
\end{split}
\end{equation*}
Recall that $w\in A_1\subset A_\infty$, therefore, by using the $\mathcal D_\kappa$ condition (\ref{D condition}) of $\psi$ again, the inequality (\ref{compare}) and the fact that $0\leq\kappa<1$, we get
\begin{align}\label{psi2}
\sum_{j=1}^\infty\frac{\psi(w(2^{j+1}B))}{\psi(w(B))}\cdot\frac{w(B)}{w(2^{j+1}B)}
&\leq C\sum_{j=1}^\infty\frac{w(B)^{1-\kappa}}{w(2^{j+1}B)^{1-\kappa}}\notag\\
&\leq C\sum_{j=1}^\infty\left(\frac{|B|}{|2^{j+1}B|}\right)^{\delta^*(1-\kappa)}\notag\\
&\leq C\sum_{j=1}^\infty\left(\frac{1}{2^{(j+1)n}}\right)^{\delta^*(1-\kappa)}\notag\\
&\leq C,
\end{align}
which implies our desired estimate $I'_2\leq C\big\|f\big\|_{\mathcal M^{1,\psi}(w)}$. Summing up the above estimates for $I'_1$ and $I'_2$, and then taking the supremum over all balls $B\subset\mathbb R^n$ and all $\sigma>0$, we finish the proof of Theorem \ref{mainthm:2}.
\end{proof}

\section{Proof of Theorems \ref{mainthm:3} and \ref{mainthm:4}}

To prove our main theorems in this section, we need the following lemma about $BMO$ functions.
\begin{lemma}\label{BMO}
Let $b$ be a function in $BMO(\mathbb R^n)$. Then

$(i)$ For every ball $B$ in $\mathbb R^n$ and for all $j\in\mathbb Z^+$,
\begin{equation*}
\big|b_{2^{j+1}B}-b_B\big|\leq C\cdot(j+1)\|b\|_*.
\end{equation*}

$(ii)$ For every ball $B$ in $\mathbb R^n$ and for all $w\in A_p$ with $1\leq p<\infty$,
\begin{equation*}
\bigg(\int_B\big|b(x)-b_B\big|^pw(x)\,dx\bigg)^{1/p}\leq C\|b\|_*\cdot w(B)^{1/p}.
\end{equation*}
\end{lemma}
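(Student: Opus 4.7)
The plan is to prove the two parts of Lemma~\ref{BMO} by standard, well-known techniques: a telescoping argument for part $(i)$, and the weighted John--Nirenberg inequality combined with the layer-cake representation for part $(ii)$.

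For part $(i)$, I would write the difference as a telescoping sum
\[
b_{2^{j+1}B} - b_B \;=\; \sum_{k=0}^{j}\bigl(b_{2^{k+1}B} - b_{2^{k}B}\bigr),
\]
and estimate each term by passing to a common averaging ball. Specifically, since $2^{k}B \subset 2^{k+1}B$ and $|2^{k+1}B|/|2^{k}B| = 2^{n}$,
\[
\bigl|b_{2^{k+1}B} - b_{2^{k}B}\bigr|
\le \frac{1}{|2^{k}B|}\int_{2^{k}B} \bigl|b(y) - b_{2^{k+1}B}\bigr|\,dy
\le 2^{n}\cdot\frac{1}{|2^{k+1}B|}\int_{2^{k+1}B}\bigl|b(y)-b_{2^{k+1}B}\bigr|\,dy
\le 2^{n}\|b\|_{*}.
\]
Summing over $k = 0,1,\dots,j$ yields $|b_{2^{j+1}B}-b_{B}|\le 2^{n}(j+1)\|b\|_{*}$, which is exactly $(i)$.

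For part $(ii)$, the key input is the weighted John--Nirenberg inequality: since $w\in A_{p}\subset A_{\infty}$, there exist constants $C_{1},C_{2}>0$ (depending on $w$) such that for every ball $B$ and every $\lambda>0$,
\[
w\bigl(\bigl\{x\in B : |b(x)-b_{B}|>\lambda\bigr\}\bigr)
\le C_{1}\,e^{-C_{2}\lambda/\|b\|_{*}}\,w(B).
\]
I would cite this as a standard consequence of the classical John--Nirenberg inequality combined with the self-improving property of $A_{\infty}$ weights (inequality \eqref{compare}). Given this bound, the layer-cake formula gives
\[
\int_{B}\bigl|b(x)-b_{B}\bigr|^{p}w(x)\,dx
= p\int_{0}^{\infty}\lambda^{p-1}\,w\bigl(\bigl\{x\in B : |b(x)-b_{B}|>\lambda\bigr\}\bigr)\,d\lambda
\le C_{1}\,p\,w(B)\int_{0}^{\infty}\lambda^{p-1}e^{-C_{2}\lambda/\|b\|_{*}}\,d\lambda,
\]
and the last integral is a Gamma function that evaluates to a constant multiple of $\|b\|_{*}^{p}$. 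Taking the $p$-th root yields $(ii)$.

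The only nontrivial ingredient is the weighted John--Nirenberg inequality; everything else is arithmetic. In the write-up I would either quote it directly from a standard reference (e.g.\ Garc\'ia-Cuerva--Rubio de Francia \cite{garcia}) or sketch the short derivation from the unweighted John--Nirenberg inequality via the $A_{\infty}$ condition \eqref{compare}, which is the expected main obstacle and the only place where the weight $w$ enters in a nontrivial way.
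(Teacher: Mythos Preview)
Your proof is correct. The paper itself does not prove this lemma at all: it simply cites \cite{stein2} for part $(i)$ and \cite{wang} for part $(ii)$, so your telescoping argument for $(i)$ and the weighted John--Nirenberg plus layer-cake argument for $(ii)$ are precisely the standard proofs one would find behind those citations, and in fact supply more detail than the paper does.
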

\begin{proof}
For the proof of $(i)$, we refer the reader to \cite{stein2}. For the proof of $(ii)$, we refer the reader to \cite{wang}.
\end{proof}

\begin{proof}[Proof of Theorem $\ref{mainthm:3}$]
Let $f\in\mathcal M^{p,\psi}(w)$ with $1<p<\infty$ and $w\in A_p$. For each fixed ball $B=B(x_0,r_B)\subset\mathbb R^n$, as before, we represent $f$ as $f=f_1+f_2$, where $f_1=f\cdot\chi_{2B}$, $2B=B(x_0,2r_B)\subset\mathbb R^n$. By the linearity of the commutator operator $[b,T_{\theta}]$, we write
\begin{equation*}
\begin{split}
&\frac{1}{\psi(w(B))^{1/p}}\bigg(\int_B\big|[b,T_\theta](f)(x)\big|^pw(x)\,dx\bigg)^{1/p}\\
\leq\ &\frac{1}{\psi(w(B))^{1/p}}\bigg(\int_B\big|[b,T_\theta](f_1)(x)\big|^pw(x)\,dx\bigg)^{1/p}\\
&+\frac{1}{\psi(w(B))^{1/p}}\bigg(\int_B\big|[b,T_\theta](f_2)(x)\big|^pw(x)\,dx\bigg)^{1/p}\\
:=\ &J_1+J_2.
\end{split}
\end{equation*}
Since $T_{\theta}$ is bounded on $L^p_w(\mathbb R^n)$ for $1<p<\infty$ and $w\in A_p$, then by the well-known boundedness criterion for the commutators of linear operators, which was obtained by Alvarez et al.in \cite{alvarez}, we know that $[b,T_{\theta}]$ is also bounded on $L^p_w(\mathbb R^n)$ for all $1<p<\infty$ and $w\in A_p$, whenever $b\in BMO(\mathbb R^n)$. This fact together with the $\mathcal D_\kappa$ condition (\ref{D condition}) of $\psi$ and inequality (\ref{weights}) imply
\begin{equation*}
\begin{split}
J_1&\leq \frac{1}{\psi(w(B))^{1/p}}
\big\|[b,T_\theta](f_1)\big\|_{L^p_w}\\
&\leq C\cdot\frac{1}{\psi(w(B))^{1/p}}
\bigg(\int_{2B}\big|f(x)\big|^p w(x)\,dx\bigg)^{1/p}\\
&\leq C\big\|f\big\|_{\mathcal M^{p,\psi}(w)}
\cdot\frac{\psi(w(2B))^{1/p}}{\psi(w(B))^{1/p}}\\
&\leq C\big\|f\big\|_{\mathcal M^{p,\psi}(w)}\cdot\frac{w(2B)^{\kappa/p}}{w(B)^{\kappa/p}}\\
&\leq C\big\|f\big\|_{\mathcal M^{p,\psi}(w)}.
\end{split}
\end{equation*}
Let us now turn to the estimate of $J_2$. By definition, for any $x\in B$, we have
\begin{equation*}
\big|[b,T_\theta](f_2)(x)\big|\leq\big|b(x)-b_{B}\big|\cdot\big|T_\theta(f_2)(x)\big|
+\Big|T_\theta\big([b_{B}-b]f_2\big)(x)\Big|.
\end{equation*}
In the proof of Theorem \ref{mainthm:1}, we have already shown that (see \eqref{pointwise1})
\begin{equation*}
\big|T_\theta(f_2)(x)\big|\leq C\sum_{j=1}^\infty\frac{1}{|2^{j+1}B|}\int_{2^{j+1}B}\big|f(y)\big|\,dy.
\end{equation*}
Following the same arguments as in \eqref{pointwise1}, we can also prove that
\begin{equation}\label{pointwise2}
\begin{split}
\Big|T_\theta\big([b_{B}-b]f_2\big)(x)\Big|
&\leq\int_{\mathbb R^n}\frac{|[b_B-b(y)]f_2(y)|}{|x-y|^n}dy\\
&\leq C\int_{(2B)^c}\frac{|[b_B-b(y)]f(y)|}{|x_0-y|^n}dy\\
&\leq C\sum_{j=1}^\infty\frac{1}{|2^{j+1}B|}\int_{2^{j+1}B}\big|b(y)-b_{B}\big|\cdot\big|f(y)\big|\,dy.
\end{split}
\end{equation}
Hence, from the above two pointwise estimates for $\big|T_\theta(f_2)(x)\big|$ and $\big|T_\theta\big([b_{B}-b]f_2\big)(x)\big|$, it follows that
\begin{equation*}
\begin{split}
J_2&\leq\frac{C}{\psi(w(B))^{1/p}}\bigg(\int_B\big|b(x)-b_B\big|^pw(x)\,dx\bigg)^{1/p}
\times\bigg(\sum_{j=1}^\infty\frac{1}{|2^{j+1}B|}\int_{2^{j+1}B}\big|f(y)\big|\,dy\bigg)\\
&+C\cdot\frac{w(B)^{1/p}}{\psi(w(B))^{1/p}}\sum_{j=1}^\infty
\frac{1}{|2^{j+1}B|}\int_{2^{j+1}B}\big|b_{2^{j+1}B}-b_B\big|\cdot\big|f(y)\big|\,dy\\
&+C\cdot\frac{w(B)^{1/p}}{\psi(w(B))^{1/p}}\sum_{j=1}^\infty
\frac{1}{|2^{j+1}B|}\int_{2^{j+1}B}\big|b(y)-b_{2^{j+1}B}\big|\cdot\big|f(y)\big|\,dy\\
&:=J_3+J_4+J_5.
\end{split}
\end{equation*}
Below we will give the estimates of $J_3$, $J_4$ and $J_5$, respectively. Using $(ii)$ of Lemma \ref{BMO}, H\"older's inequality and the $A_p$ condition, we obtain
\begin{equation*}
\begin{split}
J_3&\leq C\|b\|_*\cdot\frac{w(B)^{1/p}}{\psi(w(B))^{1/p}}
\times\bigg(\sum_{j=1}^\infty\frac{1}{|2^{j+1}B|}\int_{2^{j+1}B}\big|f(y)\big|\,dy\bigg)\\
&\leq C\|b\|_*\cdot\frac{w(B)^{1/p}}{\psi(w(B))^{1/p}}
\sum_{j=1}^\infty\frac{1}{|2^{j+1}B|}
\bigg(\int_{2^{j+1}B}\big|f(y)\big|^pw(y)\,dy\bigg)^{1/p}\\
&\times\left(\int_{2^{j+1}B}w(y)^{-{p'}/p}\,dy\right)^{1/{p'}}\\
&\leq C\big\|f\big\|_{\mathcal M^{p,\psi}(w)}
\times\sum_{j=1}^\infty\frac{\psi(w(2^{j+1}B))^{1/p}}{\psi(w(B))^{1/p}}\cdot\frac{w(B)^{1/p}}{w(2^{j+1}B)^{1/p}}\\
&\leq C\big\|f\big\|_{\mathcal M^{p,\psi}(w)},
\end{split}
\end{equation*}
where in the last inequality we have used the estimate \eqref{psi1}. Applying $(i)$ of Lemma \ref{BMO}, H\"older's inequality and the $A_p$ condition, we can deduce that
\begin{equation*}
\begin{split}
J_4&\leq C\|b\|_*\cdot\frac{w(B)^{1/p}}{\psi(w(B))^{1/p}}
\times\sum_{j=1}^\infty\frac{(j+1)}{|2^{j+1}B|}\int_{2^{j+1}B}\big|f(y)\big|\,dy\\
&\leq C\|b\|_*\cdot\frac{w(B)^{1/p}}{\psi(w(B))^{1/p}}
\sum_{j=1}^\infty\frac{(j+1)}{|2^{j+1}B|}
\bigg(\int_{2^{j+1}B}\big|f(y)\big|^pw(y)\,dy\bigg)^{1/p}\\
&\times\left(\int_{2^{j+1}B}w(y)^{-{p'}/p}\,dy\right)^{1/{p'}}\\
&\leq C\big\|f\big\|_{\mathcal M^{p,\psi}(w)}
\times\sum_{j=1}^\infty\big(j+1\big)
\cdot\frac{\psi(w(2^{j+1}B))^{1/p}}{\psi(w(B))^{1/p}}\cdot\frac{w(B)^{1/p}}{w(2^{j+1}B)^{1/p}}.
\end{split}
\end{equation*}
For any $j\in\mathbb Z^+$, since $0<w(B)<w(2^{j+1}B)<+\infty$ when $w\in A_p$ with $1<p<\infty$, then by using the $\mathcal D_\kappa$ condition (\ref{D condition}) of $\psi$ and the inequality (\ref{compare}) together with the fact that $0\leq\kappa<1$, we thus obtain
\begin{align}\label{psi3}
\sum_{j=1}^\infty\big(j+1\big)\cdot\frac{\psi(w(2^{j+1}B))^{1/p}}{\psi(w(B))^{1/p}}
\cdot\frac{w(B)^{1/p}}{w(2^{j+1}B)^{1/p}}
&\leq C\sum_{j=1}^\infty\big(j+1\big)\cdot\frac{w(B)^{{(1-\kappa)}/p}}{w(2^{j+1}B)^{{(1-\kappa)}/p}}\notag\\
&\leq C\sum_{j=1}^\infty\big(j+1\big)\cdot\left(\frac{|B|}{|2^{j+1}B|}\right)^{\delta {(1-\kappa)}/p}\notag\\
&\leq C\sum_{j=1}^\infty\big(j+1\big)\cdot\left(\frac{1}{2^{(j+1)n}}\right)^{\delta {(1-\kappa)}/p}\notag\\
&\leq C,
\end{align}
where the last series is convergent since the exponent $\delta {(1-\kappa)}/p$ is positive. This implies our desired estimate $J_4\leq C\big\|f\big\|_{\mathcal M^{p,\psi}(w)}$. It remains to estimate the last term $J_5$. An application of H\"older's inequality gives us that
\begin{equation*}
\begin{split}
J_5&\leq C\cdot\frac{w(B)^{1/p}}{\psi(w(B))^{1/p}}\sum_{j=1}^\infty\frac{1}{|2^{j+1}B|}
\bigg(\int_{2^{j+1}B}\big|f(y)\big|^pw(y)\,dy\bigg)^{1/p}\\
&\times\left(\int_{2^{j+1}B}\big|b(y)-b_{2^{j+1}B}\big|^{p'}w(y)^{-{p'}/p}\,dy\right)^{1/{p'}}.
\end{split}
\end{equation*}
If we set $\mu(y)=w(y)^{-{p'}/p}$, then we have $\mu\in A_{p'}$ because $w\in A_p$(see \cite{duoand,garcia}). Thus, it follows from $(ii)$ of Lemma \ref{BMO} and the $A_p$ condition that
\begin{equation*}
\begin{split}
\left(\int_{2^{j+1}B}\big|b(y)-b_{2^{j+1}B}\big|^{p'}\mu(y)\,dy\right)^{1/{p'}}
&\leq C\|b\|_*\cdot\mu\big(2^{j+1}B\big)^{1/{p'}}\\
&=C\|b\|_*\cdot\left(\int_{2^{j+1}B}w(y)^{-{p'}/p}dy\right)^{1/{p'}}\\
&\leq C\|b\|_*\cdot\frac{|2^{j+1}B|}{w(2^{j+1}B)^{1/p}}.
\end{split}
\end{equation*}
Therefore, in view of the estimate \eqref{psi1}, we conclude that
\begin{equation*}
\begin{split}
J_5&\leq C\|b\|_*\cdot\frac{w(B)^{1/p}}{\psi(w(B))^{1/p}}
\sum_{j=1}^\infty\frac{1}{w(2^{j+1}B)^{1/p}}\bigg(\int_{2^{j+1}B}\big|f(y)\big|^pw(y)\,dy\bigg)^{1/p}\\
&\leq C\big\|f\big\|_{\mathcal M^{p,\psi}(w)}\times
\sum_{j=1}^\infty\frac{\psi(w(2^{j+1}B))^{1/p}}{\psi(w(B))^{1/p}}\cdot\frac{w(B)^{1/p}}{w(2^{j+1}B)^{1/p}}\\
&\leq C\big\|f\big\|_{\mathcal M^{p,\psi}(w)}.
\end{split}
\end{equation*}
Summarizing the estimates derived above and then taking the supremum over all balls $B\subset\mathbb R^n$, we complete the proof of Theorem \ref{mainthm:3}.
\end{proof}

\begin{proof}[Proof of Theorem $\ref{mainthm:4}$]
For any fixed ball $B=B(x_0,r_B)$ in $\mathbb R^n$, as before, we represent $f$ as $f=f_1+f_2$, where $f_1=f\cdot\chi_{2B}$, $2B=B(x_0,2r_B)\subset\mathbb R^n$. Then for any given $\sigma>0$, by the linearity of the commutator operator $[b,T_{\theta}]$, one can write
\begin{equation*}
\begin{split}
&\frac{1}{\psi(w(B))}\cdot w\big(\big\{x\in B:\big|[b,T_\theta](f)(x)\big|>\sigma\big\}\big)\\
\leq &\frac{1}{\psi(w(B))}\cdot w\big(\big\{x\in B:\big|[b,T_\theta](f_1)(x)\big|>\sigma/2\big\}\big)\\
&+\frac{1}{\psi(w(B))}\cdot w\big(\big\{x\in B:\big|[b,T_\theta](f_2)(x)\big|>\sigma/2\big\}\big)\\
:=&J'_1+J'_2.
\end{split}
\end{equation*}
By using Theorem \ref{commutator} and the previous estimate \eqref{main esti2}, we get
\begin{equation*}
\begin{split}
J'_1&\leq C\cdot\frac{1}{\psi(w(B))}\int_{\mathbb R^n}\Phi\left(\frac{|f_1(x)|}{\sigma}\right)\cdot w(x)\,dx\\
&= C\cdot\frac{1}{\psi(w(B))}\int_{2B}\Phi\left(\frac{|f(x)|}{\sigma}\right)\cdot w(x)\,dx\\
&= C\cdot\frac{\psi(w(2B))}{\psi(w(B))}\cdot\frac{1}{\psi(w(2B))}
\int_{2B}\Phi\left(\frac{|f(x)|}{\sigma}\right)\cdot w(x)\,dx\\
&\leq C\cdot\frac{\psi(w(2B))}{\psi(w(B))}\cdot\frac{w(2B)}{\psi(w(2B))}
\cdot\bigg\|\Phi\left(\frac{|f|}{\,\sigma\,}\right)\bigg\|_{L\log L(w),2B}.
\end{split}
\end{equation*}
Moreover, since $0<w(B)<w(2B)<+\infty$ when $w\in A_1$, then by the $\mathcal D_\kappa$ condition (\ref{D condition}) of $\psi$ and inequality (\ref{weights}), we have
\begin{equation*}
\begin{split}
J'_1&\leq C\cdot\frac{w(2B)^\kappa}{w(B)^\kappa}\cdot
\left\{\frac{w(2B)}{\psi(w(2B))}\cdot\bigg\|\Phi\left(\frac{|f|}{\,\sigma\,}\right)\bigg\|_{L\log L(w),2B}\right\}\\
&\leq C\cdot\bigg\|\Phi\left(\frac{|f|}{\,\sigma\,}\right)\bigg\|_{\mathcal M^{1,\psi}_{L\log L}(w)},
\end{split}
\end{equation*}
which is our desired estimate. We now turn to deal with the term $J'_2$. Recall that the following inequality
\begin{equation*}
\big|[b,T_\theta](f_2)(x)\big|\leq\big|b(x)-b_{B}\big|\cdot\big|T_\theta(f_2)(x)\big|
+\Big|T_\theta\big([b_{B}-b]f_2\big)(x)\Big|
\end{equation*}
is valid. So we can further decompose $J'_2$ as
\begin{equation*}
\begin{split}
J'_2\leq&\frac{1}{\psi(w(B))}\cdot
w\big(\big\{x\in B:\big|b(x)-b_{B}\big|\cdot\big|T_\theta(f_2)(x)\big|>\sigma/4\big\}\big)\\
&+\frac{1}{\psi(w(B))}\cdot
w\Big(\Big\{x\in B:\Big|T_\theta\big([b_{B}-b]f_2\big)(x)\Big|>\sigma/4\Big\}\Big)\\
:=&J'_3+J'_4.
\end{split}
\end{equation*}
By using the previous pointwise estimate \eqref{pointwise1}, Chebyshev's inequality together with $(ii)$ of Lemma \ref{BMO}, we deduce that
\begin{equation*}
\begin{split}
J'_3&\leq\frac{1}{\psi(w(B))}\cdot\frac{\,4\,}{\sigma}
\int_B\big|b(x)-b_{B}\big|\cdot\big|T_\theta(f_2)(x)\big|w(x)\,dx\\
&\leq C\sum_{j=1}^\infty\frac{1}{|2^{j+1}B|}\int_{2^{j+1}B}\frac{|f(y)|}{\sigma}\,dy
\times\frac{1}{\psi(w(B))}\cdot\int_B\big|b(x)-b_{B}\big|w(x)\,dx\\
&\leq C\|b\|_*\sum_{j=1}^\infty\frac{1}{|2^{j+1}B|}\int_{2^{j+1}B}\frac{|f(y)|}{\sigma}\,dy
\times\frac{w(B)}{\psi(w(B))}.\\
\end{split}
\end{equation*}
Furthermore, note that $t\leq\Phi(t)=t\cdot(1+\log^+t)$ for any $t>0$. It then follows from the $A_1$ condition and the previous estimate \eqref{main esti1} that
\begin{equation*}
\begin{split}
J'_3&\leq C\sum_{j=1}^\infty\frac{1}{w(2^{j+1}B)}\int_{2^{j+1}B}\frac{|f(y)|}{\sigma}\cdot w(y)\,dy
\times\frac{w(B)}{\psi(w(B))}\\
&\leq C\sum_{j=1}^\infty\frac{1}{w(2^{j+1}B)}\int_{2^{j+1}B}\Phi\left(\frac{|f(y)|}{\sigma}\right)\cdot w(y)\,dy
\times\frac{w(B)}{\psi(w(B))}\\
&\leq C\sum_{j=1}^\infty\bigg\|\Phi\left(\frac{|f|}{\,\sigma\,}\right)\bigg\|_{L\log L(w),2^{j+1}B}
\times\frac{w(B)}{\psi(w(B))}\\
&= C\sum_{j=1}^\infty\left\{\frac{w(2^{j+1}B)}{\psi(w(2^{j+1}B))}\cdot
\bigg\|\Phi\left(\frac{|f|}{\,\sigma\,}\right)\bigg\|_{L\log L(w),2^{j+1}B}\right\}
\times\frac{\psi(w(2^{j+1}B))}{\psi(w(B))}\cdot\frac{w(B)}{w(2^{j+1}B)}\\
&\leq C\cdot\bigg\|\Phi\left(\frac{|f|}{\,\sigma\,}\right)\bigg\|_{\mathcal M^{1,\psi}_{L\log L}(w)}
\times\sum_{j=1}^\infty\frac{\psi(w(2^{j+1}B))}{\psi(w(B))}\cdot\frac{w(B)}{w(2^{j+1}B)}\\
&\leq C\cdot\bigg\|\Phi\left(\frac{|f|}{\,\sigma\,}\right)\bigg\|_{\mathcal M^{1,\psi}_{L\log L}(w)},
\end{split}
\end{equation*}
where in the last inequality we have used the estimate \eqref{psi2}. On the other hand,
applying the pointwise estimate \eqref{pointwise2} and Chebyshev's inequality, we have
\begin{equation*}
\begin{split}
J'_4&\leq\frac{1}{\psi(w(B))}\cdot\frac{\,4\,}{\sigma}
\int_B\Big|T_{\theta}\big([b_{B}-b]f_2\big)(x)\Big|w(x)\,dx\\
&\leq\frac{w(B)}{\psi(w(B))}\cdot\frac{\,C\,}{\sigma}
\sum_{j=1}^\infty\frac{1}{|2^{j+1}B|}\int_{2^{j+1}B}\big|b(y)-b_{B}\big|\cdot\big|f(y)\big|\,dy\\
&\leq\frac{w(B)}{\psi(w(B))}\cdot\frac{\,C\,}{\sigma}
\sum_{j=1}^\infty\frac{1}{|2^{j+1}B|}\int_{2^{j+1}B}\big|b(y)-b_{2^{j+1}B}\big|\cdot\big|f(y)\big|\,dy\\
&+\frac{w(B)}{\psi(w(B))}\cdot\frac{\,C\,}{\sigma}
\sum_{j=1}^\infty\frac{1}{|2^{j+1}B|}\int_{2^{j+1}B}\big|b_{2^{j+1}B}-b_B\big|\cdot\big|f(y)\big|\,dy\\
&:=J'_5+J'_6.
\end{split}
\end{equation*}
For the term $J'_5$, since $w\in A_1$, it follows from the $A_1$ condition and the fact $t\leq \Phi(t)$ that
\begin{equation*}
\begin{split}
J'_5&\leq\frac{\,C\,}{\sigma}\cdot\frac{w(B)}{\psi(w(B))}
\sum_{j=1}^\infty\frac{1}{w(2^{j+1}B)}\int_{2^{j+1}B}\big|b(y)-b_{2^{j+1}B}\big|\cdot\big|f(y)\big|w(y)\,dy\\
&\leq C\cdot\frac{w(B)}{\psi(w(B))}
\sum_{j=1}^\infty\frac{1}{w(2^{j+1}B)}\int_{2^{j+1}B}\big|b(y)-b_{2^{j+1}B}\big|
\cdot\Phi\left(\frac{|f(y)|}{\sigma}\right)w(y)\,dy.\\
\end{split}
\end{equation*}
Furthermore, we use the generalized H\"older's inequality with weight \eqref{Wholder} to obtain
\begin{equation*}
\begin{split}
J'_5&\leq C\cdot\frac{w(B)}{\psi(w(B))}
\sum_{j=1}^\infty\big\|b-b_{2^{j+1}B}\big\|_{\exp L(w),2^{j+1}B}
\bigg\|\Phi\left(\frac{|f|}{\,\sigma\,}\right)\bigg\|_{L\log L(w),2^{j+1}B}\\
&\leq C\|b\|_*\cdot\frac{w(B)}{\psi(w(B))}
\sum_{j=1}^\infty\bigg\|\Phi\left(\frac{|f|}{\,\sigma\,}\right)\bigg\|_{L\log L(w),2^{j+1}B}.
\end{split}
\end{equation*}
In the last inequality, we have used the well-known fact that (see \cite{zhang})
\begin{equation}\label{Jensen}
\big\|b-b_{B}\big\|_{\exp L(w),B}\leq C\|b\|_*,\qquad \mbox{for any ball }B\subset\mathbb R^n.
\end{equation}
It is equivalent to the inequality
\begin{equation*}
\frac{1}{w(B)}\int_B\exp\bigg(\frac{|b(y)-b_B|}{c_0\|b\|_*}\bigg)w(y)\,dy\leq C,
\end{equation*}
which is just a corollary of the well-known John--Nirenberg's inequality (see \cite{john}) and the comparison property of $A_1$ weights. Hence, by the estimate \eqref{psi2}
\begin{equation*}
\begin{split}
J'_5&\leq C\|b\|_*\sum_{j=1}^\infty\left\{\frac{w(2^{j+1}B)}{\psi(w(2^{j+1}B))}\cdot
\bigg\|\Phi\left(\frac{|f|}{\,\sigma\,}\right)\bigg\|_{L\log L(w),2^{j+1}B}\right\}\\
&\times\frac{\psi(w(2^{j+1}B))}{\psi(w(B))}\cdot\frac{w(B)}{w(2^{j+1}B)}\\
&\leq C\cdot\bigg\|\Phi\left(\frac{|f|}{\,\sigma\,}\right)\bigg\|_{\mathcal M^{1,\psi}_{L\log L}(w)}
\times\sum_{j=1}^\infty\frac{\psi(w(2^{j+1}B))}{\psi(w(B))}\cdot\frac{w(B)}{w(2^{j+1}B)}\\
&\leq C\cdot\bigg\|\Phi\left(\frac{|f|}{\,\sigma\,}\right)\bigg\|_{\mathcal M^{1,\psi}_{L\log L}(w)}.
\end{split}
\end{equation*}
For the last term $J'_6$ we proceed as follows. Using $(i)$ of Lemma \ref{BMO} together with the facts $w\in A_1$ and $t\leq\Phi(t)=t\cdot(1+\log^+t)$, we deduce that
\begin{equation*}
\begin{split}
J'_6&\leq C\cdot\frac{w(B)}{\psi(w(B))}
\sum_{j=1}^\infty(j+1)\|b\|_*\cdot\frac{1}{|2^{j+1}B|}\int_{2^{j+1}B}\frac{|f(y)|}{\sigma}\,dy\\
&\leq C\cdot\frac{w(B)}{\psi(w(B))}
\sum_{j=1}^\infty(j+1)\|b\|_*\cdot\frac{1}{w(2^{j+1}B)}\int_{2^{j+1}B}\frac{|f(y)|}{\sigma}\cdot w(y)\,dy\\
&\leq C\|b\|_*\cdot\frac{w(B)}{\psi(w(B))}
\sum_{j=1}^\infty\frac{(j+1)}{w(2^{j+1}B)}\int_{2^{j+1}B}\Phi\left(\frac{|f(y)|}{\sigma}\right)\cdot w(y)\,dy\\
&= C\|b\|_*\sum_{j=1}^\infty\left\{\frac{w(2^{j+1}B)}{\psi(w(2^{j+1}B))}\cdot
\bigg\|\Phi\left(\frac{|f|}{\,\sigma\,}\right)\bigg\|_{L\log L(w),2^{j+1}B}\right\}\\
&\times(j+1)\cdot\frac{\psi(w(2^{j+1}B))}{\psi(w(B))}\cdot\frac{w(B)}{w(2^{j+1}B)}\\
&\leq C\cdot\bigg\|\Phi\left(\frac{|f|}{\,\sigma\,}\right)\bigg\|_{\mathcal M^{1,\psi}_{L\log L}(w)}
\times\sum_{j=1}^\infty(j+1)\cdot\frac{\psi(w(2^{j+1}B))}{\psi(w(B))}\cdot\frac{w(B)}{w(2^{j+1}B)}.
\end{split}
\end{equation*}
Recall that $w\in A_1\subset A_\infty$. We can now argue exactly as we did in the estimation of \eqref{psi3} to get
\begin{align}\label{psi4}
\sum_{j=1}^\infty(j+1)\cdot\frac{\psi(w(2^{j+1}B))}{\psi(w(B))}\cdot\frac{w(B)}{w(2^{j+1}B)}
&\leq C\sum_{j=1}^\infty(j+1)\cdot\frac{w(B)^{1-\kappa}}{w(2^{j+1}B)^{1-\kappa}}\notag\\
&\leq C\sum_{j=1}^\infty(j+1)\cdot\left(\frac{|B|}{|2^{j+1}B|}\right)^{\delta^*(1-\kappa)}\notag\\
&\leq C\sum_{j=1}^\infty(j+1)\cdot\left(\frac{1}{2^{(j+1)n}}\right)^{\delta^*(1-\kappa)}\notag\\
&\leq C,
\end{align}
Notice that the exponent $\delta^*{(1-\kappa)}$ is positive by the choice of $\kappa$, which guarantees that the last series is convergent. If we substitute this estimate \eqref{psi4} into the term $J'_6$, we get the desired inequality
\begin{equation*}
J'_6\leq C\cdot\bigg\|\Phi\left(\frac{|f|}{\,\sigma\,}\right)\bigg\|_{\mathcal M^{1,\psi}_{L\log L}(w)}.
\end{equation*}
This completes the proof of Theorem \ref{mainthm:4}.
\end{proof}

\section{Partial results on two-weight problems}
In the last section, we consider related problems about two-weight, weak type $(p,p)$ inequalities with $1<p<\infty$. Let $\mathcal T$ be the classical Calder¨®n--Zygmund operator with standard kernel, that is, $\mathcal T=T_{\theta}$ when $\theta(t)=t^{\delta}$ with $0<\delta\leq1$. It is well known that $\mathcal T$ is a bounded operator on $L^p_w(\mathbb R^n)$ for all $1<p<\infty$ and $w\in A_p$, and of course, $\mathcal T$ is a bounded operator from $L^p_w(\mathbb R^n)$ into $WL^p_w(\mathbb R^n)$. In the two-weight context, however, the $A_p$ condition is NOT sufficient for the weak-type $(p,p)$ inequality for $\mathcal T$. More precisely, given a pair of weights $(u,v)$ and $p$, $1<p<\infty$, the weak-type inequality
\begin{equation}\label{T}
u\big(\big\{x\in\mathbb R^n:\big|\mathcal Tf(x)\big|>\sigma\big\}\big)
\leq \frac{C}{\sigma^p}\int_{\mathbb R^n}\big|f(x)\big|^p v(x)\,dx
\end{equation}
does not hold if $(u,v)\in A_p$: there exists a positive constant $C$ such that for every cube $Q\subset\mathbb R^n$,
\begin{equation}\label{two}
\left(\frac1{|Q|}\int_Q u(x)\,dx\right)^{1/p}\left(\frac1{|Q|}\int_Q v(x)^{-p'/p}\,dx\right)^{1/{p'}}\leq C<\infty,
\end{equation}
one can see \cite{cruz1,muckenhoupt} for some counter-examples. Here all cubes are assumed to have their sides parallel to the coordinate axes, $Q(x_0,\ell)$ will denote the cube centered at $x_0$ and has side length $\ell$. In \cite{cruz1,cruz2}, Cruz-Uribe and P\'erez considered the problem of finding sufficient conditions on a pair of weights $(u,v)$ such that $\mathcal T$ satisfies the weak-type $(p,p)$ inequality \eqref{T} ($1<p<\infty$). They showed in \cite{cruz2} that if we strengthened the $A_p$ condition \eqref{two} by adding a ``power bump" to the left-hand term, then inequality \eqref{T} holds for all $f\in L^p_v(\mathbb R^n)$. More specifically, if there exists a number $r>1$ such that for every cube $Q$ in $\mathbb R^n$,
\begin{equation}\label{assump1.1}
\left(\frac{1}{|Q|}\int_Q u(x)^r\,dx\right)^{1/{(rp)}}\left(\frac{1}{|Q|}\int_Q v(x)^{-p'/p}\,dx\right)^{1/{p'}}\leq C<\infty,
\end{equation}
then the classical Calder¨®n--Zygmund operator $\mathcal T$ is bounded from $L^p_v(\mathbb R^n)$ into $WL^p_u(\mathbb R^n)$. Moreover, in \cite{cruz1}, the authors improved this result by replacing the ``power bump" in \eqref{assump1.1} by a smaller ``Orlicz bump". To be more precise, they introduced the following $A_p$-type condition in the scale of Orlicz spaces:
\begin{equation*}
\big\|u\big\|_{L(\log L)^{p-1+\delta},Q}^{1/{p}}\left(\frac{1}{|Q|}\int_Q v(x)^{-p'/p}\,dx\right)^{1/{p'}}\leq C<\infty,\qquad \delta>0,
\end{equation*}
where $\big\|u\big\|_{L(\log L)^{p-1+\delta},Q}$ is the mean Luxemburg norm of $u$ on cube $Q$ with Young function $\mathcal A(t)=t\cdot(1+\log^+t)^{p-1+\delta}$. It was shown that inequality \eqref{T} still holds under the $A_p$-type condition on $(u,v)$, and this result is sharp since it does not hold in general when $\delta=0$.

On the other hand, the following Sharp function estimate for $T_{\theta}$ was established in \cite{liu}: there exists some $\delta$, $0<\delta<1$, and a positive constant $C=C_{\delta}$ such that for any $f\in C^\infty_0(\mathbb R^n)$ and $x\in\mathbb R^n$,
\begin{equation}\label{MJ}
\big[M^{\sharp}(|T_{\theta}f|^{\delta})(x)\big]^{1/{\delta}}\leq C M f(x),
\end{equation}
where $M$ is the standard Hardy--Littlewood maximal operator and $M^{\sharp}$ is the well-known Sharp maximal operator defined as
\begin{equation*}
M^{\sharp}f(x):=\sup_{x\in Q}\frac{1}{|Q|}\int_Q\big|f(y)-f_Q\big|\,dy.
\end{equation*}
Here the supremum is taken over all the cubes containing $x$ and $f_Q$ denotes the mean value of $f$ over $Q$, namely, $f_Q=\frac{1}{|Q|}\int_Q f(x)\,dx$. It was pointed out in \cite{cruz2} (Remark 1.3) that by using this Sharp function estimate \eqref{MJ}, we can also show inequality \eqref{T} is true for more general operator $T_{\theta}$, under the condition \eqref{assump1.1} on $(u,v)$. Then we obtain a sufficient condition for $T_{\theta}$ to be weak $(p,p)$ with $1<p<\infty$.
\begin{theorem}\label{WT}
Let $1<p<\infty$. Given a pair of weights $(u,v)$, suppose that for some $r>1$ and for all cubes $Q$,
\begin{equation*}
\left(\frac{1}{|Q|}\int_Q u(x)^r\,dx\right)^{1/{(rp)}}\left(\frac{1}{|Q|}\int_Q v(x)^{-p'/p}\,dx\right)^{1/{p'}}\leq C<\infty.
\end{equation*}
Then the $\theta$-type Calder\'on--Zygmund operator $T_{\theta}$ satisfies the weak-type $(p,p)$ inequality
\begin{equation*}
u\big(\big\{x\in\mathbb R^n:\big|T_{\theta}f(x)\big|>\sigma\big\}\big)
\leq \frac{C}{\sigma^p}\int_{\mathbb R^n}\big|f(x)\big|^p v(x)\,dx,
\end{equation*}
where $C$ does not depend on $f$ and $\sigma>0$.
\end{theorem}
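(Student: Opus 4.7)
The plan is to adapt the argument of Cruz-Uribe and P\'erez from \cite{cruz2} directly to the $\theta$-type operator $T_{\theta}$. The key input that enables this adaptation is the sharp function estimate \eqref{MJ}, established by Liu and Xue \cite{liu} for $T_{\theta}$, which has exactly the same form as the one used in \cite{cruz2} for the classical Calder\'on--Zygmund operator $\mathcal{T}$. Thus, once this sharp function estimate is in hand, the entire two-weight argument goes through without essential change.

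First, I would fix $0<\delta<1$ so that \eqref{MJ} holds, namely
$$[M^{\sharp}(|T_{\theta}f|^{\delta})(x)]^{1/\delta}\leq C\cdot Mf(x).$$
The goal is to reduce the weak-type $(p,p)$ inequality for $T_{\theta}$ to the corresponding inequality for the Hardy--Littlewood maximal operator $M$. For $M$ itself, under the bumped hypothesis of the theorem, a classical result of P\'erez (which is at the heart of \cite{cruz2}) provides the two-weight strong-type, hence weak-type, $(p,p)$ estimate
$$\bigg(\int_{\mathbb R^n}(Mf)^p u\,dx\bigg)^{1/p}\leq C\bigg(\int_{\mathbb R^n}|f|^p v\,dx\bigg)^{1/p}.$$

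The actual transfer from $M$ back to $T_{\theta}$ proceeds via a Calder\'on--Zygmund decomposition of $|f|^p$ at a level proportional to $\sigma^p/\|f\|_{L^p(v)}^p$, producing a decomposition $f=g+b$ where $g$ is a bounded good part and $b=\sum_j b_j$ is a bad part whose pieces are mean-zero and supported on pairwise disjoint cubes $Q_j$. The contribution of $g$ is estimated by Chebyshev's inequality and the $L^2$-boundedness of $T_{\theta}$, then expanded via a generalized H\"older inequality to bring in the bumped condition on $(u,v)$. The contribution of $b$ on $(\bigcup_j 2Q_j)^c$ is controlled using the cancellation $\int b_j=0$ together with the $\theta$-regularity in condition $(ii)$ of Definition 1.1; the integral assumption \eqref{theta1} is precisely what guarantees the convergence of the resulting off-diagonal sum. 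On the exceptional set $\bigcup_j 2Q_j$, one uses the $u$-measure estimate $u(2Q_j)\lesssim \sigma^{-p}\int_{Q_j}|f|^p v\,dx$ obtained from the bumped condition via H\"older's inequality on each cube.

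The main obstacle is the absence of an $A_\infty$ assumption on $u$ in the two-weight setting. This rules out a direct application of the weak-form Fefferman--Stein inequality $\|g\|_{L^{p,\infty}(u)}\lesssim\|M^{\sharp}g\|_{L^{p,\infty}(u)}$, so one cannot pass directly from \eqref{MJ} to the desired conclusion by a one-line argument. Instead, the Calder\'on--Zygmund decomposition must be carried out by hand, and the power bump $r>1$ in the hypothesis is precisely what supplies the extra integrability (via H\"older's inequality applied cube by cube) that compensates for the missing $A_\infty$ property and allows the estimates for both the good and the bad parts to close.
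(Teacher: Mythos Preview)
Your proposal is essentially aligned with the paper's treatment, though the paper itself does not give a detailed proof: it simply invokes the sharp function estimate \eqref{MJ} from \cite{liu} and cites Remark~1.3 of \cite{cruz2} as the mechanism for transferring the Cruz-Uribe--P\'erez two-weight argument to $T_\theta$. Your outline fills in the Calder\'on--Zygmund decomposition that underlies \cite{cruz2}, and correctly identifies the role of the power bump $r>1$ in compensating for the absence of any $A_\infty$ assumption on $u$.

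One point worth flagging: your write-up is slightly inconsistent about the role of \eqref{MJ}. You open by saying the sharp function estimate is the ``key input,'' then observe (correctly) that without $u\in A_\infty$ one cannot simply invoke a weak-type Fefferman--Stein inequality to pass from $M^\sharp_\delta(T_\theta f)\lesssim Mf$ to the desired conclusion. The Calder\'on--Zygmund decomposition you then describe handles the bad part via the cancellation $\int b_j=0$ together with the $\theta$-regularity of the kernel (Definition~1.1(ii)) and condition \eqref{theta1}; this argument does not actually use \eqref{MJ} at all. That is fine---it is precisely how \cite{cruz2} proves the result for standard kernels, and the $\theta$-kernel regularity substitutes directly for the Lipschitz regularity there---but you should be aware that your concrete argument is the kernel-based route rather than the sharp-function route the paper nominally cites. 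Both lead to the same conclusion for $T_\theta$.
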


We want to extend Theorem \ref{WT} to the Morrey type spaces. In order to do so, we need to define Morrey type spaces associated to $\psi$ with two weights.
\begin{defn}
Let $1\leq p<\infty$, $0\leq\kappa<1$ and $\psi$ satisfy the $\mathcal D_\kappa$ condition $(\ref{D condition})$. For two weights $u$ and $v$, we denote by $\mathcal M^{p,\psi}(v,u)$ the generalized weighted Morrey space, the space of all locally integrable functions $f$ with finite norm.
\begin{equation*}
\mathcal M^{p,\psi}(v,u):=\Big\{f\in L^p_{loc}(v):\big\|f\big\|_{\mathcal M^{p,\psi}(v,u)}<\infty\Big\},
\end{equation*}
where the norm is given by
\begin{equation*}
\big\|f\big\|_{\mathcal M^{p,\psi}(v,u)}
:=\sup_Q\left(\frac{1}{\psi(u(Q))}\int_Q |f(x)|^pv(x)\,dx\right)^{1/p}.
\end{equation*}
\end{defn}
Note that
\begin{itemize}
  \item If $u=v=w$, then $\mathcal M^{p,\psi}(v,u)$ is the space $\mathcal M^{p,\psi}(w)$ in Definition \ref{WMorrey};
  \item If $\psi(x)=x^{\kappa}$ with $0<\kappa<1$, then $\mathcal M^{p,\psi}(v,u)$ is just the weighted Morrey space with two weights $\mathcal L^{p,\kappa}(v,u)$, which was introduced by Komori and Shirai in \cite{komori}.
\end{itemize}

We are now ready to prove the following result.

\begin{theorem}\label{mainthm:5}
Let $1<p<\infty$ and $u\in A_\infty$. Given a pair of weights $(u,v)$, suppose that for some $r>1$ and for all cubes $Q$,
\begin{equation*}
\left(\frac{1}{|Q|}\int_Q u(x)^r\,dx\right)^{1/{(rp)}}\left(\frac{1}{|Q|}\int_Q v(x)^{-p'/p}\,dx\right)^{1/{p'}}\leq C<\infty.
\end{equation*}
If $\psi$ satisfies the $\mathcal D_\kappa$ condition $(\ref{D condition})$ with $0\leq\kappa<1$, then the $\theta$-type Calder\'on--Zygmund operator $T_{\theta}$ is bounded from $\mathcal M^{p,\psi}(v,u)$ into $W\mathcal M^{p,\psi}(u)$.
\end{theorem}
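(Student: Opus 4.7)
The plan is to mirror the split-and-estimate scheme of the proof of Theorem~\ref{mainthm:2}, substituting Theorem~\ref{WT} for the one-weight weak type bound and using the bumped $A_p$ hypothesis in place of the ordinary $A_p$ condition whenever an average of $|f|$ over a distant annulus must be controlled. Concretely, fix a ball $B=B(x_0,r_B)\subset\mathbb R^n$ and $\sigma>0$, decompose $f=f_1+f_2$ with $f_1=f\cdot\chi_{2B}$, and write
\begin{equation*}
\frac{\sigma}{\psi(u(B))^{1/p}}\bigl[u(\{x\in B:|T_\theta f(x)|>\sigma\})\bigr]^{1/p}\leq K_1+K_2,
\end{equation*}
where $K_j$ is the same expression with $f$ replaced by $f_j$ and $\sigma$ by $\sigma/2$.

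For the local piece $K_1$, I would apply Theorem~\ref{WT} directly to $f_1$ to obtain
\begin{equation*}
K_1\leq\frac{C}{\psi(u(B))^{1/p}}\bigg(\int_{2B}|f(x)|^p v(x)\,dx\bigg)^{1/p}\leq C\|f\|_{\mathcal M^{p,\psi}(v,u)}\cdot\frac{\psi(u(2B))^{1/p}}{\psi(u(B))^{1/p}},
\end{equation*}
which is harmless because $u\in A_\infty$ satisfies the doubling property~(\ref{weights}) and $\psi$ satisfies the $\mathcal D_\kappa$ condition~(\ref{D condition}). For the far piece $K_2$, I would reuse the pointwise estimate~(\ref{pointwise1}) already established in the proof of Theorem~\ref{mainthm:1}, which gives
\begin{equation*}
|T_\theta f_2(x)|\leq C\sum_{j=1}^\infty\frac{1}{|2^{j+1}B|}\int_{2^{j+1}B}|f(y)|\,dy=:M,\qquad x\in B,
\end{equation*}
uniformly in $x\in B$. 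Since $M$ is independent of $x$, the level set inside $K_2$ is empty when $\sigma>2M$, and otherwise $\sigma\leq 2M$ and $u(\{\cdot\})\leq u(B)$; in either case $K_2\leq 2M\cdot u(B)^{1/p}/\psi(u(B))^{1/p}$.

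The only non-routine step, and the main expected obstacle, is controlling each annular average $|2^{j+1}B|^{-1}\int_{2^{j+1}B}|f|\,dy$ without the ordinary two-weight $A_p$ condition. I would apply H\"older's inequality with exponent $p$ to produce the factor $\bigl(\int_{2^{j+1}B}v^{-p'/p}\bigr)^{1/p'}$, and then use the elementary Jensen bound $(u(Q)/|Q|)^{1/p}\leq\bigl(|Q|^{-1}\int_Q u^r\bigr)^{1/(rp)}$ together with the bumped hypothesis to conclude
\begin{equation*}
\bigg(\int_{Q}v(y)^{-p'/p}\,dy\bigg)^{1/{p'}}\leq C\cdot\frac{|Q|}{u(Q)^{1/p}}\qquad\text{for every ball }Q.
\end{equation*}
This decoupling is precisely the substitute for the $A_p$ consequence exploited in the proof of Theorem~\ref{mainthm:1}; combined with the Morrey norm it yields $|2^{j+1}B|^{-1}\int_{2^{j+1}B}|f|\,dy\leq C\|f\|_{\mathcal M^{p,\psi}(v,u)}\cdot\psi(u(2^{j+1}B))^{1/p}/u(2^{j+1}B)^{1/p}$. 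Summing over $j\geq1$ and invoking (\ref{compare}) for $u\in A_\infty$ together with~(\ref{D condition}) exactly as in the geometric sum~(\ref{psi1}) gives $K_2\leq C\|f\|_{\mathcal M^{p,\psi}(v,u)}$. Taking the supremum over $\sigma>0$ and over all balls $B\subset\mathbb R^n$ completes the argument. Everything apart from this Jensen/bumped decoupling is a mechanical adaptation of the single-weight proofs.
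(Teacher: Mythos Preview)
Your proposal is correct and follows essentially the same approach as the paper: the decomposition $f=f_1+f_2$, Theorem~\ref{WT} for the local piece, the pointwise bound~(\ref{pointwise1}) plus H\"older and the bumped $A_p$ hypothesis for the far piece, and the $A_\infty$ geometric sum as in~(\ref{psi1}) to finish. The only cosmetic differences are that the paper works with cubes (matching how the hypothesis is stated) rather than balls, and it reaches the same decoupling inequality by first writing $u(2^{j+1}Q)\leq|2^{j+1}Q|^{1/r'}\bigl(\int_{2^{j+1}Q} u^r\bigr)^{1/r}$ and then invoking the bumped condition, which is algebraically equivalent to your Jensen step.
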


\begin{proof}[Proof of Theorem $\ref{mainthm:5}$]
Let $f\in\mathcal M^{p,\psi}(v,u)$ with $1<p<\infty$. For any cube $Q=Q(x_0,\ell)\subset\mathbb R^n$ and $\lambda>0$, we will denote by $\lambda Q$ the cube concentric with $Q$ whose each edge is $\lambda$ times as long, that is, $\lambda Q=Q(x_0,\lambda\ell)$. Let
\begin{equation*}
f=f\cdot\chi_{2Q}+f\cdot\chi_{(2Q)^c}:=f_1+f_2,
\end{equation*}
where $\chi_{2Q}$ denotes the characteristic function of $2Q=Q(x_0,2\ell)$. Then for any given $\sigma>0$, we write
\begin{equation*}
\begin{split}
&\frac{1}{\psi(u(Q))^{1/p}}\sigma\cdot
\Big[u\big(\big\{x\in Q:\big|T_{\theta}(f)(x)\big|>\sigma\big\}\big)\Big]^{1/p}\\
\leq &\frac{1}{\psi(u(Q))^{1/p}}\sigma\cdot
\Big[u\big(\big\{x\in Q:\big|T_{\theta}(f_1)(x)\big|>\sigma/2\big\}\big)\Big]^{1/p}\\
&+\frac{1}{\psi(u(Q))^{1/p}}\sigma\cdot
\Big[u\big(\big\{x\in Q:\big|T_{\theta}(f_2)(x)\big|>\sigma/2\big\}\big)\Big]^{1/p}\\
:=&K_1+K_2.
\end{split}
\end{equation*}
Using Theorem \ref{WT}, the $\mathcal D_\kappa$ condition (\ref{D condition}) of $\psi$ and inequality (\ref{weights})(consider cube $Q$ instead of ball $B$), we get
\begin{equation*}
\begin{split}
K_1&\leq C\cdot\frac{1}{\psi(u(Q))^{1/p}}\left(\int_{\mathbb R^n}\big|f_1(x)\big|^p v(x)\,dx\right)^{1/p}\\
&=C\cdot\frac{1}{\psi(u(Q))^{1/p}}\left(\int_{2Q}\big|f(x)\big|^p v(x)\,dx\right)^{1/p}\\
&\leq C\big\|f\big\|_{\mathcal M^{p,\psi}(v,u)}\cdot\frac{\psi(u(2Q))^{1/p}}{\psi(u(Q))^{1/p}}\\
&\leq C\big\|f\big\|_{\mathcal M^{p,\psi}(v,u)}\cdot\frac{u(2Q)^{\kappa/p}}{u(Q)^{\kappa/p}}\\
&\leq C\big\|f\big\|_{\mathcal M^{p,\psi}(v,u)}.
\end{split}
\end{equation*}
As for the term $K_2$, using the same methods and steps as we deal with $I_2$ in Theorem \ref{mainthm:1}, we can also obtain that for any $x\in Q$,
\begin{equation}\label{Ttheta}
\big|T_{\theta}(f_2)(x)\big|\leq C\sum_{j=1}^\infty\frac{1}{|2^{j+1}Q|}\int_{2^{j+1}Q}\big|f(y)\big|\,dy.
\end{equation}
This pointwise estimate together with Chebyshev's inequality implies
\begin{equation*}
\begin{split}
K_2&\leq\frac{2}{\psi(u(Q))^{1/p}}\cdot\left(\int_Q\big|T_{\theta}(f_2)(x)\big|^pu(x)\,dx\right)^{1/p}\\
&\leq C\cdot\frac{u(Q)^{1/p}}{\psi(u(Q))^{1/p}}
\sum_{j=1}^\infty\frac{1}{|2^{j+1}Q|}\int_{2^{j+1}Q}\big|f(y)\big|\,dy.
\end{split}
\end{equation*}
Moreover, an application of H\"older's inequality gives that
\begin{equation*}
\begin{split}
K_2&\leq C\cdot\frac{u(Q)^{1/p}}{\psi(u(Q))^{1/p}}
\sum_{j=1}^\infty\frac{1}{|2^{j+1}Q|}\left(\int_{2^{j+1}Q}\big|f(y)\big|^pv(y)\,dy\right)^{1/p}\\
&\times\left(\int_{2^{j+1}Q}v(y)^{-p'/p}\,dy\right)^{1/{p'}}\\
&\leq C\big\|f\big\|_{\mathcal M^{p,\psi}(v,u)}\cdot\frac{u(Q)^{1/p}}{\psi(u(Q))^{1/p}} \sum_{j=1}^\infty\frac{\psi(u(2^{j+1}Q))^{1/p}}{|2^{j+1}Q|}
\times\left(\int_{2^{j+1}Q}v(y)^{-p'/p}\,dy\right)^{1/{p'}}.
\end{split}
\end{equation*}
For any $j\in\mathbb Z^+$, since $0<u(Q)<u(2^{j+1}Q)<+\infty$ when $u$ is a weight function, then by the $\mathcal D_\kappa$ condition (\ref{D condition}) of $\psi$ with $0\leq\kappa<1$, we can see that
\begin{equation}\label{comparepsi}
\frac{\psi(u(2^{j+1}Q))^{1/p}}{\psi(u(Q))^{1/p}}\leq\frac{u(2^{j+1}Q)^{\kappa/p}}{u(Q)^{\kappa/p}}.
\end{equation}
In addition, we apply H\"older's inequality with exponent $r$ to get
\begin{equation}\label{U}
u\big(2^{j+1}Q\big)=\int_{2^{j+1}Q}u(y)\,dy
\leq\big|2^{j+1}Q\big|^{1/{r'}}\left(\int_{2^{j+1}Q}u(y)^r\,dy\right)^{1/r}.
\end{equation}
Hence, in view of \eqref{comparepsi} and \eqref{U} derived above, we have
\begin{equation*}
\begin{split}
K_2&\leq C\big\|f\big\|_{\mathcal M^{p,\psi}(v,u)}\sum_{j=1}^\infty
\frac{u(Q)^{{(1-\kappa)}/p}}{u(2^{j+1}Q)^{{(1-\kappa)}/p}}\cdot\frac{u(2^{j+1}Q)^{1/p}}{|2^{j+1}Q|}
\times\left(\int_{2^{j+1}Q}v(y)^{-p'/p}\,dy\right)^{1/{p'}}\\
&\leq C\big\|f\big\|_{\mathcal M^{p,\psi}(v,u)}\sum_{j=1}^\infty
\frac{u(Q)^{{(1-\kappa)}/p}}{u(2^{j+1}Q)^{{(1-\kappa)}/p}}\cdot\frac{|2^{j+1}Q|^{1/{(r'p)}}}{|2^{j+1}Q|}\\
&\times\left(\int_{2^{j+1}Q}u(y)^r\,dy\right)^{1/{(rp)}}\left(\int_{2^{j+1}Q}v(y)^{-p'/p}\,dy\right)^{1/{p'}}\\
&\leq C\big\|f\big\|_{\mathcal M^{p,\psi}(v,u)}\times\sum_{j=1}^\infty\frac{u(Q)^{{(1-\kappa)}/p}}{u(2^{j+1}Q)^{{(1-\kappa)}/p}}.
\end{split}
\end{equation*}
The last inequality is obtained by the condition \eqref{assump1.1} on $(u,v)$. Furthermore, by our additional hypothesis on $u:u\in A_\infty$ and inequality (\ref{compare})(consider cube $Q$ instead of ball $B$), we get
\begin{align}\label{5}
\sum_{j=1}^\infty\frac{u(Q)^{{(1-\kappa)}/p}}{u(2^{j+1}Q)^{{(1-\kappa)}/p}}
&\leq C\sum_{j=1}^\infty\left(\frac{|Q|}{|2^{j+1}Q|}\right)^{{\delta(1-\kappa)}/p}\notag\\
&\leq C\sum_{j=1}^\infty\left(\frac{1}{2^{(j+1)n}}\right)^{{\delta(1-\kappa)}/p}\notag\\
&\leq C,
\end{align}
which implies our desired estimate $K_2\leq C\big\|f\big\|_{\mathcal M^{p,\psi}(v,u)}$. Summing up the above estimates for $K_1$ and $K_2$, and then taking the supremum over all cubes $Q\subset\mathbb R^n$ and all $\sigma>0$, we finish the proof of Theorem \ref{mainthm:5}.
\end{proof}

Let $M$ denote the Hardy--Littlewood maximal operator and $M^{\sharp}$ denote the Sharp maximal operator. For $\delta>0$, we define
\begin{equation*}
M_{\delta}(f):=\big[M(|f|^{\delta})\big]^{1/{\delta}},\qquad  M^{\sharp}_{\delta}(f):=\big[M^{\sharp}(|f|^{\delta})\big]^{1/{\delta}}.
\end{equation*}
The maximal function associated to $\mathcal A(t)=t(1+\log^+t)$ is defined as
\begin{equation*}
M_{L\log L}f(x):=\sup_{x\in Q}\big\|f\big\|_{L\log L,Q},
\end{equation*}
where the supremum is taken over all the cubes containing $x$. Let $b\in BMO(\mathbb R^n)$ and $[b,T_{\theta}]$ be the commutator of the $\theta$-type Calder\'on--Zygmund operator. In \cite{liu}, it was proved that if $\theta$ satisfies condition $(\ref{theta2})$, then for $0<\delta<\varepsilon<1$, there exists a positive constant $C=C_{\delta,\varepsilon}$ such that for any $f\in C^\infty_0(\mathbb R^n)$ and $x\in\mathbb R^n$,
\begin{equation}\label{MJ2}
M^{\sharp}_\delta([b,T_{\theta}]f)(x)\leq C\|b\|_*\Big(M_{\varepsilon}(T_{\theta}f)(x)+M_{L\log L}f(x)\Big).
\end{equation}
Using this Sharp function estimate \eqref{MJ2} and following the idea of the proof in \cite{cruz2}, we can also establish the two-weight, weak-type norm inequality for $[b,T_{\theta}]$.
\begin{theorem}\label{WT2}
Let $1<p<\infty$ and $b\in BMO(\mathbb R^n)$. Given a pair of weights $(u,v)$, suppose that for some $r>1$ and for all cubes $Q$,
\begin{equation*}
\left(\frac{1}{|Q|}\int_Q u(x)^r\,dx\right)^{1/{(rp)}}\big\|v^{-1/p}\big\|_{\mathcal A,Q}\leq C<\infty,
\end{equation*}
where $\mathcal A(t)=t^{p'}(1+\log^+t)^{p'}$ is a Young function. If $\theta$ satisfies $(\ref{theta2})$, then the commutator operator $[b,T_{\theta}]$ satisfies the weak-type $(p,p)$ inequality
\begin{equation*}
u\big(\big\{x\in\mathbb R^n:\big|[b,T_{\theta}]f(x)\big|>\sigma\big\}\big)
\leq \frac{C}{\sigma^p}\int_{\mathbb R^n}\big|f(x)\big|^p v(x)\,dx,
\end{equation*}
where $C>0$ does not depend on $f$ and $\sigma>0$.
\end{theorem}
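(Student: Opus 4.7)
My plan is to follow the two-weight strategy of Cruz-Uribe and P\'erez in \cite{cruz2}, with the role of the standard Calder\'on--Zygmund kernel bounds replaced by the $\theta$-type bounds underlying the sharp function estimate \eqref{MJ2}. The starting point is
\begin{equation*}
M^{\sharp}_\delta\bigl([b,T_{\theta}]f\bigr)(x)\leq C\|b\|_*\bigl(M_{\varepsilon}(T_{\theta}f)(x)+M_{L\log L}f(x)\bigr),
\qquad 0<\delta<\varepsilon<1,
\end{equation*}
which reduces the problem to controlling the two maximal operators on the right-hand side in a two-weight weak-$L^p$ norm, together with a mechanism for transferring information from the sharp maximal function back to the commutator.

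First, I would observe that the hypothesis of the theorem is strictly stronger than the one in Theorem \ref{WT}. Since $\mathcal{A}(t)=t^{p'}(1+\log^+t)^{p'}\geq t^{p'}$, the generalized H\"older inequality gives
\begin{equation*}
\bigg(\frac{1}{|Q|}\int_Q v(x)^{-p'/p}\,dx\bigg)^{1/{p'}}\leq C\big\|v^{-1/p}\big\|_{\mathcal{A},Q},
\end{equation*}
so the $L\log L$--power bump condition of Theorem \ref{WT2} implies the power bump condition of Theorem \ref{WT}. Consequently, Theorem \ref{WT} yields the two-weight weak $(p,p)$ inequality for $T_{\theta}$ itself under the present hypothesis. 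Combining this with a two-weight Fefferman--Stein-type estimate adapted to the pair $(u,v)$, in which the power bump $r>1$ plays the role that $u\in A_\infty$ plays in the classical theory (cf.\ the dyadic decomposition argument in \cite{cruz1,cruz2}), the distribution of $[b,T_\theta]f$ with respect to $u$ is controlled by the distributions of $M_\varepsilon(T_\theta f)$ and $M_{L\log L}f$.

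Next, I would verify weak-type $(p,p)$ bounds for each of the two maximal operators. For $M_\varepsilon(T_\theta f)=[M(|T_\theta f|^\varepsilon)]^{1/\varepsilon}$ with $\varepsilon<1$, the required bound follows from the two-weight weak $(p,p)$ inequality for $T_\theta$ together with a Kolmogorov-type inequality on the cubes produced by the Calder\'on--Zygmund decomposition of $f$. For $M_{L\log L}f$, the choice of Young function $\mathcal{A}(t)=t^{p'}(1+\log^+t)^{p'}$ is, up to equivalence, the complementary Young function adapted to $L\log L$ at the $L^p$-scale; by P\'erez's two-weight theory for maximal operators associated to Young functions, this Orlicz bump together with the power bump on $u$ forces $M_{L\log L}$ to be bounded from $L^p_v$ into $WL^p_u$, which is exactly what is needed.

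The main technical obstacle will be the Fefferman--Stein-type transfer from $M^\sharp_\delta([b,T_\theta]f)$ back to $[b,T_\theta]f$ in the absence of any $A_\infty$-hypothesis on $u$: the power bump $r>1$ must substitute for the reverse H\"older property of $A_\infty$ weights, and the dyadic Calder\'on--Zygmund decomposition must be arranged so that the resulting error terms are summable against the extra integrability provided by the bump. Once this transfer is in place, the weak-type $(p,p)$ inequality of the theorem follows by assembling the bounds for $M_\varepsilon(T_\theta f)$ and $M_{L\log L}f$ described above.
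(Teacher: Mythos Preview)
The paper does not give a detailed proof of this theorem; it only remarks that the sharp function estimate \eqref{MJ2} together with ``the idea of the proof in \cite{cruz2}'' yield the result. Your proposal is precisely this: invoke \eqref{MJ2} to reduce to $M_\varepsilon(T_\theta f)$ and $M_{L\log L}f$, observe that the $\mathcal A$-bump condition dominates the power-bump condition of Theorem~\ref{WT} so that $T_\theta$ itself is under control, and then appeal to the Cruz-Uribe--P\'erez two-weight machinery (the substitute Fefferman--Stein inequality driven by the $r>1$ bump, and P\'erez's two-weight bound for $M_{L\log L}$ under the Orlicz bump $\mathcal A(t)=t^{p'}(1+\log^+t)^{p'}$). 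This is exactly the route the paper points to, so there is nothing to compare.
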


We will extend Theorem \ref{WT2} to the Morrey type spaces. In order to do so, we need the following key lemma.
\begin{lemma}\label{three}
Given three Young functions $\mathcal A$, $\mathcal B$ and $\mathcal C$ such that for all $t>0$,
\begin{equation*}
\mathcal A^{-1}(t)\cdot\mathcal B^{-1}(t)\leq\mathcal C^{-1}(t),
\end{equation*}
where $\mathcal A^{-1}(t)$ is the inverse function of $\mathcal A(t)$. Then we have the following generalized H\"older's inequality due to O'Neil \cite{neil}: for any cube $Q\subset\mathbb R^n$ and all functions $f$ and $g$,
\begin{equation*}
\big\|f\cdot g\big\|_{\mathcal C,Q}\leq 2\big\|f\big\|_{\mathcal A,Q}\big\|g\big\|_{\mathcal B,Q}.
\end{equation*}
\end{lemma}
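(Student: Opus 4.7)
The plan is to reduce the norm inequality to a pointwise Young-type bound derived directly from the hypothesis on the inverses, and then renormalize using the convexity of $\mathcal C$. This route is classical (the lemma is attributed by the authors to O'Neil \cite{neil}), so I do not anticipate a serious obstacle; the only place requiring any care is the final factor of $2$ in the stated inequality, which must be absorbed via convexity rather than dropping out automatically.

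\emph{Step 1 (pointwise Young-type inequality).} I would first establish
\begin{equation*}
\mathcal C(ab)\leq \mathcal A(a)+\mathcal B(b)\qquad\text{for all } a,b\geq 0.
\end{equation*}
Set $u=\mathcal A(a)$, $v=\mathcal B(b)$, and $w=u+v$. Since $\mathcal A^{-1}$ and $\mathcal B^{-1}$ are nondecreasing (as the underlying Young functions are strictly increasing), we have $a=\mathcal A^{-1}(u)\leq\mathcal A^{-1}(w)$ and $b=\mathcal B^{-1}(v)\leq\mathcal B^{-1}(w)$. Multiplying these two bounds and invoking the hypothesis gives $ab\leq\mathcal A^{-1}(w)\mathcal B^{-1}(w)\leq\mathcal C^{-1}(w)$; applying the increasing function $\mathcal C$ to both sides then yields $\mathcal C(ab)\leq w=\mathcal A(a)+\mathcal B(b)$, as claimed.

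\emph{Step 2 (renormalization and absorbing the factor $2$).} Assume $\lambda:=\|f\|_{\mathcal A,Q}$ and $\mu:=\|g\|_{\mathcal B,Q}$ are positive and finite, since the degenerate cases are immediate. By definition of the Luxemburg norm (and monotone convergence, which guarantees the infimum is attained), $\tfrac{1}{|Q|}\int_Q\mathcal A(|f|/\lambda)\,dx\leq 1$ and analogously for $g$ with $\mu$. Applying Step~1 pointwise with $a=|f(x)|/\lambda$ and $b=|g(x)|/\mu$, integrating over $Q$, and dividing by $|Q|$, I obtain
\begin{equation*}
\frac{1}{|Q|}\int_Q\mathcal C\!\left(\frac{|f(x)\,g(x)|}{\lambda\mu}\right)dx\leq 2.
\end{equation*}
Since $\mathcal C$ is convex with $\mathcal C(0)=0$, we have $\mathcal C(t/2)\leq\mathcal C(t)/2$; rescaling the argument by an additional factor of $2$ therefore gives
\begin{equation*}
\frac{1}{|Q|}\int_Q\mathcal C\!\left(\frac{|f(x)\,g(x)|}{2\lambda\mu}\right)dx\leq 1,
\end{equation*}
so by the definition of the Luxemburg norm, $\|f\,g\|_{\mathcal C,Q}\leq 2\lambda\mu=2\|f\|_{\mathcal A,Q}\|g\|_{\mathcal B,Q}$, which is the desired inequality.
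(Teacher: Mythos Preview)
Your proof is correct and supplies precisely the standard argument for this generalized H\"older inequality. The paper itself does not prove this lemma at all: it is stated as a known result attributed to O'Neil \cite{neil} and used as a black box in the proof of Theorem~\ref{mainthm:6}, so there is no in-paper proof to compare against. Your Step~1 (the pointwise Young-type bound $\mathcal C(ab)\le\mathcal A(a)+\mathcal B(b)$ via the inverse-function hypothesis) and Step~2 (renormalizing and using convexity of $\mathcal C$ to absorb the factor $2$) are exactly the classical derivation, and the handling of the constant $2$ is done correctly.
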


\begin{theorem}\label{mainthm:6}
Let $1<p<\infty$, $u\in A_\infty$ and $b\in BMO(\mathbb R^n)$. Given a pair of weights $(u,v)$, suppose that for some $r>1$ and for all cubes $Q$,
\begin{equation}\label{assump1.2}
\left(\frac{1}{|Q|}\int_Q u(x)^r\,dx\right)^{1/{(rp)}}\big\|v^{-1/p}\big\|_{\mathcal A,Q}\leq C<\infty,
\end{equation}
where $\mathcal A(t)=t^{p'}(1+\log^+t)^{p'}$. If $\theta$ satisfies $(\ref{theta2})$ and $\psi$ satisfies the $\mathcal D_\kappa$ condition $(\ref{D condition})$ with $0\leq\kappa<1$, then the commutator operator $[b,T_{\theta}]$ is bounded from $\mathcal M^{p,\psi}(v,u)$ into $W\mathcal M^{p,\psi}(u)$.
\end{theorem}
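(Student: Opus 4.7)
The plan is to follow the decomposition used for Theorem \ref{mainthm:5}, inserting the commutator argument from the proof of Theorem \ref{mainthm:3} and using the generalized H\"older inequality of Lemma \ref{three} to exploit the full strength of the Orlicz bump \eqref{assump1.2}. Fix a cube $Q=Q(x_0,\ell)$ and write $f=f_1+f_2$ with $f_1=f\chi_{2Q}$. By linearity the weighted weak-type expression splits as $K_1+K_2$ corresponding to $f_1,f_2$.

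For $K_1$ I would simply invoke Theorem \ref{WT2} to dominate it by $\psi(u(Q))^{-1/p}\bigl(\int_{2Q}|f|^p v\bigr)^{1/p}$ and then pass from $2Q$ to $Q$ using the $\mathcal D_\kappa$ condition and the doubling inequality (\ref{weights}), exactly as in $K_1$ of Theorem \ref{mainthm:5}. For $K_2$ the plan is to apply Chebyshev to reduce to an $L^p(u)$-expression and then use the pointwise splitting
\[
\bigl|[b,T_\theta](f_2)(x)\bigr|\le |b(x)-b_Q|\cdot|T_\theta f_2(x)|+\bigl|T_\theta([b_Q-b]f_2)(x)\bigr|
\]
together with the pointwise bounds \eqref{pointwise1} and \eqref{pointwise2}. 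This reduces matters to three sums over dyadic annuli $2^{j+1}Q$: a plain average of $|f|$, an average of $|b-b_{2^{j+1}Q}|\,|f|$, and an average of $|f|$ carrying the extra factor $|b_{2^{j+1}Q}-b_Q|\le C(j+1)\|b\|_*$ from Lemma \ref{BMO}(i). The prefactor coming from $|b(x)-b_Q|$ on $Q$ produces $\bigl(\int_Q|b-b_Q|^p u\bigr)^{1/p}\le C\|b\|_*\,u(Q)^{1/p}$, which is the $A_\infty$ version of Lemma \ref{BMO}(ii) (valid since $u\in A_\infty$, by John--Nirenberg).

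Each dyadic average should then be converted into $\|f\|_{\mathcal M^{p,\psi}(v,u)}\cdot\psi(u(2^{j+1}Q))^{1/p}/u(2^{j+1}Q)^{1/p}$. For the plain average this mirrors $K_2$ in Theorem \ref{mainthm:5}: write $|f|=|f v^{1/p}|\cdot v^{-1/p}$, apply H\"older, note that $\|v^{-1/p}\|_{L^{p'},2^{j+1}Q}\le\|v^{-1/p}\|_{\mathcal A,2^{j+1}Q}$ since $t^{p'}\le\mathcal A(t)$, and then use the bump \eqref{assump1.2} together with $u(2^{j+1}Q)\le|2^{j+1}Q|^{1/r'}\bigl(\int_{2^{j+1}Q}u^r\bigr)^{1/r}$; the exponents $1/(rp)+1/(r'p)=1/p$ collapse as in Theorem \ref{mainthm:5}. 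For the BMO-weighted average, I would write the integrand as $|b-b_{2^{j+1}Q}|\cdot|f v^{1/p}|\cdot v^{-1/p}$ and apply the three-factor form of Lemma \ref{three} (obtained by iteration) with Young functions $\mathcal B_1(t)=e^t-1$, $\mathcal B_2(t)=t^p$, and $\mathcal A(t)=t^{p'}(1+\log^+t)^{p'}$; the inverse relation $\mathcal B_1^{-1}(t)\,\mathcal B_2^{-1}(t)\,\mathcal A^{-1}(t)\le Ct$ holds because $\log(1+t)\cdot t^{1/p}\cdot t^{1/p'}/\log(1+t)=t$. The BMO factor is then $\le C\|b\|_*$ by the unweighted analogue of \eqref{Jensen}, and the remaining two factors are handled exactly as in the plain case.

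Combining the pieces, $K_2$ is bounded by
\[
C\|b\|_*\|f\|_{\mathcal M^{p,\psi}(v,u)}\sum_{j=1}^{\infty}(j+1)\cdot\frac{\psi(u(2^{j+1}Q))^{1/p}}{\psi(u(Q))^{1/p}}\cdot\frac{u(Q)^{1/p}}{u(2^{j+1}Q)^{1/p}}.
\]
Since $u\in A_\infty$, the $\mathcal D_\kappa$ condition with $0\le\kappa<1$ together with \eqref{compare} (applied to cubes) reduce this to a convergent series exactly as in \eqref{psi3}; taking suprema over $Q$ and $\sigma>0$ then finishes the proof. The main obstacle I anticipate is arranging the three-factor H\"older chain so that the $\mathcal A$-bump lands on $v^{-1/p}$ (where \eqref{assump1.2} is available), the exponential-$L$ norm absorbs the BMO oscillation, and the $L^p$ factor collects $\|f\|_{\mathcal M^{p,\psi}(v,u)}$; once this Young-function arithmetic is in place, the remainder of the proof is a direct adaptation of the arguments in Theorems \ref{mainthm:3} and \ref{mainthm:5}.
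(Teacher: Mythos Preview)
Your proposal is correct and follows essentially the same route as the paper's proof: the same $f_1/f_2$ decomposition, Theorem~\ref{WT2} for the local part, the same pointwise splitting of $[b,T_\theta](f_2)$, the $A_\infty$ version of Lemma~\ref{BMO}(ii) for the factor $\bigl(\int_Q|b-b_Q|^p u\bigr)^{1/p}$, and the observation that \eqref{assump1.2} implies \eqref{assump1.1} for the plain averages. The only cosmetic difference is that you package the key step for the term containing $|b-b_{2^{j+1}Q}|\,|f|$ as a single three-factor generalized H\"older inequality (iterated from Lemma~\ref{three}) with Young functions $e^t-1$, $t^p$, and $\mathcal A$, whereas the paper first applies ordinary H\"older to separate $|f|^p v$ from $|b-b_{2^{j+1}Q}|^{p'}v^{-p'/p}$ and then applies Lemma~\ref{three} with $\mathcal C(t)=t^{p'}$ to split $\|(b-b_{2^{j+1}Q})\,v^{-1/p}\|_{\mathcal C,2^{j+1}Q}$ into the $\exp L$ and $\mathcal A$ factors; these two arguments are equivalent.
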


\begin{proof}[Proof of Theorem $\ref{mainthm:6}$]
Let $f\in\mathcal M^{p,\psi}(v,u)$ with $1<p<\infty$. For an arbitrary cube $Q=Q(x_0,\ell)$ in $\mathbb R^n$, as before, we set
\begin{equation*}
f=f_1+f_2,\qquad f_1=f\cdot\chi_{2Q},\quad  f_2=f\cdot\chi_{(2Q)^c}.
\end{equation*}
Then for any given $\sigma>0$, we write
\begin{equation*}
\begin{split}
&\frac{1}{\psi(u(Q))^{1/p}}\sigma\cdot
\Big[u\big(\big\{x\in Q:\big|[b,T_{\theta}](f)(x)\big|>\sigma\big\}\big)\Big]^{1/p}\\
\leq &\frac{1}{\psi(u(Q))^{1/p}}\sigma\cdot
\Big[u\big(\big\{x\in Q:\big|[b,T_{\theta}](f_1)(x)\big|>\sigma/2\big\}\big)\Big]^{1/p}\\
&+\frac{1}{\psi(u(Q))^{1/p}}\sigma\cdot
\Big[u\big(\big\{x\in Q:\big|[b,T_{\theta}](f_2)(x)\big|>\sigma/2\big\}\big)\Big]^{1/p}\\
:=&K'_1+K'_2.
\end{split}
\end{equation*}
Using Theorem \ref{WT2}, the $\mathcal D_\kappa$ condition (\ref{D condition}) of $\psi$ and inequality (\ref{weights})(consider cube $Q$ instead of ball $B$), we get
\begin{equation*}
\begin{split}
K'_1&\leq C\cdot\frac{1}{\psi(u(Q))^{1/p}}\left(\int_{\mathbb R^n}\big|f_1(x)\big|^p v(x)\,dx\right)^{1/p}\\
&=C\cdot\frac{1}{\psi(u(Q))^{1/p}}\left(\int_{2Q}\big|f(x)\big|^p v(x)\,dx\right)^{1/p}\\
&\leq C\big\|f\big\|_{\mathcal M^{p,\psi}(v,u)}\cdot\frac{\psi(u(2Q))^{1/p}}{\psi(u(Q))^{1/p}}\\
&\leq C\big\|f\big\|_{\mathcal M^{p,\psi}(v,u)}\cdot\frac{u(2Q)^{\kappa/p}}{u(Q)^{\kappa/p}}\\
&\leq C\big\|f\big\|_{\mathcal M^{p,\psi}(v,u)}.
\end{split}
\end{equation*}
Next we estimate $K'_2$. For any $x\in Q$, from the definition of $[b,T_{\theta}]$, we can see that
\begin{equation*}
\begin{split}
\big|[b,T_{\theta}](f_2)(x)\big|
&\leq \big|b(x)-b_{Q}\big|\cdot\big|T_\theta(f_2)(x)\big|
+\Big|T_\theta\big([b_{Q}-b]f_2\big)(x)\Big|\\
&:=\xi(x)+\eta(x).
\end{split}
\end{equation*}
Thus we have
\begin{equation*}
\begin{split}
K'_2\leq&\frac{1}{\psi(u(Q))^{1/p}}\sigma\cdot\Big[u\big(\big\{x\in Q:\xi(x)>\sigma/4\big\}\big)\Big]^{1/p}\\
&+\frac{1}{\psi(u(Q))^{1/p}}\sigma\cdot\Big[u\big(\big\{x\in Q:\eta(x)>\sigma/4\big\}\big)\Big]^{1/p}\\
:=&K'_3+K'_4.
\end{split}
\end{equation*}
For the term $K'_3$, it follows from the pointwise estimate \eqref{Ttheta} mentioned above and Chebyshev's inequality that
\begin{equation*}
\begin{split}
K'_3&\leq\frac{4}{\psi(u(Q))^{1/p}}\cdot\left(\int_Q\big|\xi(x)\big|^pu(x)\,dx\right)^{1/p}\\
&\leq\frac{C}{\psi(u(Q))^{1/p}}\cdot\left(\int_Q\big|b(x)-b_{Q}\big|^pu(x)\,dx\right)^{1/p}
\times\bigg(\sum_{j=1}^\infty\frac{1}{|2^{j+1}Q|}\int_{2^{j+1}Q}\big|f(y)\big|\,dy\bigg)\\
&\leq C\cdot\frac{u(Q)^{1/p}}{\psi(u(Q))^{1/p}}
\sum_{j=1}^\infty\frac{1}{|2^{j+1}Q|}\int_{2^{j+1}Q}\big|f(y)\big|\,dy,
\end{split}
\end{equation*}
where in the last inequality we have used the fact that Lemma \ref{BMO}$(ii)$ still holds when $u$ is an $A_{\infty}$ weight with $B$ replaced by $Q$. Repeating the arguments in the proof of Theorem \ref{mainthm:5}, we can show that $K'_3\leq C\big\|f\big\|_{\mathcal M^{p,\psi}(v,u)}$. As for the term $K'_4$,
using the same methods and steps as we deal with $J_2$ in Theorem \ref{mainthm:3}, we can show the following pointwise estimate as well.
\begin{equation*}
\eta(x)=\Big|T_\theta\big([b_{Q}-b]f_2\big)(x)\Big|\leq C\sum_{j=1}^\infty\frac{1}{|2^{j+1}Q|}\int_{2^{j+1}Q}\big|b(y)-b_{Q}\big|\cdot\big|f(y)\big|\,dy.
\end{equation*}
This, together with Chebyshev's inequality yields
\begin{equation*}
\begin{split}
K'_4&\leq\frac{4}{\psi(u(Q))^{1/p}}\cdot\left(\int_Q\big|\eta(x)\big|^pu(x)\,dx\right)^{1/p}\\
&\leq C\cdot\frac{u(Q)^{1/p}}{\psi(u(Q))^{1/p}}\cdot
\sum_{j=1}^\infty\frac{1}{|2^{j+1}Q|}\int_{2^{j+1}Q}\big|b(y)-b_{Q}\big|\cdot\big|f(y)\big|\,dy\\
&\leq C\cdot\frac{u(Q)^{1/p}}{\psi(u(Q))^{1/p}}\cdot
\sum_{j=1}^\infty\frac{1}{|2^{j+1}Q|}\int_{2^{j+1}Q}\big|b(y)-b_{{2^{j+1}Q}}\big|\cdot\big|f(y)\big|\,dy\\
&+C\cdot\frac{u(Q)^{1/p}}{\psi(u(Q))^{1/p}}\cdot
\sum_{j=1}^\infty\frac{1}{|2^{j+1}Q|}\int_{2^{j+1}Q}\big|b_{{2^{j+1}Q}}-b_{Q}\big|\cdot\big|f(y)\big|\,dy\\
&:=K'_5+K'_6.
\end{split}
\end{equation*}
An application of H\"older's inequality leads to that
\begin{equation*}
\begin{split}
K'_5&\leq C\cdot\frac{u(Q)^{1/p}}{\psi(u(Q))^{1/p}}\cdot
\sum_{j=1}^\infty\frac{1}{|2^{j+1}Q|}\left(\int_{2^{j+1}Q}\big|f(y)\big|^pv(y)\,dy\right)^{1/p}\\
&\times\left(\int_{2^{j+1}Q}\big|b(y)-b_{{2^{j+1}Q}}\big|^{p'}v(y)^{-p'/p}\,dy\right)^{1/{p'}}\\
&\leq C\big\|f\big\|_{\mathcal M^{p,\psi}(v,u)}\cdot\frac{u(Q)^{1/p}}{\psi(u(Q))^{1/p}}\cdot \sum_{j=1}^\infty\frac{\psi(u(2^{j+1}Q))^{1/p}}{|2^{j+1}Q|}\\
&\times\big|2^{j+1}Q\big|^{1/{p'}}\Big\|(b-b_{{2^{j+1}Q}})\cdot v^{-1/p}\Big\|_{\mathcal C,2^{j+1}Q},
\end{split}
\end{equation*}
where $\mathcal C(t)=t^{p'}$ is a Young function. For $1<p<\infty$, we know the inverse function of $\mathcal C(t)$ is $\mathcal C^{-1}(t)=t^{1/{p'}}$. Observe that
\begin{equation*}
\begin{split}
\mathcal C^{-1}(t)&=t^{1/{p'}}\\
&=\frac{t^{1/{p'}}}{1+\log^+ t}\times\big(1+\log^+t\big)\\
&=\mathcal A^{-1}(t)\cdot\mathcal B^{-1}(t),
\end{split}
\end{equation*}
where
\begin{equation*}
\mathcal A(t)\approx t^{p'}(1+\log^+t)^{p'},\qquad \mbox{and}\qquad \mathcal B(t)\approx \exp(t).
\end{equation*}
Thus, by Lemma \ref{three} and the estimate \eqref{Jensen}(when $w\equiv1$), we have
\begin{equation*}
\begin{split}
\Big\|(b-b_{{2^{j+1}Q}})\cdot v^{-1/p}\Big\|_{\mathcal C,2^{j+1}Q}
&\leq C\Big\|b-b_{{2^{j+1}Q}}\Big\|_{\mathcal B,2^{j+1}Q}\cdot\Big\|v^{-1/p}\Big\|_{\mathcal A,2^{j+1}Q}\\
&\leq C\|b\|_*\cdot\Big\|v^{-1/p}\Big\|_{\mathcal A,2^{j+1}Q}.
\end{split}
\end{equation*}
Moreover, in view of \eqref{comparepsi} and \eqref{U}, we can deduce that
\begin{equation*}
\begin{split}
K'_5&\leq C\|b\|_*\big\|f\big\|_{\mathcal M^{p,\psi}(v,u)}
\sum_{j=1}^\infty\frac{u(2^{j+1}Q)^{\kappa/p}}{u(Q)^{\kappa/p}}\cdot\frac{u(Q)^{1/p}}{|2^{j+1}Q|^{1/p}}
\cdot\Big\|v^{-1/p}\Big\|_{\mathcal A,2^{j+1}Q}\\
&\leq C\|b\|_*\big\|f\big\|_{\mathcal M^{p,\psi}(v,u)}
\sum_{j=1}^\infty\frac{u(Q)^{{(1-\kappa)}/p}}{u(2^{j+1}Q)^{{(1-\kappa)}/p}}\\
&\times\left(\frac{1}{|2^{j+1}Q|}\int_{2^{j+1}Q}u(x)^r\,dx\right)^{1/{(rp)}}
\cdot\Big\|v^{-1/p}\Big\|_{\mathcal A,2^{j+1}Q}\\
&\leq C\big\|f\big\|_{\mathcal M^{p,\psi}(v,u)}
\sum_{j=1}^\infty\frac{u(Q)^{{(1-\kappa)}/p}}{u(2^{j+1}Q)^{{(1-\kappa)}/p}}\\
&\leq C\big\|f\big\|_{\mathcal M^{p,\psi}(v,u)}.
\end{split}
\end{equation*}
The last inequality is obtained by the condition \eqref{assump1.2} on $(u,v)$ and the estimate \eqref{5}.
It remains to estimate the last term $K'_6$. Applying Lemma \ref{BMO}$(i)$(use $Q$ instead of $B$) and H\"older's inequality, we get
\begin{equation*}
\begin{split}
K'_6&\leq C\cdot\frac{u(Q)^{1/p}}{\psi(u(Q))^{1/p}}
\sum_{j=1}^\infty\frac{(j+1)\|b\|_*}{|2^{j+1}Q|}\int_{2^{j+1}Q}\big|f(y)\big|\,dy\\
&\leq C\cdot\frac{u(Q)^{1/p}}{\psi(u(Q))^{1/p}}
\sum_{j=1}^\infty\frac{(j+1)\|b\|_*}{|2^{j+1}Q|}\left(\int_{2^{j+1}Q}\big|f(y)\big|^pv(y)\,dy\right)^{1/p}\\
&\times\left(\int_{2^{j+1}Q}v(y)^{-p'/p}\,dy\right)^{1/{p'}}\\
&\leq C\big\|f\big\|_{\mathcal M^{p,\psi}(v,u)}\cdot\frac{u(Q)^{1/p}}{\psi(u(Q))^{1/p}} \sum_{j=1}^\infty(j+1)\cdot\frac{\psi(u(2^{j+1}Q))^{1/p}}{|2^{j+1}Q|}\\
&\times\left(\int_{2^{j+1}Q}v(y)^{-p'/p}\,dy\right)^{1/{p'}}.
\end{split}
\end{equation*}
Let $\mathcal C(t)$, $\mathcal A(t)$ be the same as before. Obviously, $\mathcal C(t)\leq\mathcal A(t)$ for all $t>0$, then for any cube $Q\subset\mathbb R^n$, we have $\big\|f\big\|_{\mathcal C,Q}\leq\big\|f\big\|_{\mathcal A,Q}$ by definition, which implies that condition \eqref{assump1.2} is stronger that condition \eqref{assump1.1}. This fact together with \eqref{comparepsi} and \eqref{U} yield
\begin{equation*}
\begin{split}
K'_6&\leq C\big\|f\big\|_{\mathcal M^{p,\psi}(v,u)}\sum_{j=1}^\infty(j+1)\cdot
\frac{u(Q)^{{(1-\kappa)}/p}}{u(2^{j+1}Q)^{{(1-\kappa)}/p}}\cdot\frac{u(2^{j+1}Q)^{1/p}}{|2^{j+1}Q|}\\
&\times\left(\int_{2^{j+1}Q}v(y)^{-p'/p}\,dy\right)^{1/{p'}}\\
&\leq C\big\|f\big\|_{\mathcal M^{p,\psi}(v,u)}\sum_{j=1}^\infty(j+1)\cdot
\frac{u(Q)^{{(1-\kappa)}/p}}{u(2^{j+1}Q)^{{(1-\kappa)}/p}}\cdot\frac{|2^{j+1}Q|^{1/{(r'p)}}}{|2^{j+1}Q|}\\
&\times\left(\int_{2^{j+1}Q}u(y)^r\,dy\right)^{1/{(rp)}}\left(\int_{2^{j+1}Q}v(y)^{-p'/p}\,dy\right)^{1/{p'}}\\
&\leq C\big\|f\big\|_{\mathcal M^{p,\psi}(v,u)}
\sum_{j=1}^\infty(j+1)\cdot\frac{u(Q)^{{(1-\kappa)}/p}}{u(2^{j+1}Q)^{{(1-\kappa)}/p}}.
\end{split}
\end{equation*}
Moreover, by our additional hypothesis on $u:u\in A_\infty$ and inequality (\ref{compare})(use $Q$ instead of $B$), we finally obtain
\begin{equation*}
\begin{split}
\sum_{j=1}^\infty(j+1)\cdot\frac{u(Q)^{{(1-\kappa)}/p}}{u(2^{j+1}Q)^{{(1-\kappa)}/p}}
&\leq C\sum_{j=1}^\infty(j+1)\cdot\left(\frac{|Q|}{|2^{j+1}Q|}\right)^{{\delta(1-\kappa)}/p}\\
&\leq C\sum_{j=1}^\infty(j+1)\cdot\left(\frac{1}{2^{(j+1)n}}\right)^{{\delta(1-\kappa)}/p}\\
&\leq C,
\end{split}
\end{equation*}
which in turn gives that $K'_6\leq C\big\|f\big\|_{\mathcal M^{p,\psi}(v,u)}$. Summing up all the above estimates, and then taking the supremum over all cubes $Q\subset\mathbb R^n$ and all $\sigma>0$, we therefore conclude the proof of Theorem \ref{mainthm:6}.
\end{proof}

In particular, if we take $\psi(x)=x^\kappa$ with $0<\kappa<1$, then we immediately
get the following two-weight, weak type $(p,p)$ inequalities for $T_{\theta}$ and $[b,T_{\theta}]$ in the weighted Morrey spaces.
\begin{corollary}
Let $1<p<\infty$, $0<\kappa<1$ and $u\in A_\infty$. Given a pair of weights $(u,v)$, suppose that for some $r>1$ and for all cubes $Q$,
\begin{equation*}
\left(\frac{1}{|Q|}\int_Q u(x)^r\,dx\right)^{1/{(rp)}}\left(\frac{1}{|Q|}\int_Q v(x)^{-p'/p}\,dx\right)^{1/{p'}}
\leq C<\infty.
\end{equation*}
Then the $\theta$-type Calder\'on--Zygmund operator $T_{\theta}$ is bounded from $\mathcal L^{p,\kappa}(v,u)$ into $W\mathcal L^{p,\kappa}(u)$.
\end{corollary}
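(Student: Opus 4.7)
The plan is to derive this corollary as an immediate specialization of Theorem \ref{mainthm:5}. The key observation is that for the choice $\psi(x) = x^\kappa$ with $0 < \kappa < 1$, one has $\psi(\xi)/\xi^\kappa \equiv 1$, so the $\mathcal D_\kappa$ condition \eqref{D condition} is trivially satisfied with constant $C = 1$ (in fact with equality). Thus this $\psi$ falls under the hypotheses of Theorem \ref{mainthm:5}.

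Next I would unwind the definitions. With $\psi(x) = x^\kappa$, the norm
\begin{equation*}
\big\|f\big\|_{\mathcal M^{p,\psi}(v,u)} = \sup_Q \left(\frac{1}{\psi(u(Q))}\int_Q |f(x)|^p v(x)\,dx\right)^{1/p}
\end{equation*}
coincides exactly with the norm $\sup_Q\bigl(u(Q)^{-\kappa}\int_Q |f|^p v\,dx\bigr)^{1/p}$ defining the two-weight weighted Morrey space $\mathcal L^{p,\kappa}(v,u)$ of Komori--Shirai. The same identification shows $W\mathcal M^{p,\psi}(u) = W\mathcal L^{p,\kappa}(u)$ as normed spaces.

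Since $u \in A_\infty$ and the two-weight hypothesis
\begin{equation*}
\left(\frac{1}{|Q|}\int_Q u(x)^r\,dx\right)^{1/(rp)}\left(\frac{1}{|Q|}\int_Q v(x)^{-p'/p}\,dx\right)^{1/p'}\leq C
\end{equation*}
is precisely the assumption of Theorem \ref{mainthm:5}, that theorem yields the boundedness of $T_\theta$ from $\mathcal M^{p,\psi}(v,u)$ into $W\mathcal M^{p,\psi}(u)$. Translating through the identifications above gives the stated conclusion. There is no substantive obstacle here: the corollary is purely a restatement in the Komori--Shirai language, and all the genuine work (the decomposition $f = f_1 + f_2$, the application of Theorem \ref{WT} to $f_1$, the pointwise bound \eqref{Ttheta} together with H\"older's inequality and the geometric-series estimate \eqref{5} for $f_2$) has already been carried out in the proof of Theorem \ref{mainthm:5}.
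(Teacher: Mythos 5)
Your proposal is correct and coincides with the paper's own derivation: the corollary is obtained exactly by specializing Theorem \ref{mainthm:5} to $\psi(x)=x^{\kappa}$ with $0<\kappa<1$, noting that this $\psi$ is positive, increasing, and satisfies the $\mathcal D_\kappa$ condition with constant $1$, so that $\mathcal M^{p,\psi}(v,u)=\mathcal L^{p,\kappa}(v,u)$ and $W\mathcal M^{p,\psi}(u)=W\mathcal L^{p,\kappa}(u)$. Nothing further is needed.
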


\begin{corollary}
Let $1<p<\infty$, $0<\kappa<1$, $u\in A_\infty$ and $b\in BMO(\mathbb R^n)$. Given a pair of weights $(u,v)$, suppose that for some $r>1$ and for all cubes $Q$,
\begin{equation*}
\left(\frac{1}{|Q|}\int_Q u(x)^r\,dx\right)^{1/{(rp)}}\big\|v^{-1/p}\big\|_{\mathcal A,Q}\leq C<\infty,
\end{equation*}
where $\mathcal A(t)=t^{p'}(1+\log^+t)^{p'}$. If $\theta$ satisfies $(\ref{theta2})$, then the commutator operator $[b,T_{\theta}]$ is bounded from $\mathcal L^{p,\kappa}(v,u)$ into $W\mathcal L^{p,\kappa}(u)$.
\end{corollary}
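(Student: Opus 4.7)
The plan is to mimic the two-weight arguments used for $T_\theta$ in Theorem \ref{mainthm:5}, but upgrade each step to handle the extra logarithmic growth introduced by the commutator. Fix an arbitrary cube $Q=Q(x_0,\ell)$ and decompose $f=f_1+f_2$ with $f_1=f\chi_{2Q}$ and $f_2=f\chi_{(2Q)^c}$. By linearity, the left-hand side splits into a local piece $K'_1$ and a nonlocal piece $K'_2$. The local piece is dispatched immediately: Theorem \ref{WT2} furnishes a two-weight weak $(p,p)$ estimate for $[b,T_\theta]$ on $\mathbb R^n$, and then the $\mathcal D_\kappa$ condition together with the doubling inequality \eqref{weights} (applied to $u\in A_\infty$) absorbs the transition from $\psi(u(2Q))$ to $\psi(u(Q))$, giving $K'_1\le C\|f\|_{\mathcal M^{p,\psi}(v,u)}$ exactly as in the proofs of Theorems \ref{mainthm:3} and \ref{mainthm:5}.

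For the nonlocal piece $K'_2$, I would use the commutator identity
\begin{equation*}
\big|[b,T_\theta](f_2)(x)\big|\le\big|b(x)-b_Q\big|\cdot\big|T_\theta(f_2)(x)\big|+\big|T_\theta([b_Q-b]f_2)(x)\big|=:\xi(x)+\eta(x),
\end{equation*}
producing two subterms $K'_3$ and $K'_4$. For $K'_3$, the pointwise estimate \eqref{Ttheta} reduces matters to averages $|2^{j+1}Q|^{-1}\int_{2^{j+1}Q}|f|$, and Chebyshev combined with Lemma \ref{BMO}$(ii)$ (which holds for $A_\infty$ weights on cubes) produces the factor $u(Q)^{1/p}$. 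From here the analysis is identical to the $K_2$ estimate in Theorem \ref{mainthm:5}: H\"older's inequality against the two-weight hypothesis \eqref{assump1.2} (using only the weaker consequence \eqref{assump1.1}), the comparison $\psi(u(2^{j+1}Q))^{1/p}/\psi(u(Q))^{1/p}\le (u(2^{j+1}Q)/u(Q))^{\kappa/p}$, and \eqref{U} reduce the remaining sum to \eqref{5}, which converges because $u\in A_\infty$ and $0\le\kappa<1$.

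For $K'_4$, the pointwise bound from \eqref{pointwise2} (rewritten for cubes) applies, and then splitting $|b(y)-b_Q|\le|b(y)-b_{2^{j+1}Q}|+|b_{2^{j+1}Q}-b_Q|$ yields two further pieces $K'_5$ and $K'_6$. The main obstacle is $K'_5$, where the integrand carries both the oscillation of $b$ and the weight $v^{-1/p}$. Here I would invoke the O'Neil generalized H\"older inequality of Lemma \ref{three} with the triple $\mathcal C(t)=t^{p'}$, $\mathcal A(t)\approx t^{p'}(1+\log^+t)^{p'}$, and $\mathcal B(t)\approx\exp t$; the required inequality $\mathcal A^{-1}(t)\mathcal B^{-1}(t)\le\mathcal C^{-1}(t)$ is checked by the factorization $t^{1/p'}=\bigl(t^{1/p'}/(1+\log^+t)\bigr)\cdot(1+\log^+t)$ used in the proof of Theorem \ref{WT2}. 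Combined with \eqref{Jensen} (at $w\equiv1$) to bound $\|b-b_{2^{j+1}Q}\|_{\mathcal B,2^{j+1}Q}\le C\|b\|_*$ and with the key hypothesis \eqref{assump1.2}, this collapses the $j$-th term to the same geometric expression $u(Q)^{(1-\kappa)/p}/u(2^{j+1}Q)^{(1-\kappa)/p}$ treated in \eqref{5}.

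For $K'_6$ I use Lemma \ref{BMO}$(i)$ to replace $|b_{2^{j+1}Q}-b_Q|$ by $C(j+1)\|b\|_*$, apply H\"older and the two-weight hypothesis (noting $\mathcal C\le\mathcal A$ so condition \eqref{assump1.2} implies \eqref{assump1.1}), and obtain the same geometric sum but weighted by $(j+1)$; this still converges by $u\in A_\infty$ and inequality \eqref{compare} because the exponent $\delta(1-\kappa)/p$ is strictly positive, producing a decay $2^{-cn(j+1)}$ that dominates the linear factor $(j+1)$. Summing $K'_1,\dots,K'_6$ and taking the supremum over all cubes $Q$ and all $\sigma>0$ completes the argument. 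The technical heart of the proof is the $K'_5$ estimate: the pair $(\mathcal A,\mathcal B)$ must be chosen so that the logarithmic blow-up of $b-b_{2^{j+1}Q}$ is absorbed into the exponential bump $\mathcal B$ while the $\mathcal A$-bump on $v^{-1/p}$ is exactly the one assumed in \eqref{assump1.2}.
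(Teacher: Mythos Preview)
Your proposal is correct and follows essentially the same route as the paper. In the paper this corollary is obtained in one line by specializing Theorem~\ref{mainthm:6} to $\psi(x)=x^\kappa$; what you have written reproduces the paper's proof of Theorem~\ref{mainthm:6} step for step (the same $f_1/f_2$ decomposition, the same split $K'_1,\dots,K'_6$, the same use of Theorem~\ref{WT2} for the local part, Lemma~\ref{three} with the triple $(\mathcal A,\mathcal B,\mathcal C)$ for $K'_5$, and Lemma~\ref{BMO}$(i)$ plus the observation $\mathcal C\le\mathcal A$ for $K'_6$).
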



\begin{thebibliography}{99}

\bibitem{alvarez} J. Alvarez, R. J. Bagby, D. S. Kurtz and C. P\'erez, Weighted estimates for commutators of linear operators, Studia Math, \textbf{104}(1993), 195--209.
\bibitem{cruz1} D. Cruz-Uribe and C. P\'erez, Sharp two-weight, weak-type norm inequalities for singular integral operators, Math. Res. Lett., \textbf{6}(1999), 417--427.
\bibitem{cruz2} D. Cruz-Uribe and C. P\'erez, Two-weight, weak-type norm inequalities for fractional integrals, Calder\'on--Zygmund operators and commutators, Indiana Univ. Math. J., \textbf{49}(2000), 697--721.
\bibitem{duoand} J. Duoandikoetxea, Fourier Analysis, American Mathematical Society, Providence, Rhode Island, 2000.
\bibitem{garcia} J. Garcia-Cuerva and J. L. Rubio de Francia, Weighted Norm Inequalities and Related Topics, North-Holland, Amsterdam, 1985.
\bibitem{john} F. John and L. Nirenberg, On functions of bounded mean oscillation,Comm. Pure Appl. Math,
    \textbf{14}(1961), 415--426.
\bibitem{komori} Y. Komori and S. Shirai, Weighted Morrey spaces and a singular integral operator, Math. Nachr, \textbf{282}(2009), 219--231.
\bibitem{liu} Z. G. Liu and S. Z. Lu, Endpoint estimates for commutators of Calder\'on--Zygmund type operators, Kodai Math. J., \textbf{25}(2002), 79--88.
\bibitem{mizuhara} T. Mizuhara, Boundedness of some classical operators on generalized Morrey spaces, Harmonic Analysis, ICM-90 Satellite Conference Proceedings, Springer-Verlag, Tokyo, (1991), 183--189.
\bibitem{morrey} C. B. Morrey, On the solutions of quasi-linear elliptic partial differential equations, Trans. Amer. Math. Soc, \textbf{43}(1938), 126--166.
\bibitem{muckenhoupt} B. Muckenhoupt and R. L. Wheeden, Two weight function norm inequalities for the Hardy--Littlewood maximal function and the Hilbert transform, Studia Math, \textbf{55}(1976), 279--294.
\bibitem{neil} R. O'Neil, Fractional integration in Orlicz spaces. I, Trans. Amer. Math. Soc, \textbf{115}(1965), 300--328.
\bibitem{quek} T. S. Quek and D. C. Yang, Calder\'on--Zygmund-type operators on weighted weak Hardy spaces over $\mathbb R^n$, Acta Math. Sinica (Engl. Ser), \textbf{16}(2000), 141--160.
\bibitem{rao} M. M. Rao and Z. D. Ren, Theory of Orlicz Spaces, Marcel Dekker, New York, 1991.
\bibitem{stein2} E. M. Stein, Harmonic Analysis: Real-Variable Methods, Orthogonality, and Oscillatory Integrals, Princeton Univ. Press, Princeton, New Jersey, 1993.
\bibitem{wang} H. Wang, Intrinsic square functions on the weighted Morrey spaces, J. Math. Anal. Appl, \textbf{396}(2012), 302--314.
\bibitem{yabuta} K. Yabuta, Generalizations of Calder\'on--Zygmund operators, Studia Math, \textbf{82}(1985), 17--31.
\bibitem{zhang} P. Zhang, Weighted endpoint estimates for commutators of Marcinkiewicz integrals, Acta Math. Sinica (Engl. Ser), \textbf{26}(2010), 1709--1722.
\bibitem{zhang2} P. Zhang and H. Xu, Sharp weighted estimates for commutators of Calder\'on--Zygmund type operators, Acta Math. Sinica (Chin. Ser), \textbf{48}(2005), 625--636.

\end{thebibliography}
\end{document}